\documentclass[10pt, a4paper]{amsart}

\usepackage{framed}
\usepackage{amsmath, amssymb, amsfonts, amsthm}
\usepackage[margin=2.0cm]{geometry}
\usepackage[]{setspace}
\usepackage[usenames,dvipsnames]{color}
\usepackage[unicode]{hyperref}
\usepackage{comment}

\usepackage[all,matrix,arrow]{xy}
\usepackage{enumerate}
\usepackage{xcolor}
\usepackage[utf8]{inputenc}
\usepackage[T1]{fontenc}

\usepackage{pdflscape}
\usepackage{tikz}
\usetikzlibrary{fit,shapes,calc,arrows,through,intersections,arrows.meta,decorations.pathmorphing}
\usepackage{tkz-euclide}

\theoremstyle{plain}
\newtheorem{theorem}{Theorem}[section]
\newtheorem{proposition}[theorem]{Proposition}
\newtheorem{lemma}[theorem]{Lemma}
\newtheorem{corollary}[theorem]{Corollary}

\theoremstyle{definition}
\newtheorem{example}[theorem]{Example}
\newtheorem{definition}[theorem]{Definition}
\newtheorem{question}[theorem]{Question}

\theoremstyle{remark}
\newtheorem{remark}[theorem]{Remark}

\newcommand\bHen{\mathrm{H}^{e\prime}}
\newcommand\bHepi{\mathrm{H}^{e,\varpi}}
\newcommand\bHepiZ{\mathrm{H}^{e,\varpi,\mathbb{Z}}}
\newcommand\bHeZ{\mathrm{H}^{e,\mathbb{Z}}}
\newcommand\bHed{\mathrm{H}^{e,\mathrm{disc}}}

\renewcommand\int{\mathbf{int}}

\newcommand\Rfour{\hyperref[R4]{{\rm(R4)}}}

\renewcommand\Form{\mathrm{Form}}
\newcommand\Sent{\mathrm{Sent}}
\newcommand\FForm{\mathsf{Form}}

\numberwithin{equation}{section}
\makeatletter
\newcommand{\addresseshere}{%
  \enddoc@text\let\enddoc@text\relax
}
\makeatother

\newcommand{\new}[1]{#1}

\title{{Universal-existential theories of fields}}
\author{Sylvy Anscombe and Arno Fehm}
\address{Universit\'{e} Paris Cit\'{e} and Sorbonne Universit\'{e}, CNRS, IMJ-PRG, F-75013 Paris, France}
\email{sylvy.anscombe@imj-prg.fr}
\address{Institut f\"{u}r Algebra, Technische Universit\"{a}t Dresden, 01062 Dresden, Germany}
\email{arno.fehm@tu-dresden.de}

\begin{document}

~\vspace{-1.0cm}
\begin{center}
{\color{blue}\small Published in {\em Proceedings of the Edinburgh Mathematical Society}, 
\url{https://doi.org/10.1017/S0013091525100771} \\ 
This updated version contains a simplified proof of Proposition \ref{prop:Tred_ff_2}.}
\end{center}
\vspace{0.5cm}

\maketitle

\begin{center}
{\em Dedicated to the memory of Alexander Prestel (1941--2024)}
\end{center}

\begin{abstract}
We study universal-existential fragments of first-order theories of fields, in particular of function fields and of equicharacteristic henselian valued fields.
For example we discuss to what extent the theory of a field $k$ determines the universal-existential theories of the rational function field over $k$ and of the field of Laurent series over $k$,
and we find various many-one reductions between such fragments.
\end{abstract}

\section{Introduction}

\noindent
\new{The main application
of the celebrated paper \cite{AK3} by Ax and Kochen,
which initiated the study of the model theory of henselian valued fields,
is the decidability of the theory of the Laurent series fields $\mathbb{R}(\!(t)\!)$ and $\mathbb{C}(\!(t)\!)$,
answering a question by R.~Robinson.}
What they prove is  a bit more precisely the following:

\begin{theorem}[Ax--Kochen 1966]
Let $k,l$ be fields of characteristic zero.
If ${\rm Th}(k)={\rm Th}(l)$, then 
${\rm Th}(k(\!(t)\!),v_t,t)={\rm Th}(l(\!(t)\!),v_t,t)$, and if ${\rm Th}(k)$ is decidable,
then so is ${\rm Th}(k(\!(t)\!),v_t,t)$.
\end{theorem}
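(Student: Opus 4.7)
The plan is to apply the Ax--Kochen--Ershov (AKE) transfer principle for henselian valued fields of residue characteristic zero. In its standard form, this asserts that if $(K_1,v_1)$ and $(K_2,v_2)$ are two such valued fields with elementarily equivalent residue fields (as rings) and elementarily equivalent value groups (as ordered abelian groups), then $(K_1,v_1)$ and $(K_2,v_2)$ are themselves elementarily equivalent as valued fields. Moreover, the back-and-forth proof yields an effective translation of sentences, so that decidability of the residue field theory and of the value group theory transfers to decidability of the valued field theory.

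The Laurent series fields $(k(\!(t)\!), v_t)$ and $(l(\!(t)\!), v_t)$ are complete, hence henselian, valued fields of equicharacteristic zero, with residue fields $k$ and $l$ respectively and both value groups equal to $(\mathbb{Z}, +, 0, 1, <)$. Given $\mathrm{Th}(k)=\mathrm{Th}(l)$, AKE immediately gives $\mathrm{Th}(k(\!(t)\!), v_t) = \mathrm{Th}(l(\!(t)\!), v_t)$. To incorporate the distinguished constant $t$, I observe that in each field $t$ is a uniformizer, $v_t(t)=1$. A convenient way to handle this is to run the back-and-forth proof of AKE in sufficiently saturated elementary extensions, initializing the zig-zag with the partial map $t \mapsto t$; since the chosen valuations agree and the residues are both $0$, the remainder of the construction proceeds without modification, yielding $\mathrm{Th}(k(\!(t)\!), v_t, t) = \mathrm{Th}(l(\!(t)\!), v_t, t)$.

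For the decidability transfer, I combine the effective AKE reduction with Presburger's theorem, which gives decidability of the theory of $(\mathbb{Z}, +, 0, 1, <)$. Consequently, decidability of $\mathrm{Th}(k)$ alone suffices for decidability of $\mathrm{Th}(k(\!(t)\!), v_t, t)$. The main difficulty is the AKE principle itself: the back-and-forth requires a delicate use of Hensel's lemma to extend partial valued field isomorphisms through immediate extensions inside saturated models, and the effective version requires tracking quantifier complexity of the translated sentences with care. Once these tools are available, however, both conclusions of the theorem follow directly.
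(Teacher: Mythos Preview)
The paper does not give its own proof of this theorem: it is stated in the introduction as the classical result of Ax and Kochen \cite{AK3}, serving as historical motivation, and no argument is supplied. The closest the paper comes is Proposition~\ref{prop:Ftt}(c), whose proof simply says that it ``is an immediate consequence of the classical Ax--Kochen/Ershov principle for equicharacteristic zero henselian valued fields'' and that the computable map can be extracted from \cite[Corollary 2.23]{AF23}. So there is nothing to compare your attempt against beyond the bare citation of AKE.

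Your sketch is the standard route and is essentially correct. Two small comments. First, your handling of the constant $t$ by ``initializing the back-and-forth with $t\mapsto t$'' is fine, but the paper's own way of absorbing the constant (see the proof of Proposition~\ref{lem:AE_3}(b)) is slightly cleaner: one works in the language of ordered abelian groups with a constant for the least positive element, where the theory of $\mathbb{Z}$-groups is model complete, so the embedding $\mathbb{Z}\hookrightarrow wL$ sending $1$ to $v(\varpi)$ is elementary and the constant comes for free from the AKE transfer. Second, for the decidability claim you rightly invoke Presburger for the value group; the paper packages the effective reduction as a single computable map $\tau$ with ${\rm Th}(K,v)=\tau^{-1}({\rm Th}(Kv))$, which is exactly the content of your ``effective AKE reduction''.
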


Here, we denote by ${\rm Th}(k(\!(t)\!),v_t,t)$ the theory of $k(\!(t)\!)$ with the $t$-adic valuation in some language of valued fields together with a constant symbol for the element $t$.
In spite of considerable effort, no full analogue of this has been obtained for fields of positive characteristic. Notably, the decidability of the local fields $\mathbb{F}_q(\!(t)\!)$ is a long-standing open problem,
despite for example \cite{Kuh01,Onay,Kartas}. 
However, by now we have at least results in this direction for certain fragments of the theory, most notably the existential theory ${\rm Th}_\exists$,
see for example \cite{DS03,AF16,Kartas_tame,ADF23}.
Some of these results are conditional on  consequences of resolution of singularities in positive characteristic, like the assumption \Rfour\ used below, which was introduced in \cite{ADF23}.

\begin{theorem}[\cite{AF16,ADF23}]
Let $k,l$ be fields.
\begin{enumerate}[$(a)$]
 \item If ${\rm Th}_\exists(k)\subseteq{\rm Th}_\exists(l)$, then
${\rm Th}_\exists(k(\!(t)\!),v_t)\subseteq{\rm Th}_\exists(l(\!(t)\!),v_t)$,
and if ${\rm Th}_\exists(k)$ is decidable, then so is ${\rm Th}_\exists(k(\!(t)\!),v_t)$.
\item Suppose \Rfour.
If ${\rm Th}_\exists(k)\subseteq{\rm Th}_\exists(l)$, then
${\rm Th}_\exists(k(\!(t)\!),v_t,t)\subseteq{\rm Th}_\exists(l(\!(t)\!),v_t,t)$,
and if ${\rm Th}_\exists(k)$ is decidable, then so is ${\rm Th}_\exists(k(\!(t)\!),v_t,t)$.
\end{enumerate}
\end{theorem}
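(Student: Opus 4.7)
The plan is to reduce both parts to an embedding problem and construct the embedding stage by stage, using the hypothesis on residue fields to deal with $k$ and the henselian structure of the target to accommodate the valuation and the power series. By standard compactness, the inclusion $\mathrm{Th}_\exists(k(\!(t)\!),v_t)\subseteq\mathrm{Th}_\exists(l(\!(t)\!),v_t)$ is equivalent to the existence of a valued-field embedding $(k(\!(t)\!),v_t)\hookrightarrow (L,v)$ into some $\aleph_1$-saturated elementary extension $(L,v)\succeq (l(\!(t)\!),v_t)$, and analogously for part (b) with the constant $t$ sent to $t$.

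For part (a), fix such an $(L,v)$ with residue field $\bar L\succeq l$ and value group $\Gamma\succeq\mathbb{Z}$. By $\mathrm{Th}_\exists(k)\subseteq\mathrm{Th}_\exists(l)$ and saturation, there is an embedding $k\hookrightarrow\bar L$, and since $(L,v)$ is henselian and equicharacteristic, a coefficient field of $L$ lifts this to an embedding $k\hookrightarrow L$ on which $v$ is trivial. Pick any $\pi\in L$ with $v(\pi)>0$ and extend to $k(t)\hookrightarrow L$ by $t\mapsto\pi$, then via the universal property of henselization to $k(t)^h\hookrightarrow L$. To continue to $k(\!(t)\!)$, one realizes, for each $f=\sum_{i\ge 0}a_it^i\in k[[t]]$, the type
\[
\Bigl\{\,v\Bigl(x-\sum_{i=0}^n a_i\pi^i\Bigr)\ge(n+1)v(\pi):n\in\mathbb{N}\,\Bigr\},
\]
finitely satisfiable by the partial sums and hence realized in $L$ by saturation; iterating over an enumeration of $k(\!(t)\!)$ and verifying compatibility with the field operations at each step yields the desired embedding. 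This is the strategy of \cite{AF16} and uses essentially that the existential theory of an equicharacteristic henselian valued field is controlled by that of its residue field.

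The main obstacle is part (b): the constant $t$ rigidifies the construction by pinning down the image of the uniformizer, and the freedom to choose $\pi$ exploited above is no longer available. The hypothesis $\mathrm{Th}_\exists(k)\subseteq\mathrm{Th}_\exists(l)$ alone does not obviously carry out the power-series extension step while respecting the constant, since realizations of the type above are not unique (they differ by elements of $\bigcap_n \pi^n\mathcal O_L$, which are nonzero in the saturated model). This is exactly where the principle \Rfour\ intervenes: it supplies the approximation input (a consequence of resolution of singularities in positive characteristic) needed to show that the henselization $(k(t)^h,v_t,t)$ is existentially closed in $(k(\!(t)\!),v_t,t)$, and symmetrically over $l$. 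Once this reduction is in place, the transfer is handled at the henselization level, where the embedding is constructed by lifting algebraic data and appealing to the universal property of the henselization, with the residue-field input supplied by the hypothesis. Finally, decidability transfer in both parts is routine: a decision procedure for $\mathrm{Th}_\exists(l)$ combined with the established inclusion and recursive enumerability of existential validity yields a decision procedure for $\mathrm{Th}_\exists(l(\!(t)\!),v_t)$ (and its variant with the constant).
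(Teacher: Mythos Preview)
Your outline for part~(b) is essentially right, and matches what the paper uses elsewhere: under \Rfour\ one has $(k(t)^h,v_t,t)\preccurlyeq_\exists(k(\!(t)\!),v_t,t)$ (this is \cite[Corollary~4.14]{ADF23}, not just a vague ``approximation input''), and the transfer at the level of henselizations is then straightforward.

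There is, however, a genuine gap in your part~(a). Realizing the type
\[
\Bigl\{\,v\Bigl(x-\sum_{i=0}^n a_i\pi^i\Bigr)\ge(n+1)v(\pi):n\in\mathbb{N}\,\Bigr\}
\]
for each $f\in k[\![t]\!]$ separately does not give you a ring homomorphism: distinct realizations differ by elements of $\bigcap_n\mathfrak m^n\neq 0$, and ``verifying compatibility with the field operations at each step'' is exactly the hard part, which you have not done. (If you try to make this precise, you will find yourself needing the type of $f_\alpha$ over the subring generated by all previously chosen images, and consistency of that richer type is not obvious.) The good news is that this step is completely unnecessary. Once you have embedded $(k(t)^h,v_t)$ into $(L,v)$, you are done: by Ershov's result $(k(t)^h,v_t)\preccurlyeq_\exists(k(\!(t)\!),v_t)$ unconditionally (see e.g.\ \cite[Lemma~4.5]{AF16}, also quoted in Remark~\ref{rem:AE}), so $\mathrm{Th}_\exists(k(\!(t)\!),v_t)=\mathrm{Th}_\exists(k(t)^h,v_t)\subseteq\mathrm{Th}_\exists(L,v)$. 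Your remark that $k(t)^h\preccurlyeq_\exists k(\!(t)\!)$ is needed under \Rfour\ for part~(b) with the constant $t$ is correct, but for part~(a) it holds without any hypothesis and should be used there too.

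Your decidability argument is also too thin. ``The established inclusion and recursive enumerability of existential validity'' does not by itself give a decision procedure; you need both semidecidability of membership and of non-membership. The honest argument is: the transfer shows that for $(K,v)\models\bHen$, the set $\mathrm{Th}_\exists(K,v)$ depends only on $\mathrm{Th}_\exists(Kv)$, so $\varphi\in\mathrm{Th}_\exists(k(\!(t)\!),v_t)$ iff $\bHen\cup\{\psi^{\mathrm{res}}:\psi\in\mathrm{Th}_\exists(k)\}\vdash\varphi$, and dually for $\neg\varphi$ using $\mathrm{Th}_\forall(k)$; with $\mathrm{Th}_\exists(k)$ decidable both sides are recursively enumerable. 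The paper packages this differently: it quotes Proposition~\ref{prop:Ftt}, which gives a single computable map $\tau$ with $\mathrm{Th}_\exists(K,v)=\tau^{-1}(\mathrm{Th}_\exists(Kv))$ (and similarly with the constant under \Rfour), from which both the inclusion transfer and decidability follow in one line.
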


The situation is somewhat comparable for rational function fields $k(t)$.
It is almost trivial to see that ${\rm Th}_\exists(k)\subseteq{\rm Th}_\exists(l)$
implies 
${\rm Th}_\exists(k(t))\subseteq{\rm Th}_\exists(l(t))$,
but both
the full theory ${\rm Th}(k(t))$
and the 
decidability of this existential theory
remain in general mysterious.
For example,
it is known that ${\rm Th}(k)={\rm Th}(l)$ does {\em not} imply ${\rm Th}(k(t))={\rm Th}(l(t))$,
and the decidability of ${\rm Th}(\mathbb{C}(t))$ is another long-standing open problem.
Also, while
it was shown that ${\rm Th}_\exists(\mathbb{F}_q(t),t)$ (the existential theory of $\mathbb{F}_q(t)$ in the language of rings together with a constant symbol for the element $t$)
is undecidable \cite{Pheidas},
the decidability of
${\rm Th}_\exists(\mathbb{F}_q(t))$
is open.

The aim of this work is to similarly study universal-existential theories of fields.
We push some of the results from existential to certain kinds of universal-existential theories,
and we study the relation between existential theories with parameters and universal-existential theories without parameters.
Besides the usual universal-existential theory
${\rm Th}_{\forall\exists}$ we are also interested in the
${\rm Th}_{\forall_n\exists}$-theories, 
by which we mean
sentences
of the form $\forall x_1,\dots,x_n \psi$ with $\psi$ existential.
As a variant in the Laurent series case $k(\!(t)\!)$ we also study those 
universal-existential sentences 
where the universal quantifiers run only over $k$, which we denote by
${\rm Th}_{\forall^{\mathbf{k}}\exists}$.
(Fully precise definitions and a more systematic treatment of such ``fragments'' can be found in Section \ref{sec:fragments}. There are subtleties which for this introduction we can ignore.)

We now explain some applications of our (more general) results to rational function fields and to Laurent series fields.
For rational function fields, in Section \ref{sec:ff} we prove in particular the following:

\begin{theorem}\label{thm:intro_functionfield}
Let $k,l$ be fields.    
\begin{enumerate}[$(a)$] 
\item If ${\rm Th}_\exists(k)\subseteq{\rm Th}_\exists(l)$,
then ${\rm Th}_\exists(k(t),t)\subseteq{\rm Th}_\exists(l(t),t)$.
\item If ${\rm Th}_{\forall_1\exists}(k)\subseteq{\rm Th}_{\forall_1\exists}(l)$, then
${\rm Th}_{\forall_1\exists}(k(t))\subseteq{\rm Th}_{\forall_1\exists}(l(t))$.
\item If ${\rm Th}_\exists(k(t),t)$
is decidable and
${\rm Th}_{\forall_1\exists}(k)$
is decidable, then
${\rm Th}_{\forall_1\exists}(k(t))$
is decidable.
\item If ${\rm Th}_{\forall_1\exists}(k(t))$ is decidable
and $k$ is existentially $\emptyset$-definable in $k(t)$,
then ${\rm Th}_\exists(k(t),t)$ is decidable.
\item Suppose $k,l$ are perfect of positive characteristic.
If
${\rm Th}_{\forall_1\exists}(k(t),t)\subseteq{\rm Th}_{\forall_1\exists}(l(t),t)$, then
${\rm Th}_{\forall\exists}(k(t),t)\subseteq{\rm Th}_{\forall\exists}(l(t),t)$,
and if ${\rm Th}_{\forall_1\exists}(k(t),t)$ is decidable, then so is
${\rm Th}_{\forall\exists}(k(t),t)$.
If
${\rm Th}_{\forall_2\exists}(k(t))\subseteq{\rm Th}_{\forall_2\exists}(l(t))$, then
${\rm Th}_{\forall\exists}(k(t))\subseteq{\rm Th}_{\forall\exists}(l(t))$,
and if ${\rm Th}_{\forall_2\exists}(k(t))$ is decidable, then so is
${\rm Th}_{\forall\exists}(k(t))$.
\end{enumerate}
\end{theorem}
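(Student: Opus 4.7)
The plan is to prove the five parts in sequence, each building on the previous and on a parametric strengthening of \textit{(a)}.

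For \textit{(a)}, I use the standard coefficient reduction. An existential sentence $\exists\bar y\,\psi(\bar y,t)$ over $(k(t),t)$ asks for rational functions $y_i=f_i(t)/g_i(t)$ with $f_i,g_i\in k[t]$; for each degree bound $N$, clearing denominators and equating coefficients of powers of $t$ converts the identity into an existential sentence $\chi_N$ in the language of rings over $k$, so the original sentence holds in $(k(t),t)$ iff $\bigvee_N\chi_N$ holds in $k$. Each $\chi_N$ transfers from $k$ to $l$ by hypothesis, and the same argument works with parameters, yielding: any embedding $(k,\bar c)\hookrightarrow(l^*,\bar c')$ lifts over $t\mapsto t$ to $(k(t),t,\bar c)\hookrightarrow(l^*(t),t,\bar c')$. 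This parametric version is the technical workhorse for the rest.

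For \textit{(b)}, take $\phi=\forall x\exists\bar y\,\psi(x,\bar y)\in\mathrm{Th}(k(t))$ and $a\in l(t)$, and split by whether $a\in l$ (using that $l$ is relatively algebraically closed in $l(t)$). If $a\notin l$, then $a$ is transcendental over $l$; in an $\aleph_1$-saturated $l(t)^*\succ l(t)$, using $\mathrm{Th}_\exists(k)\subseteq\mathrm{Th}_\exists(l(t)^*)$ and saturation, I find an embedding $\iota\colon k\hookrightarrow l(t)^*$ with $a$ transcendental over $\iota(k)$ and extend by $\iota(t)=a$ to $(k(t),t)\hookrightarrow(l(t)^*,a)$. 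Since $\phi$ gives $\exists\bar y\,\psi(t,\bar y)$ in $k(t)$, embedding forward yields $\exists\bar y\,\psi(a,\bar y)$ in $l(t)^*$, hence in $l(t)$. If $a\in l$, I invoke the dual characterization of $\mathrm{Th}_{\forall_1\exists}(k)\subseteq\mathrm{Th}_{\forall_1\exists}(l)$: there exist $k^{**}\succ k$, $l^*\succ l$, and $a'\in k^{**}$ with $(k^{**},a')\hookrightarrow(l^*,a)$. I extend over $t\mapsto t$ and compose with $l^*(t)\hookrightarrow l(t)^{**}$ over $l$ and $t$ (from parametric \textit{(a)} applied to $l\preceq l^*$) and with $k^{**}(t)\hookrightarrow k(t)^{**}$ (similarly), then amalgamate in a sufficiently saturated elementary extension to obtain $(k(t)^{**},b')\hookrightarrow(l(t)^{**},a)$ where $b'$ is the image of $a'$ in $k(t)^{**}\succ k(t)$; now $\phi\in\mathrm{Th}(k(t)^{**})$ yields $\exists\bar y\,\psi(b',\bar y)$, which transfers to $a$ in $l(t)$. \emph{The main obstacle is this second case}, since the dual characterization produces $a'$ only in an elementary extension of $k$, forcing the amalgamation step.

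Parts \textit{(c)} and \textit{(d)} are decidability consequences of \textit{(b)}. For \textit{(c)}, the proof of \textit{(b)} yields the characterization ``$\phi\in\mathrm{Th}(k(t))$ iff (I) $\forall a\in k,\exists\bar y\in k(t),\psi(a,\bar y)$ and (II) $\exists\bar y\in k(t),\psi(t,\bar y)$''; (II) is decidable by the assumption on $\mathrm{Th}_\exists(k(t),t)$, while (I), via parametric \textit{(a)} and a compactness argument uniformizing the degree bound $N$, reduces to the decidable condition of a $\forall_1\exists$ sentence lying in $\mathrm{Th}(k)$. For \textit{(d)}, let $\mu(x)$ be an existential $\emptyset$-definition of $k$ in $k(t)$: then an existential sentence $\exists\bar y\,\psi(\bar y,t)$ over $(k(t),t)$ is equivalent to the $\forall_1\exists$ sentence $\forall x\,(\mu(x)\vee\exists\bar y\,\psi(\bar y,x))$ over $k(t)$; the $x\in k$ case is absorbed by $\mu$, and for transcendental $x\in k(t)\setminus k$, Case A of \textit{(b)} applied inside $k(t)$ itself shows the existential at $x$ is equivalent to the one at $t$.

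For \textit{(e)}, I exploit that $k$ is perfect of characteristic $p>0$: then $k(t)^{p^r}=k(t^{p^r})$, and for any $x_2\in k(t)\setminus k(t)^p$ the set $\{1,x_2,\dots,x_2^{p^r-1}\}$ is a basis of $k(t)$ over $k(t)^{p^r}$, so every $x_1\in k(t)$ has a unique expression $x_1=\sum_{i=0}^{p^r-1}y_i^{p^r}x_2^i$ with $y_i\in k(t)$, the $y_i$ being existentially recoverable from $x_1$ and $x_2$. Choosing $p^r\ge n$, the sentence $\forall\bar x\exists\bar z\,\psi(\bar x,\bar z)$ of length $|\bar x|=n$ is equivalent over $(k(t),t)$ to the $\forall_1\exists$ sentence $\forall x\exists y_0,\dots,y_{p^r-1},\bar z\,[x=\sum y_i^{p^r}t^i\wedge\psi(y_0,\dots,y_{n-1},\bar z)]$, using that $t\notin k(t)^p$; this handles the first assertion of \textit{(e)}. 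Without the constant, two universals suffice: the sentence $\forall x_1,x_2\exists u,\bar y,\bar z\,[(u^p=x_2)\vee(x_1=\sum y_i^{p^r}x_2^i\wedge\psi(y_0,\dots,y_{n-1},\bar z))]$ has first disjunct handling the trivial case $x_2\in k(t)^p$ and second disjunct giving the reduction when $x_2\notin k(t)^p$ (in particular when one takes $x_2=t$ in the backward direction); this gives the $\forall_2\exists$-to-$\forall\exists$ transfer. Decidability in both subcases combines with \textit{(c)} and \textit{(d)}.
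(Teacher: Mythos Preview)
Your approach to parts \textit{(a)}, \textit{(d)}, and \textit{(e)} is essentially the same as the paper's (the paper uses an ultrapower argument for \textit{(a)} rather than explicit coefficient reduction, but the content is the same), so those are fine.

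The genuine gap is in \textit{(b)}, in the case $a\in l$. Your ``amalgamation'' step is not justified. You have two embeddings out of $k^{**}(t)$: one into $k(t)^{**}\succ k(t)$ and one into $l(t)^{**}$. To complete the square you would need $\mathrm{Th}_\exists(k(t)^{**}/k^{**}(t))\subseteq\mathrm{Th}_\exists(l(t)^{**}/k^{**}(t))$, but nothing you have established gives this: you only know $k(t)\preccurlyeq_\exists k^{**}(t)$, not that $k^{**}(t)\preccurlyeq_\exists k(t)^{**}$, and $k(t)^{**}$ and $l(t)^{**}$ satisfy possibly different complete theories. The paper avoids this entirely by first proving the key lemma: if $k\preccurlyeq_\exists k'$ then $\mathrm{Th}_{\forall_1\exists}(k(t))=\mathrm{Th}_{\forall_1\exists}(k'(t))$. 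The nontrivial inclusion $\mathrm{Th}_{\forall_1\exists}(k(t))\subseteq\mathrm{Th}_{\forall_1\exists}(k'(t))$ is exactly what your amalgamation is trying to circumvent; it is proved by the same transcendence trick you use in your Case~A, applied inside $k'(t)$ (for $b\in k'(t)\setminus k$, note $b$ is transcendental over $k$ since $k'/k$ is regular, and send $t\mapsto b$). With this lemma in hand, one takes $M_b=k_b(t)$ (for $k_b\succ k$) directly as the witness in the $\forall_1\exists$ criterion---no need to pass to an elementary extension of $k(t)$ at all.

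Your argument for \textit{(c)} inherits a related problem. The ``compactness argument uniformizing the degree bound $N$'' does not obviously produce a decision procedure: even granting that $k(t)\models\varphi$ implies the existence of a uniform $N_0$, you have no way to certify \emph{failure} of condition~(I) by searching over $N$. The paper's route is much simpler: for $k$ infinite one has $k\preccurlyeq_\exists k(t)$, so condition~(I) is literally equivalent to $k\models\varphi$, a single $\forall_1\exists$-sentence; for $k$ finite, the quantifier over $k$ is replaced by an existential enumeration of the $q$ elements. No compactness is needed.
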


For example, we deduce that ${\rm Th}_{\forall_1\exists}(k(t))$ is undecidable
for every perfect or large field $k$ of positive characteristic not containing the algebraic closure of the prime field (Corollary \ref{cor:kt_AE1_perfectlarge}).
We also deduce similar results for function fields of curves of genus at least $2$ (Corollary \ref{cor:ff_A1E_gg1}).

In Section \ref{sec:Laurent} we study various theories of henselian valued fields
and obtain the following consequences for Laurent series fields:

\begin{theorem}\label{thm:intro_Laurent}
Let $k,l$ be fields.    
\begin{enumerate}[$(a)$]
\item Suppose ${\rm char}(k)=0$.
If ${\rm Th}_{\forall\exists}(k)\subseteq{\rm Th}_{\forall\exists}(l)$, then
${\rm Th}_{\forall\exists}(k(\!(t)\!),v_t,t)\subseteq{\rm Th}_{\forall\exists}(l(\!(t)\!),v_t,t)$,
and if ${\rm Th}_{\forall\exists}(k)$ is decidable, then so is 
${\rm Th}_{\forall\exists}(k(\!(t)\!),v_t,t)$.
\item  Suppose \Rfour.
If ${\rm Th}_{\forall\exists}(k)\subseteq{\rm Th}_{\forall\exists}(l)$, then
${\rm Th}_{\forall\exists}(k(\!(t)\!),v_t,t)\subseteq{\rm Th}_{\forall\exists}(l(\!(t)\!),v_t,t)$,
and if ${\rm Th}_{\forall\exists}(k)$ is decidable, then so is 
${\rm Th}_{\forall^{\mathbf{k}}\exists}(k(\!(t)\!),v_t,t)$.
\item Suppose \Rfour\ or ${\rm char}(k)=0$.
If ${\rm Th}_{\forall_1\exists}(k)\subseteq{\rm Th}_{\forall_1\exists}(l)$, then
${\rm Th}_{\forall_1\exists}(k(\!(t)\!),v_t)\subseteq {\rm Th}_{\forall_1\exists}(k(\!(t)\!),v_t)$,
and if ${\rm Th}_{\forall_1\exists}(k)$ is decidable, then so is
${\rm Th}_{\forall_1\exists}(k(\!(t)\!),v_t)$.
\item If ${\rm Th}_{\forall_1\exists}(k(\!(t)\!),v_t)\subseteq {\rm Th}_{\forall_1\exists}(l(\!(t)\!),v_t)$, then
${\rm Th}_{\exists}(k(\!(t)\!),v_t,t)\subseteq
{\rm Th}_{\exists}(l(\!(t)\!),v_t,t)$,
and if ${\rm Th}_{\forall_1\exists}(k(\!(t)\!),v_t)$ is decidable, then so is ${\rm Th}_{\exists}(k(\!(t)\!),v_t,t)$.
\item Suppose that $k,l$ are finite.
If
${\rm Th}_{\exists}(k(\!(t)\!),v_t,t)\subseteq
{\rm Th}_{\exists}(l(\!(t)\!),v_t,t)$,
then ${\rm Th}_{\forall_1\exists}(k(\!(t)\!),v_t)\subseteq {\rm Th}_{\forall_1\exists}(l(\!(t)\!),v_t)$,
and if ${\rm Th}_{\exists}(k(\!(t)\!),v_t,t)$ is decidable, then so is
${\rm Th}_{\forall_1\exists}(k(\!(t)\!),v_t)$.
\item Suppose $k,l$ are perfect of positive characteristic.
If
${\rm Th}_{\forall_1\exists}(k(\!(t)\!),v_t,t)\subseteq{\rm Th}_{\forall_1\exists}(l(\!(t)\!),v_t,t)$, then \linebreak
${\rm Th}_{\forall\exists}(k(\!(t)\!),v_t,t)\subseteq{\rm Th}_{\forall\exists}(k(\!(t)\!),v_t,t)$,
and if ${\rm Th}_{\forall_1\exists}(k(\!(t)\!),v_t,t)$ is decidable, then so is
${\rm Th}_{\forall\exists}(k(\!(t)\!),v_t,t)$.
If
${\rm Th}_{\forall_2\exists}(k(\!(t)\!),v_t)\subseteq{\rm Th}_{\forall_2\exists}(l(\!(t)\!),v_t)$, then
${\rm Th}_{\forall\exists}(k(\!(t)\!),v_t)\subseteq{\rm Th}_{\forall\exists}(l(\!(t)\!),v_t)$,
and if ${\rm Th}_{\forall_2\exists}(k(\!(t)\!),v_t)$ is decidable, then so is
${\rm Th}_{\forall\exists}(k(\!(t)\!),v_t)$.
\end{enumerate}
\end{theorem}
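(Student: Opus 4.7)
All six parts will be applications to the Laurent series field $(k((t)),v_t,t)$ of the general transfer and decidability principles for henselian valued fields proved in Section~\ref{sec:Laurent}, combined with the fragment formalism of Section~\ref{sec:fragments}. The common recipe is to characterize each inclusion $\mathrm{Th}_?(M)\subseteq\mathrm{Th}_?(N)$ by an appropriate embedding property between $M$ and a sufficiently saturated elementary extension of $N$, and then to construct that embedding by an Ax--Kochen--Ersov-style argument relative to the residue field; the decidability statements follow once the construction is made computable and uniform, yielding a many-one reduction to the residue-field side.

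Parts $(a)$--$(c)$ are the universal-existential upgrades of the existential transfer theorems of Ax--Kochen and of \cite{AF16,ADF23}. Part $(a)$ uses unconditional Ax--Kochen--Ersov in residue characteristic zero; the plan is to adapt the classical back-and-forth so that the constructed embedding is existentially closed, as is needed to transfer $\forall\exists$. In $(b)$, the hypothesis \Rfour\ replaces residue characteristic zero via the existential embedding theorem of \cite{ADF23}; this is enough for the $\forall\exists$-transfer but forces the effective reduction underlying decidability to handle only universal variables restricted to the $\emptyset$-definable residue copy of $k$, hence the restriction to $\mathrm{Th}_{\forall^{\mathbf{k}}\exists}$ in the decidability conclusion. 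Part $(c)$ is the $\forall_1\exists$ version, where a single universal quantifier is cheap enough that either hypothesis suffices without restriction.

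Parts $(d)$ and $(e)$ exchange the constant $t$ for universal quantifiers. For $(d)$, the set of uniformisers is $\emptyset$-definable by $v_t=1$ and any two uniformisers play symmetric roles in $\mathrm{Th}(k((t)),v_t)$, so $\exists\bar x\,\phi(t,\bar x)$ is equivalent to $\forall y\,(v_t(y)=1\rightarrow\exists\bar x\,\phi(y,\bar x))$, giving $(d)$ directly. Part $(e)$ is the reverse reduction, valid when $k$ is finite because then $k$ is $\emptyset$-definable inside $(k((t)),v_t)$ (as $\{x:v_t(x)\geq 0,\,x^{|k|}=x\}$) and the residue field structure is rich enough for the existential theory with $t$ to encode $\forall_1\exists$ via a suitable counterexample-naming argument. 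Finally, $(f)$ compresses universal quantifiers in perfect characteristic $p$: an $n$-tuple $(x_1,\dots,x_n)$ is encoded as a single element via a diagonal sum such as $\sum_i x_i^{p^{i}}t^{i}$, with perfection ensuring that the coordinates are existentially recoverable using $p$-th roots and $v_t$ together with $t$; an extra universal variable must pick out a uniformiser when $t$ is absent, which is what accounts for the $\forall_1$ versus $\forall_2$ distinction between the two halves of $(f)$. The main obstacle throughout is verifying that all these embeddings and encodings respect the prescribed quantifier complexity, which is the content of the technical lemmas established in Section~\ref{sec:Laurent}.
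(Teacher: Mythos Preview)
Your plan for parts (a)--(e) matches the paper's approach, with two small imprecisions. In (d), ``$v_t(y)=1$'' is not quantifier-free in $\mathfrak{L}_{\rm val}$: there is no constant for the least positive value, so being a uniformizer is only $\forall_1$-definable (Lemma~\ref{lem:uniformizer}), and the paper absorbs the existential witness for ``not a uniformizer'' into the existential block of $\psi$ (Proposition~\ref{lem:AnE}). In (e), the mechanism is not counterexample-naming but a test-point argument via $\mathrm{End}(k((t))/k)$ (Lemma~\ref{lem:AutLaurent}): since $t$ can be sent to any nonzero element of the maximal ideal, it suffices that $\psi$ hold at $t^{-1}$, at each $z\in k$, and at each $z+t$; finiteness of $k$ makes this a finite, hence existential, conjunction (Proposition~\ref{prop:A1Etot}).

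Part (f) has a genuine gap. Reducing $\forall_r$ to $\forall_1$ requires an $\exists$-definable \emph{surjection} $K\to K^r$, not an injection $K^r\to K$. Your map $(x_1,\dots,x_n)\mapsto\sum_i x_i^{p^i}t^i$ is not surjective (for $p\geq 3$ each summand has valuation $\not\equiv 0\pmod p$, so $w=1$ is never hit), and the proposed decoding by ``$p$-th roots and $v_t$'' fails because $K=k((t))$ is not perfect. The paper's argument is purely ring-theoretic, uses no valuation, and lives in Section~\ref{sec:coding} rather than Section~\ref{sec:Laurent}: perfection of $k$ gives $[K:K^p]=p$, so $t$ is a $p$-basis and every $w\in K$ has a unique expansion $w=\sum_{i=0}^{p-1}\lambda_i^p t^i$; the map $w\mapsto(\lambda_0,\lambda_{p-1})$ is then an $\exists$-$\mathfrak{L}_{\rm ring}(t)$-definable surjection $K\to K^2$, iterated for larger $r$ (Lemma~\ref{lem:chi}, Proposition~\ref{prop:coding_param}). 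The extra universal variable in the parameter-free version (Proposition~\ref{prop:coding_noparam}) ranges over $p$-bases, i.e.\ elements not in $K^p$, not over uniformizers; these are different sets in $k((t))$ (for instance $1+t$ is a $p$-basis but not a uniformizer).
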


All results stated here for decidability are in fact consequences of many-one reductions that we obtain.
For $k(\!(t)\!)$ we summarized these reductions in Figure~\ref{diag:Tred} on p.~\pageref{diag:Tred}.

After the already announced treatment of fragments in Section \ref{sec:fragments},
we introduce some general tools for studying ${\rm Th}_{\forall\exists}$ and
${\rm Th}_{\forall_n\exists}$ in Section \ref{sec:criteria}.
In Section \ref{sec:coding}
we explain the positive characteristic 
phenomenon that leads to
Theorem \ref{thm:intro_functionfield}$(e)$ and
Theorem \ref{thm:intro_Laurent}$(f)$.
In sections \ref{sec:ff} and \ref{sec:Laurent}
we then prove the specific results for function fields and for henselian valued fields.
We made an effort to phrase most of our results in a generality that should make them applicable beyond the examples presented in this introduction.
\new{Appendix \ref{sec:appendix} makes precise the computability theoretic notions that we use.}

\section{Fragments}
\label{sec:fragments}

\noindent
Let $\mathfrak{L}$ be a language.
\new{The reader interested only in the theorems stated in the introduction
may suppose that $\mathfrak{L}$
is the language of rings
or valued fields,
possibly expanded by constants.}
As usual, we denote by $\Form(\mathfrak{L})$ the set of $\mathfrak{L}$-formulas
and by $\Sent(\mathfrak{L})\subseteq\Form(\mathfrak{L})$ the set of $\mathfrak{L}$-sentences,
i.e.~the set of $\varphi\in\Form(\mathfrak{L})$ with free variables ${\rm Var}(\varphi)$ empty,
and for an $\mathfrak{L}$-structure $M$
we denote by ${\rm Th}(M)$ the $\mathfrak{L}$-theory of $M$.
An {\em $\mathfrak{L}$-fragment}%
\footnote{In \cite{AF23} an `$\mathfrak{L}$-fragment' is a set of {\em $\mathfrak{L}$-sentences} closed under conjunction and disjunction, and containing $\top$ and $\bot$,
so it is precisely an $\mathfrak{L}$-fragment in our sense that consists of sentences.
There is also a notion of `fragment' introduced in \cite[Definition 2.3]{GHS}
that is similar to --- but strictly stronger than --- our notion of `$\mathfrak{L}$-fragment'.}
is a set $F$ of $\mathfrak{L}$-formulas 
that contains $\top$ and $\bot$ 
\new{(symbols for the atomic sentences {\em verum} and {\em falsum})},
is closed under conjunction and disjunction,
and is closed under free variable substitution,
i.e.~if $\varphi(x_1,\dots,x_n)\in F$, then also $\varphi(y_1,\dots,y_n)\in F$ where $y_1,\dots,y_n$ are arbitrary variables (not necessarily distinct).
For $F\subseteq{\rm Form}(\mathfrak{L})$
we define
$\Form_F(\mathfrak{L}):=F$,
$\Sent_F(\mathfrak{L}):=\Sent(\mathfrak{L})\cap F$,
${\rm Th}_F(M):={\rm Th}(M)\cap F$,
and we write $M\equiv_FM'$ for ${\rm Th}_F(M)={\rm Th}_F(M')$.

To precisely state the uniform aspects of some of our results, we take a functorial approach to fragments:
Namely, we can view $\Form$ as a covariant functor 
$\FForm$
from the category $\mathbf{Lang}$ of languages (with inclusions of languages as morphisms)
to the category $\mathbf{Sets}$ of sets (with inclusions as morphisms).
%
\new{%
The reader not interested in questions of uniformity
can continue to fix their favourite language $\mathfrak{L}$
and proceed directly to Definition \ref{def:fragments},
which is then to be read in light of Remark~\ref{rem:X}.
}

\begin{definition}\label{def:functors}
A {\em functor of formulas}
is a subfunctor $\mathsf{F}$ 
of
the restriction of $\FForm$ to a full subcategory $\mathbf{L}$ of $\mathbf{Lang}$,
and we call it 
a {\em fragment} if 
$\mathsf{F}(\mathfrak{L})$ is an $\mathfrak{L}$-fragment for every $\mathfrak{L}\in\mathbf{L}$,
and in addition
$\mathsf{F}(\mathfrak{L}_2)\cap{\rm Form}(\mathfrak{L}_1)=\mathsf{F}(\mathfrak{L}_1)$
for every $\mathfrak{L}_1\subseteq\mathfrak{L}_2$ in $\mathbf{L}$.
For a functor of formulas $\mathsf{F}$
we also write $\Form_{\mathsf{F}}(\mathfrak{L})$ for ${\rm Form}_{\mathsf{F}(\mathfrak{L})}(\mathfrak{L})=\mathsf{F}(\mathfrak{L})$,
and
$\Sent_{\mathsf{F}}(\mathfrak{L})$ for ${\rm Sent}_{\mathsf{F}(\mathfrak{L})}(\mathfrak{L})=\Form_{\mathsf{F}}(\mathfrak{L})\cap\Sent(\mathfrak{L})$,
${\rm Th}_\mathsf{F}(M)$ for ${\rm Th}_{\mathsf{F}(\mathfrak{L})}(M)={\rm Th}(M)\cap\Sent_\mathsf{F}(\mathfrak{L})$,
and $M\equiv_\mathsf{F}M'$ for ${\rm Th}_\mathsf{F}(M)={\rm Th}_\mathsf{F}(M')$.
Given functors of formulas $\mathsf{F}_1,\mathsf{F}_2$ with domain $\mathbf{L}$,
we say that a family of maps
$\tau_\mathfrak{L}\colon\mathsf{F}_1(\mathfrak{L})\rightarrow\mathsf{F}_2(\mathfrak{L})$ is
{\em uniform} (in $\mathfrak{L}$)
if it is the family of components of a natural transformation $\tau\colon\mathsf{F}_1\rightarrow\mathsf{F}_2$,
\end{definition}

\new{In the following, 
we will speak of computability of languages, sets of formulas, and functors of formulas, 
and we mean this in the usual or obvious sense.
The reader interested in precise definitions can consult Appendix \ref{sec:appendix}.
}

\begin{remark}\label{rem:existential}
Although expressions like ``existential'', ``universal-existential'', ``existential with two quantifiers'' etc.~occur frequently in the literature, it seems that it is often not clear what is meant precisely, and sometimes the different possible meanings differ in an essential way.
For example
there are already very different things meant by "existential formula":
A prenex existential formula, a positive boolean combination of such, a formula in which each existential quantifier is in the scope of an even number of negations and each universal quantifier in the scope of an odd number, or a formula logically equivalent to a prenex existential formula.
Of course, here the question whether two structures satisfy the same ``existential'' sentences does not depend on the precise definition, but the option ``logically equivalent to a prenex existential formula'' presents computability issues, cf.~\cite[Remark~3.5]{AF23}.
The problem becomes more pressing when one counts quantifiers. While it is still true that two structures satisfy the same prenex existential formulas with say two quantifiers if and only if they satisfy the same positive boolean combination of such,
the consequences of a (possibly incomplete) theory that are prenex existential with two quantifiers does not determine
the set of consequences that are positive boolean combinations of such.
We will therefore now give precise definitions of the fragments we will use.
\end{remark}

\begin{definition}\label{def:fragments}
\new{We define a functor of formulas $\mathsf{F}_{0}$ by letting
$\mathsf{F}_{0}(\mathfrak{L})$ be the set of quantifier-free $\mathfrak{L}$-formulas.}
For $Q\in\{\forall,\exists\}$ and a functor of formulas $\mathsf{F}$ 
we inductively define functors of formulas
$Q_n[\mathsf{F}]$, $Q_n\mathsf{F}$,  $Q^n\mathsf{F}$ and $Q\mathsf{F}$ by
\begin{enumerate}[$(i)$]
\item $Q_{1}[\mathsf{F}](\mathfrak{L}):=\{Qx\psi : \psi\in\mathsf{F}(\mathfrak{L}),x\mbox{ a variable}\}\cup\mathsf{F}(\mathfrak{L})$,
\item
$Q_{n+1}[\mathsf{F}]:=Q_{1}[Q_{n}[\mathsf{F}]]$,
\item
$Q_{1}\mathsf{F}(\mathfrak{L})$ is the smallest $\mathfrak{L}$-fragment containing $Q_{1}[\mathsf{F}](\mathfrak{L})$,
\item
$Q_{n+1}\mathsf{F}:=Q_{1}Q_{n}[\mathsf{F}]$,
\item
$Q^{1}\mathsf{F}:=Q_1\mathsf{F}$,
\item
$Q^{n+1}\mathsf{F}:=Q_{1}Q^{n}\mathsf{F}$, and
\item
$Q\mathsf{F} 
:=\bigcup_nQ^n\mathsf{F}$.
\end{enumerate}
We abbreviate
$\exists_n\mathsf{F}_0$, 
$\forall_n\mathsf{F}_0$,
$\exists^n\mathsf{F}_0$, 
$\forall^n\mathsf{F}_0$,
$\exists\mathsf{F}_0$, 
$\forall\mathsf{F}_0$,
by
$\exists_n$, $\forall_n$, $\exists^n$, $\forall^n$, $\exists$, $\forall$. 
\end{definition}

\new{
\begin{remark}\label{rem:X}
If one is interested in only one language $\mathfrak{L}$,
then in Definition~\ref{def:fragments} 
one may safely ignore the framework of functors,
and read the above as an inductive definition of sets
$\mathsf{F}(\mathfrak{L})$
of $\mathfrak{L}$-formulas,
in terms of combinations $\mathsf{F}$ of the quantifiers $\exists$ and $\forall$, and some punctuation.
This suffices to precisely define, for example,
the set of existential $\mathfrak{L}$-formulas
${\rm Form}_\exists(\mathfrak{L})=\exists(\mathfrak{L})$,
and similarly
${\rm Form}_\mathsf{F}(\mathfrak{L})=\mathsf{F}(\mathfrak{L})$
for almost any combination $\mathsf{F}$ of quantifiers
that appears in this paper,
including $\exists_{n}$, $\forall_{1}\exists$, $\forall_{2}\exists$, and $\forall\exists$.
\end{remark}
}

\begin{remark}
\new{All of $\mathsf{F}_{0}$, $Q_{n}\mathsf{F}$, $Q^{n}\mathsf{F}$ and $Q\mathsf{F}$ 
are fragments,
$\mathsf{F}_{0}$ is computable (in the sense of Appendix \ref{sec:appendix}),}
and if $\mathsf{F}$ is computable , then so are
$Q_n[\mathsf{F}]$, $Q_n\mathsf{F}$, $Q^n\mathsf{F}$, and
$Q\mathsf{F}$.
This applies in particular to the fragments
$\exists_n$, $\forall_n$, $\exists^n$, $\forall^n$, $\exists$, $\forall$. 
For an illustration of the difference between $Q_n$ and $Q^n$ 
see Example \ref{ex:QnQn}.
\end{remark}

\begin{remark}\label{rem:prnx}
It is well known that every formula can be turned into an equivalent formula in prenex normal form
(a sequence of quantifiers followed by a quantifier-free formula), and that this can be done in a computable way, as long as the presentation of the language is computable. 
We can also do this {\em relative to a fragment} $\mathsf{F}$ extending $\mathsf{F}_0$.
Let $\mathsf{F}'$ be the smallest fragment extending $\mathsf{F}$ such that each $\mathsf{F}'(\mathfrak{L})$ is closed under negation.
Now, for $\varphi\in\Form(\mathfrak{L})$, we define
${\rm prnx}_{\mathsf{F},\mathfrak{L}}(\varphi)$ inductively as follows:
If $\varphi\in\Form_{\mathsf{F}'}(\mathfrak{L})$
let ${\rm prnx}_{\mathsf{F},\mathfrak{L}}(\varphi)=\varphi$;
otherwise, 
if $\varphi=Q x\psi$ for $Q\in\{\forall,\exists\}$ let
${\rm prnx}_{\mathsf{F},\mathfrak{L}}(\varphi)=Q x\;{\rm prnx}_{\mathsf{F},\mathfrak{L}}(\psi)$;
if $\varphi=\neg\psi$ let $n$ be minimal such that
${\rm prnx}_{\mathsf{F},\mathfrak{L}}(\psi)=Q_1 x_1\dots Q_n x_n\eta$
with $Q_i\in\{\forall,\exists\}$ and $\eta\in\Form_{\mathsf{F}'}(\mathfrak{L})$ and let
${\rm prnx}_{\mathsf{F},\mathfrak{L}}(\varphi)=Q_1' x_1\dots Q_n' x_n\neg\eta$, where $Q_i'=\forall$ if $Q_i=\exists$ and vice versa;
and if $\varphi=\psi_1\square\psi_2$ where $\square\in\{\wedge,\vee\}$
write $\psi_i=Q_{i,1}x_{i,1}\dots Q_{i,n_i}x_{i,n_i}\eta_i$ with $\eta_i\in\Form_{\mathsf{F}'}(\mathfrak{L})$ and $n_i$ minimal,
replace each $x_{2,j}$ that occurs among the $x_{1,j'}$ or free in $\eta_1$
by the next unused variable, and let
${\rm prnx}_{\mathsf{F},\mathfrak{L}}(\varphi)=Q_{1,1}x_{1,1}\dots Q_{1,n_1}x_{i,n_1}Q_{2,1}x_{2,1}\dots Q_{2,n_2}x_{2,n_2}(\eta_1\square\eta_2)$.
Note that $\mathrm{prnx}_{\mathsf{F},\mathfrak{L}}$ is uniform in $\mathfrak{L}$,
by the assumption
$\mathsf{F}(\mathfrak{L}_{1})=\Form(\mathfrak{L}_{1})\cap\mathsf{F}(\mathfrak{L}_{2})$
whenever $\mathfrak{L}_{1}\subseteq\mathfrak{L}_{2}$,
which is why we will sometimes omit the $\mathfrak{L}$ from the notation.
The usual prenex normal form of $\varphi$ is then
${\rm prnx}_{\mathsf{F}_0}(\varphi)$,
for which we also write ${\rm prnx}(\varphi)$.
For example, if $\varphi\in\Form_\exists(\mathfrak{L})$, then
${\rm prnx}(\varphi)\in\Form_{\exists_n}(\mathfrak{L})$ for some $n$,
and if 
$\varphi\in\Form_{\forall\mathsf{F}}(\mathfrak{L})$ for some fragment $\mathsf{F}$,
then ${\rm prnx}_{\mathsf{F}}(\varphi)\in{\rm Form}_{\forall_n[\mathsf{F}]}(\mathfrak{L})$ for some $n$.
If $\mathsf{F}$ is computable,
then ${\rm prnx}_{\mathsf{F},\mathfrak{L}}$ is  computable uniformly in $\mathfrak{L}$.
\end{remark}

\section{General model theoretic criteria}
\label{sec:criteria}

\noindent
Let $\mathfrak{L}$ be a language.
It is well known that an $\mathfrak{L}$-sentence is existential if and only if it is preserved under embeddings of $\mathfrak{L}$-structures,
and that 
$\mathfrak{L}$-structures $M$ and $N$ satisfy
${\rm Th}_\exists(M)\subseteq{\rm Th}_\exists(N)$
if and only if there exists an embedding
of $M$ into an elementary extension of $N$.
A related criterion was given in
\cite[Lemma~3.23(b)]{AF23}:
in particular, for $n\geq1$, we have
$\mathrm{Th}_{\exists_{n}}(M)\subseteq\mathrm{Th}_{\exists_{n}}(N)$ if and only if $\mathrm{Th}_{\exists}(M')\subseteq\mathrm{Th}_{\exists}(N)$ for every substructure $M'\subseteq M$ generated by at most $n$ elements.
We now discuss similar criteria for the fragments $\forall\exists$
and $\forall_{n}\exists$.
As usual $M\preccurlyeq_\exists N$ denotes that $M$ is existentially closed in $N$.
We write ${\rm diag}$ for the atomic diagram and ${\rm eldiag}$ for the elementary (complete) diagram of a structure.

\begin{lemma}\label{lem:AE_abstract}
For $\mathfrak{L}$-structures
$M$ and $N$ the following are equivalent:
\begin{enumerate}[$(a)$]
\item $\mathrm{Th}_{\forall\exists}(M)\subseteq\mathrm{Th}_{\forall\exists}(N)$
\item For every $\max\{|N|,|\mathfrak{L}|,\aleph_0\}$-saturated $M^*\equiv_{\forall\exists} M$ there exists an existentially closed embedding
$N\rightarrow M^*$.
\item There exists some $M^{*}\equiv M$ with an existentially closed embedding $N\rightarrow M^{*}$.
\end{enumerate}
\end{lemma}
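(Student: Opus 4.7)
The plan is to prove the cycle $(b) \Rightarrow (c) \Rightarrow (a) \Rightarrow (b)$. Two of these implications are quick and I expect the substance to lie in $(a) \Rightarrow (b)$.

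First I would deal with $(b) \Rightarrow (c)$ by applying $(b)$ to a sufficiently saturated elementary extension $M^{\ast}$ of $M$, which exists by standard constructions: since $M^{\ast} \equiv M$ forces $M^{\ast} \equiv_{\forall\exists} M$, $(b)$ gives the desired embedding. Next I would handle $(c) \Rightarrow (a)$ by a direct reading-off: given $\sigma = \forall \bar x\, \exists \bar y\, \psi(\bar x, \bar y) \in \mathrm{Th}_{\forall\exists}(M)$ with $\psi$ quantifier-free (appealing to prenex normal form from Remark~\ref{rem:prnx}), elementarity gives $M^{\ast} \models \sigma$, and for any $\bar a \in N$ existential closedness of $i$ reflects $M^{\ast} \models \exists \bar y\, \psi(i(\bar a), \bar y)$ back to $N \models \exists \bar y\, \psi(\bar a, \bar y)$; hence $N \models \sigma$.

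The substance is $(a) \Rightarrow (b)$. My plan is to fix a $\kappa$-saturated $M^{\ast} \equiv_{\forall\exists} M$ with $\kappa = \max\{|N|, |\mathfrak{L}|, \aleph_{0}\}$, pass to the expanded language $\mathfrak{L}_{N} := \mathfrak{L} \cup \{c_{n} : n \in N\}$, and observe that an existentially closed embedding $i : N \to M^{\ast}$ is precisely an interpretation $c_{n} \mapsto i(n) \in M^{\ast}$ such that $(M^{\ast}, i(n))_{n \in N} \models T_{N}$, where $T_{N} := \mathrm{Th}_{\forall}\bigl((N, n)_{n \in N}\bigr)$: the theory $T_{N}$ contains $\mathrm{diag}(N)$ (so $i$ is an embedding) and the negation of every existential $\mathfrak{L}_{N}$-sentence false in $(N,n)_{n\in N}$ (so $i$ is existentially closed). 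The task thus becomes the realization in $M^{\ast}$ of a partial type $p((x_{n})_{n \in N})$ of cardinality $\leq \kappa$ in $|N| \leq \kappa$ variables, over $\emptyset$.

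The key is finite satisfiability of $p$, and this is where I would exploit hypothesis $(a)$. A finite subset of $p$ is equivalent to a single universal formula $\Phi(\bar x)$ with $N \models \Phi(\bar n)$ for some tuple $\bar n$, hence $N \models \exists \bar x\, \Phi(\bar x)$; this is an $\exists\forall$-sentence, whose $\forall\exists$-negation is therefore false in $N$, and by the contrapositive of $(a)$ false in $M$. Since $M \equiv_{\forall\exists} M^{\ast}$ passes to $M \equiv_{\exists\forall} M^{\ast}$ by taking negations, I deduce $M^{\ast} \models \exists \bar x\, \Phi(\bar x)$, giving the required $M^{\ast}$-witness. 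The hard part — and the reason for the specific choice of $\kappa$ — will be upgrading finite satisfiability to an actual realization of the $|N|$-variable type $p$ in $M^{\ast}$; I would do this by, for instance, first extracting by compactness an elementary extension $M^{\ast\ast} \succcurlyeq M^{\ast}$ realizing $p$ via a tuple $(n^{\ast\ast})_{n \in N}$, and then invoking the $\kappa$-saturation of $M^{\ast}$ to transport this tuple back inside $M^{\ast}$.
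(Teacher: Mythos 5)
Your proof is correct, and on the two easy implications it agrees with the paper's. For the substantive implication $(a)\Rightarrow(b)$ you take a genuinely different route. The paper first reduces to the case $M^*\equiv M$, then invokes the classical sandwich theorem \cite[Proposition 5.2.2]{CK} to obtain $M\preccurlyeq M'$ and $N\preccurlyeq N'$ with $N\leq M'\leq N'$, cuts $M'$ down by L\"owenheim--Skolem to some $M''\succcurlyeq_{\exists} N$ of cardinality $\kappa$, and finally embeds $M''$ elementarily into $M^*$ via $\kappa^+$-universality. You instead build the existentially closed embedding directly as a realization in $M^*$ of the type $p$ obtained from ${\rm Th}_{\forall}\bigl((N,n)_{n\in N}\bigr)$ by turning the constants into $|N|\leq\kappa$ variables; hypothesis $(a)$, in its contrapositive $\exists\forall$ form, enters exactly at the finite-satisfiability step, and your identification of ``models of $T_N$'' with ``existentially closed embeddings of $N$'' is accurate. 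The one step you rightly flag as delicate --- pulling a realization of a type in possibly exactly $\kappa$ variables from $M^{**}$ back into the $\kappa$-saturated $M^*$ --- is handled by the standard transfinite induction in which each stage uses only $<\kappa$ parameters; this is precisely the proof of the $\kappa^+$-universality that the paper cites, so the set-theoretic bookkeeping behind the choice of $\kappa$ is the same in both arguments. Net effect: your version inlines the compactness argument that is hidden inside the sandwich theorem and is therefore more self-contained, while the paper's is shorter on the page because it delegates that work to \cite{CK}; a side benefit of your argument is that it handles an arbitrary $M^*\equiv_{\forall\exists}M$ directly, without the paper's initial reduction to $M^*\equiv M$.
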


\begin{proof}
The implication $(b)\Rightarrow(c)$ is trivial,
and $(c)\Rightarrow(a)$ is immediate from the definitions:
If $N\preccurlyeq_\exists M^*\equiv_{\forall\exists} M$ and $\varphi\in{\rm Th}_{\forall\exists}(M)$, then without loss of generality $\varphi$ is of the form $\forall\underline{x}\psi(\underline{x})$ with $\psi\in{\rm Form}_\exists(\mathfrak{L})$ (Remark \ref{rem:prnx}).
Now $M^*\equiv_{\forall\exists} M\models\varphi$ implies that $M^*\models\psi(\underline{a})$
for every $\underline{a}\in N^n$,
and therefore $N\models\psi(\underline{a})$
by the assumption $N\preccurlyeq_\exists M^*$.

To prove $(b)$ assuming $(a)$,
first note that we can assume without loss of generality that
$M^*\equiv M$.
Now
\cite[Proposition 5.2.2]{CK}
gives elementary extensions $M\preccurlyeq M'$ and $N\preccurlyeq N'$ such that
$N\leq M'\leq N'$.
By the Löwenheim--Skolem theorem 
there exists $N\leq M''\preccurlyeq M'$
of cardinality
$|M''|=\kappa:=\max\{|N|,|\mathfrak{L}|,\aleph_{0}\}$,
in particular $N\preccurlyeq_\exists M''$.
As the $\kappa$-saturated $M^*$ is $\kappa^+$-universal \cite[Theorem 8.1.6]{Hodges},
there is an elementary embedding $M''\rightarrow M^*$.
\end{proof}

\begin{remark}
We focus on universal-existential theories but could instead talk about existential-universal theories, as
(a) of Lemma \ref{lem:AE_abstract} is trivially equivalent to
\begin{enumerate}
    \item[\textbullet] $\mathrm{Th}_{\exists\forall}(M)\supseteq\mathrm{Th}_{\exists\forall}(N)$
\end{enumerate}
We will not make use of it but point out that (a)-(c) of Lemma \ref{lem:AE_abstract} are also equivalent to
\begin{enumerate}
    \item[\textbullet] There exists a chain $(M_i)_{i\in I}$ of $\mathfrak{L}$-structures
    with $\varinjlim_{i\in I}M_i\equiv N$
    and $M_i\equiv M$ for every $i\in I$.
\end{enumerate}
as follows for example from
\cite[Theorem 3.1.8]{TZ} (with $T_1={\rm Th}(M)$, $T_2={\rm Th}(N)$).
\end{remark}

\begin{lemma}\label{lem:AnE_abstract}
Let $n\in\mathbb{N}$.
For two
$\mathfrak{L}$-structures
$M$ and $N$,
the following are equivalent:
\begin{enumerate}[$(a)$]
\item
$\mathrm{Th}_{\forall_{n}\exists}(M)\subseteq\mathrm{Th}_{\forall_{n}\exists}(N)$
\item
For every
$N^{*}\succcurlyeq N$
and every $\underline{b}\in (N^{*})^n$,
there exist $N_{\underline{b}}\succcurlyeq N^{*}$,
$M_{\underline{b}}\succcurlyeq M$,
and an $\mathfrak{L}$-embedding 
$f_{\underline{b}}\colon M_{\underline{b}}\rightarrow N_{\underline{b}}$
with $b_1,\dots,b_n\in f_{\underline{b}}(M_{\underline{b}})$.
\item
For every $N^{*}\succcurlyeq N$
there exists $N^{**}\succcurlyeq N^{*}$
such that for every ${\underline{b}}\in (N^{*})^n$
there exist $M_{\underline{b}}\succcurlyeq M$ and an $\mathfrak{L}$-embedding 
$f_{\underline{b}}\colon M_{\underline{b}}\rightarrow N^{**}$
with $b_1,\dots,b_n\in f_{\underline{b}}(M_{\underline{b}})$.
\item
There exists $N^{*}\succcurlyeq N$
such that for every ${\underline{b}}\in(N^{*})^n$
there exist $M_{\underline{b}}\succcurlyeq M$ and an $\mathfrak{L}$-embedding 
$f_{\underline{b}}\colon M_{\underline{b}}\rightarrow N^{*}$
with $b_1,\dots,b_n\in f_{\underline{b}}(M_{\underline{b}})$.
\item
There exists $N^{*}\succcurlyeq N$
such that for every ${\underline{b}}\in(N^{*})^n$
there exist $M_{\underline{b}}\equiv_{\forall_{n}\exists}M$ and an $\mathfrak{L}$-embedding 
$f_{\underline{b}}\colon M_{\underline{b}}\rightarrow N^{*}$
with $b_1,\dots,b_n\in f_{\underline{b}}(M_{\underline{b}})$.
\item
For every ${\underline{b}}\in N^n$,
there exist $N_{\underline{b}}\succcurlyeq_\exists N$,
$M_{\underline{b}}\equiv_{\forall_n\exists} M$,
and an $\mathfrak{L}$-embedding 
$f_{\underline{b}}\colon M_{\underline{b}}\rightarrow N_{\underline{b}}$
with $b_1,\dots,b_n\in f_{\underline{b}}(M_{\underline{b}})$.
\end{enumerate}
\end{lemma}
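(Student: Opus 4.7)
The plan is to establish the cycle $(a) \Rightarrow (b) \Rightarrow (c) \Rightarrow (d) \Rightarrow (e) \Rightarrow (f) \Rightarrow (a)$. The substantive forward step is $(a) \Rightarrow (b)$, a compactness argument built on the standard embedding criterion recalled at the start of Section~\ref{sec:criteria}; the descent $(b) \Rightarrow (c) \Rightarrow (d) \Rightarrow (e) \Rightarrow (f)$ is a chain of soft specializations using Löwenheim–Skolem and saturation; and the closing step $(f) \Rightarrow (a)$ uses preservation of existential formulas under embeddings together with existential closedness, much in the spirit of the proof of Lemma~\ref{lem:AE_abstract}.

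For $(a) \Rightarrow (b)$, fix $N^* \succcurlyeq N$ and $\underline{b} \in (N^*)^n$, introduce fresh constants $\underline{c} = (c_1,\dots,c_n)$, and consider
\[
T^* \;:=\; \mathrm{eldiag}(M) \,\cup\, \{\,\neg \psi(\underline{c}) \,:\, \psi(\underline{x}) \in \Form_\exists(\mathfrak{L}),\ N^* \not\models \psi(\underline{b})\,\}.
\]
If $T^*$ were inconsistent, compactness would yield existential $\psi_1,\dots,\psi_k$ with $N^* \not\models \psi_i(\underline{b})$ such that $\mathrm{eldiag}(M) \vdash \bigvee_i \psi_i(\underline{c})$; since the $c_i$ are fresh, $M \models \forall \underline{x}\, \bigvee_i \psi_i(\underline{x})$, which is a $\forall_n\exists$-sentence (the disjunction of existential formulas being equivalent to an existential formula). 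By $(a)$ this sentence holds in $N$, hence in $N^* \succcurlyeq N$, contradicting the choice of the $\psi_i$. Any model of $T^*$, restricted to $\mathfrak{L}$, is then an elementary extension $M_{\underline{b}} \succcurlyeq M$ with a tuple $\underline{a}$ interpreting $\underline{c}$ such that $\mathrm{Th}_\exists(M_{\underline{b}}, \underline{a}) \subseteq \mathrm{Th}_\exists(N^*, \underline{b})$, and the embedding criterion supplies $N_{\underline{b}} \succcurlyeq N^*$ and an $\mathfrak{L}$-embedding $f_{\underline{b}} \colon M_{\underline{b}} \to N_{\underline{b}}$ sending $\underline{a}$ to $\underline{b}$.

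For $(b) \Rightarrow (c)$, given $N^* \succcurlyeq N$, take $N^{**} \succcurlyeq N^*$ that is $\kappa^+$-saturated for $\kappa = \max\{|N^*|,|\mathfrak{L}|,\aleph_0\}$: for each $\underline{b} \in (N^*)^n$, shrink the $N_{\underline{b}}$ supplied by $(b)$ via Löwenheim–Skolem to an extension of $N^*$ of size at most $\kappa$ that still contains the image of $M_{\underline{b}}$, then elementarily embed this into $N^{**}$ over $N^*$ by $\kappa^+$-universality. The remaining forward steps are formal: $(c) \Rightarrow (d)$ by specializing to $N^* = N$; $(d) \Rightarrow (e)$ since $\succcurlyeq$ implies $\equiv_{\forall_n\exists}$; and $(e) \Rightarrow (f)$ by setting $N_{\underline{b}} := N^*$ for $\underline{b} \in N^n \subseteq (N^*)^n$, noting that $N \preccurlyeq N^*$ entails $N \preccurlyeq_\exists N^*$. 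For $(f) \Rightarrow (a)$, put $\varphi \in \mathrm{Th}_{\forall_n\exists}(M)$ in prenex form $\forall \underline{x}\, \psi(\underline{x})$ with $\psi$ existential (Remark~\ref{rem:prnx}); given $\underline{b} \in N^n$, the $f_{\underline{b}}$ from $(f)$ supplies $\underline{a}$ with $f_{\underline{b}}(\underline{a}) = \underline{b}$, and $M_{\underline{b}} \equiv_{\forall_n\exists} M \models \varphi$ forces $M_{\underline{b}} \models \psi(\underline{a})$; preservation of existentials under embeddings yields $N_{\underline{b}} \models \psi(\underline{b})$, and $N \preccurlyeq_\exists N_{\underline{b}}$ finally gives $N \models \psi(\underline{b})$.

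The main obstacle is the compactness step in $(a) \Rightarrow (b)$: one must correctly recognize the relevant finite disjunction of existential formulas as still lying in the existential fragment, so that its universal closure becomes a genuine $\forall_n \exists$-consequence of $\mathrm{Th}(M)$ on which hypothesis $(a)$ can bite. Everything else is standard model-theoretic bookkeeping parallel to the proof of Lemma~\ref{lem:AE_abstract}.
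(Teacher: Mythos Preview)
Your arguments for $(a)\Rightarrow(b)$ and $(f)\Rightarrow(a)$ are correct; the former is essentially the contrapositive reformulation of the paper's approach via the universal type of $\underline{b}$, and both lead to the same compactness core.

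The genuine gap is at $(c)\Rightarrow(d)$. Specializing $(c)$ to $N^*=N$ produces an $N^{**}\succcurlyeq N$ such that every tuple $\underline{b}\in N^n$ is covered by some $f_{\underline{b}}(M_{\underline{b}})\subseteq N^{**}$; but $(d)$ demands a single $N^*\succcurlyeq N$ such that every tuple from $(N^*)^n$ --- including those involving new elements of $N^*$ not in $N$ --- is so covered \emph{inside $N^*$ itself}. Your $N^{**}$ may well contain $n$-tuples not lying in any $f_{\underline{b}}(M_{\underline{b}})$, since $(c)$ only controls tuples from the smaller model $N$. The paper closes this gap by iterating $(c)$ along an elementary chain $N=N_0\preccurlyeq N_1\preccurlyeq\cdots$, taking $N_{i+1}$ to be the $N^{**}$ obtained from $(c)$ applied with $N^*=N_i$; the union $\varinjlim_i N_i$ then serves as the $N^*$ required in $(d)$, since any tuple in the union already lies in some $N_i$ and is therefore covered inside $N_{i+1}$.

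A smaller point in $(b)\Rightarrow(c)$: your choice $\kappa=\max\{|N^*|,|\mathfrak{L}|,\aleph_0\}$ need not bound $|f_{\underline{b}}(M_{\underline{b}})|=|M_{\underline{b}}|\geq|M|$, so the L\"owenheim--Skolem shrinking of $N_{\underline{b}}$ to size $\leq\kappa$ while still containing $f_{\underline{b}}(M_{\underline{b}})$ may be impossible. Either enlarge $\kappa$ to also dominate $|M|$, or follow the paper and simply take $\kappa$ larger than every $|N_{\underline{b}}|$ (a set-sized supremum over $\underline{b}\in(N^*)^n$), which dispenses with the shrinking step altogether.
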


\begin{proof}
$(a)\Rightarrow(b)$.
Without loss of generality $N=N^{*}$.
Let ${\underline{b}}\in N^n$
and let $\underline{x}$ denote the tuple of variables $(x_1,\dots,x_n)$.
Write 
$$
p(\underline{x})=\{\forall y_1,\dots,y_m\;\psi(\underline{x},\underline{y})\mid m\in\mathbb{N},N\models\forall\underline{y}\;\psi(\underline{b},\underline{y}),\psi\in{\rm Form}_{\mathsf{F}_0}(\mathfrak{L})\}.
$$
This is the universal type of ${\underline{b}}$ in $N$.
We claim that $p(\underline{x})\cup {\rm eldiag}(M)$ is finitely satisfiable:
a conjunction of finitely many elements of $p(\underline{x})$ is logically equivalent to a universal formula in free variables $\underline{x}$, that is again in $p(\underline{x})$,
say
$\forall\underline{y}\;\psi(\underline{x},\underline{y})$.
Since ${\underline{b}}\in N^n$, we have
$N\models\exists\underline{x}\forall\underline{y}\;\psi(\underline{x},\underline{y})$.
By (a) this implies
$M\models\exists\underline{x}\forall\underline{y}\;\psi(\underline{x},\underline{y})$,
which proves the claim.
So by the compactness theorem there is an elementary extension $M_{\underline{b}}\succcurlyeq M$ and a tuple $\underline{a}_{\underline{b}}\in M_{\underline{b}}^n$ such that
$M_{\underline{b}}\models p(\underline{a}_{\underline{b}})$.

We now consider the 
language $\mathfrak{L}'=\mathfrak{L}(\underline{c})$ where $\underline{c}=(c_1,\dots,c_n)$ is an $n$-tuple of new constant symbols,
and the $\mathfrak{L}(\underline{c})$-structures
$(M_{\underline{b}},\underline{a}_{\underline{b}})$ and
$(N,\underline{b})$.
The $\mathfrak{L}'(M_{\underline{b}}\cup N)$-theory
$T=\mathrm{diag}(M_{\underline{b}},\underline{a}_{\underline{b}})\cup\mathrm{eldiag}(N,{\underline{b}})$
is finitely satisfiable:
A conjunction of finitely many elements from
$\mathrm{diag}(M_{\underline{b}},\underline{a}_{\underline{b}})$ is logically equivalent to
a sentence of the form
$\varphi(\underline{c},\underline{d})$ for a quantifier-free $\mathfrak{L}$-formula $\varphi(\underline{x},\underline{y})$ and $d_1,\dots,d_m\in M_{\underline{b}}$.
Then $M_{\underline{b}}\not\models\forall\underline{y}\neg\varphi(\underline{a}_{\underline{b}},\underline{y})$
and therefore $N\models\exists\underline{y}\varphi(\underline{b},\underline{y})$,
so $(N,\underline{b})$ can be expanded to a model
of ${\rm eldiag}(N,\underline{b})\cup\{\varphi(\underline{c},\underline{d})\}$.
So by the compactness theorem, $T$ has a model $N_{\underline{b}}$,
which is then an elementary extension of $N$,
and the $\mathfrak{L}$-embedding $f_{\underline{b}}\colon M_{\underline{b}}\rightarrow N_{\underline{b}}$
with $f_{\underline{b}}(\underline{a}_{\underline{b}})=\underline{b}$
comes as usual from the method of diagrams.

$(b)\Rightarrow(c)$.
For each $\underline{b}\in(N^*)^n$ fix $N_{\underline{b}}$, $M_{\underline{b}}$ and $f_{\underline{b}}$ as given by $(b)$.
Choose a cardinal number $\kappa$ with $\kappa>|N_{\underline{b}}|$ for every $\underline{b}\in(N^*)^n$.
Let $N^{**}\succcurlyeq N^{*}$ be $\kappa$-saturated,
in particular $\kappa^+$-universal,
so that for every $\underline{b}\in(N^*)^n$
there is an elementary $\mathfrak{L}(N^*)$-embedding $g_{\underline{b}}\colon N_{\underline{b}}\rightarrow N^{**}$.
Then $g_{\underline{b}}\circ f_{\underline{b}}\colon M_{\underline{b}}\rightarrow N^{**}$ is as required for (c).

$(c)\Rightarrow(d)$.
Write $M_{0}=M$ and $N_{0}=N$.
We recursively construct
an increasing elementary chain
$(N_{i})_{i\in\omega}$, using $(c)$,
such that
for each $i$ and ${\underline{b}}\in(N_i)^n$ there exists $M_{\underline{b}}\succcurlyeq M$, $f_{\underline{b}}\colon M_{\underline{b}}\rightarrow N_{i+1}$ with $b_1,\dots,b_n\in f_{\underline{b}}(M_{\underline{b}})$.
Then $N^*:=\varinjlim_i N_i$ satisfies the claim.

$(d)\Rightarrow(e)\Rightarrow(f)$.
Trivial.

$(f)\Rightarrow(a)$.
Let $\varphi\in {\rm Th}_{\forall_n\exists}(M)$.
Without loss of generality, $\varphi=\forall x_1,\dots,x_n\exists\underline{y}\;\psi(\underline{x},\underline{y})$ with $\psi$ quantifier-free (Remarks \ref{rem:existential} and \ref{rem:prnx}).
Let $\underline{b}\in N^n$ and let $N_{\underline{b}}$, $M_{\underline{b}}$ and $f_{\underline{b}}\colon M_{\underline{b}}\rightarrow N_{\underline{b}}$ be given as in $(f)$, in particular 
there exists $\underline{a}_{\underline{b}}\in (M_{\underline{b}})^n$ with
$f_{\underline{b}}(\underline{a}_{\underline{b}})=\underline{b}$.
Since $M_{\underline{b}}\equiv_{\forall_n\exists} M$ and $M\models\varphi$, also $M_{\underline{b}}\models\varphi$,
so there exists $\underline{c}\in(M_{\underline{b}})^m$ such that
$M_{\underline{b}}\models\psi(\underline{a}_{\underline{b}},\underline{c})$.
Applying $f_{\underline{b}}$ we get
$N_{\underline{b}}\models\psi(\underline{b},f_{\underline{b}}(\underline{c}))$,
in particular
$N_{\underline{b}}\models\exists\underline{y}\psi(\underline{b},\underline{y})$.
Since $\underline{b}\in N^n$ and $N\preccurlyeq_{\exists}N_{\underline{b}}$,
also $N\models\exists\underline{y}\psi(\underline{b},\underline{y})$.
Thus, $N\models\varphi$.
\end{proof}

\begin{corollary}\label{cor:AE}
For $\mathfrak{L}$-structures
$M$ and $N$ the following are equivalent:
\begin{enumerate}[$(a)$]
\item $\mathrm{Th}_{\forall\exists}(M)\subseteq\mathrm{Th}_{\forall\exists}(N)$
\item There exists
$N^{*}\succcurlyeq N$
such that for
every finite subset $B\subseteq N^{*}$
there exist $M_B\succcurlyeq M$
and an $\mathfrak{L}$-embedding 
$f_{B}\colon M_{B}\rightarrow N^*$
with $B\subseteq f_{B}(M_{B})$.
\end{enumerate}
\end{corollary}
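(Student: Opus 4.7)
The plan is to reduce to Lemma \ref{lem:AnE_abstract} applied for each $n\in\mathbb{N}$ and to combine the resulting extensions by a chain construction, using that $\mathrm{Th}_{\forall\exists}=\bigcup_n\mathrm{Th}_{\forall_n\exists}$ so that $(a)$ is equivalent to $\mathrm{Th}_{\forall_n\exists}(M)\subseteq\mathrm{Th}_{\forall_n\exists}(N)$ for every $n$, which activates Lemma \ref{lem:AnE_abstract}(c) at every tuple length.

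For the easier direction $(b)\Rightarrow(a)$, I would take $\varphi\in\mathrm{Th}_{\forall\exists}(M)$, which by Remark \ref{rem:prnx} may be assumed to be of the form $\forall x_1\dots x_n\,\psi(\underline{x})$ with $\psi\in\mathrm{Form}_\exists(\mathfrak{L})$. For $\underline{b}\in N^n\subseteq(N^*)^n$, apply $(b)$ to $B=\{b_1,\dots,b_n\}$ to obtain $M_B\succcurlyeq M$ and $f_B\colon M_B\to N^*$ with $\underline{b}\in f_B(M_B)$; since $M_B\models\varphi$, an existential witness in $M_B$ transports through $f_B$ to give $N^*\models\psi(\underline{b})$, and $N\preccurlyeq N^*$ together with the existential character of $\psi$ then yields $N\models\psi(\underline{b})$, hence $N\models\varphi$.

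For $(a)\Rightarrow(b)$, the plan is to construct $N^*$ as the union of an elementary chain $N=N_0\preccurlyeq N_1\preccurlyeq\dots$ arranged so that for each $i$ and every finite $B\subseteq N_i$, there exist $M_B\succcurlyeq M$ and $f_B\colon M_B\to N_{i+1}$ with $B\subseteq f_B(M_B)$. To pass from $N_i$ to $N_{i+1}$ I would run an inner elementary chain $N_i=L_{i,0}\preccurlyeq L_{i,1}\preccurlyeq\dots$ in which $L_{i,n+1}$ is produced by invoking Lemma \ref{lem:AnE_abstract}(c) at tuple length $n+1$ (legitimately, since $N\preccurlyeq L_{i,n}$ implies $\mathrm{Th}_{\forall_{n+1}\exists}(M)\subseteq\mathrm{Th}_{\forall_{n+1}\exists}(L_{i,n})$) so that every $(n+1)$-tuple from $L_{i,n}$ --- and thus every such tuple from $N_i$ --- is handled inside $L_{i,n+1}$, and then set $N_{i+1}=\bigcup_n L_{i,n}$. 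Any finite $B\subseteq N^*:=\bigcup_i N_i$ lies in some $N_i$ by finiteness, so $B$ is handled in the passage from $N_i$ to $N_{i+1}$.

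The only real obstacle is bookkeeping rather than mathematical: Lemma \ref{lem:AnE_abstract} supplies an elementary extension accommodating tuples of a single fixed length, whereas condition $(b)$ demands a single $N^*$ that works for all finite subsets at once. This is exactly what the nested construction handles, the inner chain absorbing all tuple lengths at a fixed outer stage and the outer chain absorbing finite subsets that only materialise at later stages of the union.
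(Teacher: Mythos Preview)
Your proposal is correct and follows essentially the same approach as the paper, reducing to Lemma~\ref{lem:AnE_abstract} at each tuple length and building $N^*$ as the union of an elementary chain. The paper's construction is slightly more economical: instead of a nested inner/outer chain, it uses a single chain $N=N_1\preccurlyeq N_2\preccurlyeq\cdots$ where at stage $n$ one applies Lemma~\ref{lem:AnE_abstract} (for $n$-tuples) to handle all $B\subseteq N_n$ with $|B|\leq n$, which suffices since any finite $B\subseteq N^*$ lies in some $N_n$ with $n\geq|B|$.
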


\begin{proof}
$(a)\Rightarrow(b)$: By Lemma \ref{lem:AnE_abstract}$(d)$
we can construct an elementary chain
$N=N_1\preccurlyeq N_2\preccurlyeq\dots$
such that for each $n$
and $B\subseteq N_n$ with $|B|\leq n$
there exists $M_B\succcurlyeq M$ and an $\mathfrak{L}$-embedding $f_B\colon M_B\rightarrow N_{n+1}$ with $B\subseteq f_B(M_B)$. Thus $N^*:=\varinjlim_n N_n$ satisfies $(b)$.

$(b)\Rightarrow(a)$: Condition $(b)$ implies Lemma \ref{lem:AnE_abstract}$(d)$ for every $n$, so
$\mathrm{Th}_{\forall_{n}\exists}(M)\subseteq\mathrm{Th}_{\forall_{n}\exists}(N)$
for every $n$. Since every $\varphi\in{\rm Form}_{\forall\exists}(\mathfrak{L})$
is equivalent to
${\rm prnx}_{\exists,\mathfrak{L}}(\varphi)\in{\rm Form}_{\forall_n\exists}(\mathfrak{L})$ for some $n$,
it follows that
$\mathrm{Th}_{\forall\exists}(M)\subseteq\mathrm{Th}_{\forall\exists}(N)$.
\end{proof}

\begin{example}\label{ex:QnQn}
We will not make much use of the fragments $\forall^n\exists$, but would nevertheless like to now discuss their relation to the fragments $\forall_n\exists$ that we indeed will work with.
By definition, 
${\rm Th}_{\forall_1\exists}(M)={\rm Th}_{\forall^1\exists}(M)$ for every $\mathfrak{L}$-structure $M$.
The following example in the language $\mathfrak{L}=\{E\}$ of graphs shows that
${\rm Th}_{\forall_2\exists}(M)$ in general does not determine
${\rm Th}_{\forall^2\exists}(M)$:
Let $\Gamma_2,\Gamma_3,\Gamma_4$ denote the following three tournaments
on 2, 3 respectively 4 vertices:
\tikzstyle{every node}=[circle, draw, fill=black!100,
                        inner sep=0pt, minimum width=4pt]
\begin{center}
\begin{tikzpicture}[scale=1.0]
    \node (1a) at (0,1) [] {}; 
    \node (2a) at (0,0) [] {}; 
    \draw [-{Stealth[length=2mm]}] { (1a) -- (2a) };
    \node (1b) at (1.5,1) [] {}; 
    \node (2b) at (1.5,0) [] {}; 
    \node (3b) at (2.5,0) [] {}; 
    \draw [-{Stealth[length=2mm]}] { (1b) -- (2b) };
    \draw [-{Stealth[length=2mm]}] { (2b) -- (3b) };
    \draw [-{Stealth[length=2mm]}] { (3b) -- (1b) };
    \node (1c) at (4.0,1.0) [] {}; 
    \node (2c) at (4.0,0.0) [] {}; 
    \node (3c) at (5.0,0.0) [] {}; 
    \node (4c) at (5.0,1.0) [] {}; 
    \draw [-{Stealth[length=2mm]}] { (1c) -- (2c) };
    \draw [-{Stealth[length=2mm]}] { (2c) -- (3c) };
    \draw [-{Stealth[length=2mm]}] { (3c) -- (1c) };
    \draw [-{Stealth[length=2mm]}] { (4c) -- (1c) };
    \draw [-{Stealth[length=2mm]}] { (4c) -- (2c) };
    \draw [-{Stealth[length=2mm]}] { (4c) -- (3c) };
\end{tikzpicture}
\end{center}
\noindent
Let $N=\omega\Gamma_2\cup\omega\Gamma_4$
and $M=N\cup\Gamma_3$, where $\cup$ denotes disjoint union
and
$\omega\Gamma_i$ the disjoint union of $\omega$ many copies of $\Gamma_i$.
It is easy to see that for every $a,b\in M$ there is an embedding $f_{ab}\colon N\rightarrow M$ with
$a,b\in f_{ab}(N)$,
and for every $c,d\in N$ there is an embedding $f_{cd}\colon M\rightarrow N$ with
$c,d\in f_{cd}(M)$
(as $\Gamma_3$ embeds into $\Gamma_4$).
So by $(f)\Rightarrow(a)$ in Lemma \ref{lem:AnE_abstract}, 
${\rm Th}_{\forall_2\exists}(M)={\rm Th}_{\forall_2\exists}(N)$.
However, the following sentence is in 
${\rm Th}_{\forall^2\exists}(N)\setminus {\rm Th}_{\forall^2\exists}(M)$, as it holds in $\Gamma_2$ and $\Gamma_4$, but not in $\Gamma_3$:
$$
 \forall x\Bigg(
 (\forall y\neg xEy) \vee
 (\forall y\neg yEx) \vee
 \exists z_1,\dots,z_3\bigg(\bigwedge_{i\neq j}\neg z_i=z_j \wedge \bigwedge_{i}(xEz_i \vee z_iEx)\bigg)\Bigg).
$$
\end{example}

\section{Coding using $p$-th powers}
\label{sec:coding}

\noindent
Every time a structure $M$ carries a surjection $M\rightarrow M^r$ definable by a formula in a fragment like $\exists$, this allows reducing a block of $r$ universal quantifiers to a single universal quantifier followed by some extra existential quantifiers.
We now work out this idea in detail for definable surjections involving $p$-th powers in fields of characteristic $p$.
For the notions of $p$-independence and $p$-bases, and their basic properties see e.g.~\cite[\S2.7]{FJ}.
The language of rings is $\mathfrak{L}_{\rm ring}=\{+,-,\cdot,0,1\}$.
For the propositions in this section,
the reader who skipped our definition of \new{(computable) functors of formulas} can safely replace the fragment $\mathsf{F}$ by the (computable) fragment~$\exists$.

We first exploit that a surjection $M\rightarrow M^r$ can be defined with parameters for a $p$-basis.
\new{This is well-known, but we provide a short proof for lack of a suitable reference:}

\begin{lemma}\label{lem:chi}
For every $p>0$, every $n\geq 1$ and every $r\geq 0$ there exists $\chi_{p,n,r}(x,y_1,\dots,y_r,z_1,\dots,z_n)\in\Form_\exists({\mathfrak{L}_{\rm ring}})$
such that
if 
$K$ is a field of characteristic $p$
with $[K:K^p]=p^n$ and
$a_1,\dots,a_n$ is a $p$-basis of $K$,
then $\chi_{p,n,r}(x,\underline{y},\underline{a})$ defines a surjection $K\rightarrow K^r$.
The map $(p,n,r)\mapsto \chi_{p,n,r}$ is computable.
\end{lemma}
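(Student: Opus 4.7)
\medskip

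The main algebraic input is the following standard fact (see \cite[\S2.7]{FJ}): if $K$ has characteristic $p$, $[K:K^{p}]=p^{n}$, and $a_{1},\ldots,a_{n}$ is a $p$-basis, then the $p^{n}$ monomials $a^{e}:=a_{1}^{e_{1}}\cdots a_{n}^{e_{n}}$, as $e=(e_1,\ldots,e_n)$ ranges over $E:=\{0,\ldots,p-1\}^{n}$, form a basis of $K$ over $K^{p}$.  Combined with the injectivity of Frobenius, this says that for any fixed enumeration $E=\{e_{1},\ldots,e_{p^{n}}\}$, the map $K^{p^{n}}\to K$, $(v_{i})_{i}\mapsto\sum_{i}v_{i}^{p}a^{e_{i}}$, is a bijection.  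The plan is to turn this bijection --- and, when $r>p^{n}$, nested versions of it --- into an existential formula defining the graph of a surjection $K\to K^{r}$.

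I define $\chi_{p,n,r}$ by recursion on $r$, writing $\underline{z}=(z_1,\ldots,z_n)$ and $z^{e}:=z_1^{e_1}\cdots z_n^{e_n}$. For $0\leq r\leq p^{n}$, set
\[
\chi_{p,n,r}(x,y_{1},\ldots,y_{r},\underline{z}):=\exists y_{r+1}\ldots\exists y_{p^{n}}\Bigl(x=\sum_{i=1}^{p^{n}}y_{i}^{p}\,z^{e_{i}}\Bigr);
\]
for $r>p^{n}$, peel off $p^{n}-1$ coordinates as direct slots and recurse on the remaining slot:
\[
\chi_{p,n,r}(x,y_{1},\ldots,y_{r},\underline{z}):=\exists w\,\Bigl(\chi_{p,n,r-p^{n}+1}(w,y_{1},\ldots,y_{r-p^{n}+1},\underline{z})\wedge x=w^{p}z^{e_{1}}+\sum_{i=2}^{p^{n}}y_{r-p^{n}+i}^{p}z^{e_{i}}\Bigr).
\]
Since $n\geq 1$ and $p\geq 2$ give $p^{n}\geq 2$, we have $r-p^{n}+1<r$, so the recursion terminates.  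Each $\chi_{p,n,r}$ is built from a single polynomial equation by conjunction and existential quantification, hence lies in $\Form_{\exists}(\mathfrak{L}_{\rm ring})$, and the construction is clearly computable uniformly in $(p,n,r)$ by primitive recursion.

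To verify the semantic condition, I fix $K$ of characteristic $p$ with $[K:K^{p}]=p^{n}$ and $p$-basis $\underline{a}$, and prove by induction on $r$ that $\chi_{p,n,r}(x,\underline{y},\underline{a})$ defines the graph of a surjective function $f_{r}\colon K\to K^{r}$.  For functionality, given $x\in K$ the tuple $(v_{1},\ldots,v_{p^{n}})$ with $x=\sum v_{i}^{p}a^{e_{i}}$ is unique by the algebraic fact above; in the base range this directly pins down $(y_{1},\ldots,y_{r})$, while in the recursive case the witness $w$ is forced to equal $v_{1}$, the equalities $y_{r-p^{n}+i}=v_{i}$ for $i\geq 2$ are forced, and the inductive hypothesis applied to $w$ determines $(y_{1},\ldots,y_{r-p^{n}+1})$.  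Surjectivity is the reverse construction: given $(y_{1},\ldots,y_{r})\in K^{r}$, use induction to produce $w$ encoding $(y_{1},\ldots,y_{r-p^{n}+1})$ and then set $x:=w^{p}a^{e_{1}}+\sum_{i\geq 2}y_{r-p^{n}+i}^{p}a^{e_{i}}$.  The only delicate step, and the one to double-check carefully, is the index bookkeeping in the recursive clause: one must confirm that $(r-p^{n}+1)+(p^{n}-1)=r$ so that all output coordinates are indeed accounted for; beyond this, the verification is routine.
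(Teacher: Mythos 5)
Your construction is correct and essentially the paper's: both exploit the unique representation $x=\sum_{e}\lambda_{e}^{p}a^{e}$ over a $p$-basis (together with injectivity of Frobenius) to define a surjection of $K$ onto a power of itself, and then iterate to reach arbitrary $r$. The only difference is the branching factor --- the paper extracts just two of the $p^{n}$ coefficients per step (a surjection $K\to K^{2}$, composed $r-1$ times), whereas you extract all $p^{n}$ at once and recurse in blocks of $p^{n}-1$; the verifications (functionality from uniqueness of the decomposition, surjectivity by reversing it) coincide.
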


\begin{proof}
Take $\chi_{p,n,0}=\top$ and 
$\chi_{p,n,1}=(x=y_1)$.
Take $\chi_{p,n,2}$ to be
$$
\exists \lambda_{0,\dots,0},\dots,\lambda_{p-1,\dots,p-1}\Bigg(x=\sum_{0\leq i_{1},\ldots,i_{n}<p}\lambda_{i_{1},\ldots,i_{n}}^{p}\prod_{j=1}^{n}z_j^{i_{j}}\wedge
y_1=\lambda_{0,\dots,0}\wedge
y_2=\lambda_{p-1,\dots,p-1}\Bigg)
$$
and inductively
$\chi_{p,n,r+1}(x,y_1,\dots,y_{r+1},\underline{z})=\exists w(\chi_{p,n,r}(x,y_1,\dots,y_{r-1},w,\underline{z})\wedge\chi_{p,n,2}(w,y_r,y_{r+1},\underline{z}))$.
\end{proof}

\begin{proposition}
\label{prop:coding_param}
Let $p>0$, let $n\geq 1$,
and let $\underline{c}$ be an $n$-tuple of new constant symbols.
Let $\mathsf{F}$ be a
fragment such that
$\exists\mathsf{F}=\mathsf{F}$
and
$\Form_\mathsf{F}(\mathfrak{L})$
includes all quantifier-free
$\mathfrak{L}_{\mathrm{ring}}(\underline{c})$-formulas,
for all $\mathfrak{L}\supseteq\mathfrak{L}_{\rm ring}(\underline{c})$.
For every such $\mathfrak{L}$
there exists a 
map
$$
 \tau_\mathfrak{L}\colon\Form_{\forall\mathsf{F}}(\mathfrak{L})\rightarrow\Form_{\forall_{1}[\mathsf{F}]}(\mathfrak{L})
$$
such that
for every field $K$ of characteristic $p$ with $[K:K^p]=p^{n}$,
for every $p$-basis $\underline{a}$ of $K$,
and
for every $\mathfrak{L}$-structure $K'$ expanding the $\mathfrak{L}_{\rm ring}(\underline{c})$-structure $(K,\underline{a})$,
we have
$K'\models(\varphi\leftrightarrow\tau_{\mathfrak{L}}\varphi)$
and ${\rm Var}(\varphi)={\rm Var}(\tau_{\mathfrak{L}}\varphi)$
for all $\varphi\in\Form_{\forall\mathsf{F}}(\mathfrak{L})$;
in particular, ${\rm Th}_{\forall\mathsf{F}}(K')=\tau_{\mathfrak{L}}^{-1}({\rm Th}_{\forall_1[\mathsf{F}]}(K'))$.
Moreover
$\tau_{\mathfrak{L}}$ is uniform in such $\mathfrak{L}$,
and if $\mathsf{F}$ is computable,
then $\tau_{\mathfrak{L}}$ is computable uniformly in $\mathfrak{L}$, $p$ and $n$.
\end{proposition}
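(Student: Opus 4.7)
The plan is to combine Lemma \ref{lem:chi} with the prenex normal form machinery of Remark \ref{rem:prnx}. Given $\varphi \in \Form_{\forall\mathsf{F}}(\mathfrak{L})$, first apply ${\rm prnx}_\mathsf{F}$ to obtain an equivalent formula of the form $\forall y_1\dots\forall y_r\,\psi(\underline{y},\underline{z}) \in \Form_{\forall_r[\mathsf{F}]}(\mathfrak{L})$, where $\underline{z}$ are the free variables of $\varphi$ and $\psi \in \Form_\mathsf{F}(\mathfrak{L})$; this step is uniform and computable in $\mathfrak{L}$. If $r = 0$, set $\tau_\mathfrak{L}(\varphi) := \psi$; otherwise, choose a fresh variable $x$ (canonically, the lex-first variable not appearing in $\underline{y}$ or $\underline{z}$) and define
$$
\tau_\mathfrak{L}(\varphi) \;:=\; \forall x\, \exists y_1\dots\exists y_r\, \bigl(\chi_{p,n,r}(x,y_1,\dots,y_r,\underline{c}) \wedge \psi(y_1,\dots,y_r,\underline{z})\bigr).
$$

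To see that $\tau_\mathfrak{L}(\varphi) \in \Form_{\forall_1[\mathsf{F}]}(\mathfrak{L})$, observe that $\chi_{p,n,r}(x,\underline{y},\underline{c})$ is a prenex existential $\mathfrak{L}_{\rm ring}(\underline{c})$-formula, whose quantifier-free kernel lies in $\Form_\mathsf{F}(\mathfrak{L})$ by hypothesis on $\mathsf{F}$, so the whole formula $\chi_{p,n,r}(x,\underline{y},\underline{c})$ lies in $\Form_\mathsf{F}(\mathfrak{L})$ since $\exists\mathsf{F} = \mathsf{F}$. Conjunction with $\psi$ stays in $\mathsf{F}$ (fragments are closed under conjunction), the existential quantifier block over $\underline{y}$ stays in $\mathsf{F}$ by $\exists\mathsf{F} = \mathsf{F}$, and a final $\forall x$ puts the result in $\forall_1[\mathsf{F}]$. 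Preservation of free variables is immediate because $x$ and $\underline{y}$ are all bound in $\tau_\mathfrak{L}(\varphi)$.

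For the semantic equivalence in $K'$, the crucial point (implicit in the construction of Lemma \ref{lem:chi}) is that $\chi_{p,n,r}(x,\underline{y},\underline{a})$ defines not merely a surjection but the graph of a \emph{function} $K \to K^r$: using that the monomials $\prod_j a_j^{i_j}$ form a $K^p$-basis of $K$ (uniqueness of the $p$-adic expansion) together with injectivity of Frobenius, the auxiliary $\lambda_I$ witnessing $\chi$ are uniquely determined by $x$, and hence so is $\underline{y}$. Both directions of $\forall\underline{y}\,\psi(\underline{y},\underline{z}) \leftrightarrow \forall x\, \exists\underline{y}(\chi \wedge \psi)$ now follow: given any $x$, totality of the function yields $\underline{y}$ with $\chi(x,\underline{y},\underline{a})$ and $\psi(\underline{y},\underline{z})$ holds by hypothesis; conversely, given $\underline{y}^*$, surjectivity provides $x$ with $\chi(x,\underline{y}^*,\underline{a})$, the hypothesis gives some $\underline{y}$ with $\chi(x,\underline{y},\underline{a}) \wedge \psi(\underline{y},\underline{z})$, and functionality forces $\underline{y} = \underline{y}^*$.

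Naturality in $\mathfrak{L}_1 \subseteq \mathfrak{L}_2$ is inherited from ${\rm prnx}_\mathsf{F}$, since the only extra symbols introduced by the construction come from $\mathfrak{L}_{\rm ring}(\underline{c}) \subseteq \mathfrak{L}_1$; uniform computability in $(\mathfrak{L},p,n)$ combines the uniform computability of ${\rm prnx}_\mathsf{F}$ (Remark \ref{rem:prnx}) with the computability of $(p,n,r)\mapsto\chi_{p,n,r}$ (Lemma \ref{lem:chi}). The main obstacle is essentially bookkeeping---verifying that every intermediate formula remains inside the prescribed fragment and that variable freshness is handled canonically so as not to break naturality---since all the mathematical content already resides in Lemma \ref{lem:chi}.
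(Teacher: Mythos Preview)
Your proof is correct and follows exactly the same construction as the paper: apply ${\rm prnx}_{\mathsf{F}}$ to reach $\forall\underline{y}\,\psi$ with $\psi\in\mathsf{F}$, then replace the universal block by $\forall x\,\exists\underline{y}(\chi_{p,n,r}(x,\underline{y},\underline{c})\wedge\psi)$. The paper simply writes ``is as required'' for the semantic equivalence, whereas you spell out why functionality of $\chi_{p,n,r}$ (not just surjectivity onto $K^r$) is needed for the backward direction; your observation that the $p$-adic expansion determines the $\lambda_I$ uniquely is exactly what justifies this, and it is implicit in the paper's use of the word ``surjection'' (a function).
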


\begin{proof}
Let $\mathfrak{L}\supseteq\mathfrak{L}_{\rm ring}(\underline{c})$
and let
$\varphi(\underline{u})\in\Form_{\forall\mathsf{F}}(\mathfrak{L})$.
Then $\varphi$ is equivalent to
${\rm prnx}_{\mathsf{F},\mathfrak{L}}(\varphi)=\forall x_{1},\ldots, x_{r}\psi(\underline{x},\underline{u})$
for some $r$,
with $\psi(\underline{x},\underline{u})\in\Form_{\mathsf{F}}(\mathfrak{L})$,
see Remark \ref{rem:prnx}.
Let $\tau_{\mathfrak{L}}\varphi$ be the formula
$$
 \forall w\exists\underline{x}(\chi_{p,n,r}(w,\underline{x},\underline{c})\wedge\psi(\underline{x},\underline{u})),
$$
with $\chi_{p,n,r}(w,\underline{x},\underline{c})\in\Form_{\exists}(\mathfrak{L}_{\mathrm{ring}}(\underline{c}))\subseteq\Form_{\mathsf{F}}(\mathfrak{L})$ as in Lemma~\ref{lem:chi}.
Then $\tau_{\mathfrak{L}}\varphi\in\Form_{\forall_{1}[\mathsf{F}]}(\mathfrak{L})$
is as required.
This is uniform in $\mathfrak{L}$, since ${\rm prnx}_{\mathsf{F},\mathfrak{L}}$ is.
If $\mathsf{F}$ is computable, then 
${\rm prnx}_{\mathsf{F},\mathfrak{L}}$
is uniformly computable (Remark \ref{rem:prnx}),
which together with the computability of $\chi_{p,n,r}$ gives the uniform computability of $\tau_\mathfrak{L}$.
\end{proof}

Next we quantify away the parameters for the $p$-basis.

\begin{lemma}\label{lem:pindep}
For every $p>0$ and every $n\geq1$ there exists  
$\pi_{p,n}(z_{1},\ldots,z_{n})\in\Form_\exists(\mathfrak{L}_{\rm ring})$
which defines in
every field $K$ of characteristic $p$
the set $D\subseteq K^n$ of 
$p$-dependent $n$-tuples.
The map $(p,n)\mapsto\pi_{p,n}$ is computable.
\end{lemma}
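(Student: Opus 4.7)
The plan is to translate $p$-dependence directly into a first-order existential formula. By standard facts about $p$-bases (cf.~\cite[\S2.7]{FJ}), a tuple $(a_1,\ldots,a_n)$ in a field $K$ of characteristic $p$ is $p$-dependent if and only if the $p^n$ monomials $\prod_j a_j^{i_j}$ with $\underline{i}=(i_1,\ldots,i_n)\in\{0,\ldots,p-1\}^n$ fail to be $K^p$-linearly independent. Writing each $K^p$-coefficient as a $p$-th power $\lambda_{\underline i}^p$ with $\lambda_{\underline i}\in K$, this is equivalent to the existence of $(\lambda_{\underline i})_{\underline i}\in K^{p^n}$, not all zero, such that
$$
\sum_{\underline i\in\{0,\ldots,p-1\}^n}\lambda_{\underline i}^{p}\prod_{k=1}^{n}a_k^{i_k}=0.
$$

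First I would handle the only genuinely non-trivial point, namely the existential encoding of the condition ``not all $\lambda_{\underline i}$ are zero''. This would be an inequation, which is not a priori existential, but rewriting ``$\lambda\neq 0$'' as $\exists y\,(y\lambda=1)$ and splitting into a finite disjunction over which coordinate carries the witness yields
$$
\pi_{p,n}(z_1,\ldots,z_n)\;:=\;\bigvee_{\underline j\in\{0,\ldots,p-1\}^n}\exists (\lambda_{\underline i})_{\underline i}\,\exists y\,\bigg(y\cdot\lambda_{\underline j}=1\;\wedge\;\sum_{\underline i}\lambda_{\underline i}^{p}\prod_{k=1}^{n}z_k^{i_k}=0\bigg).
$$
This is a disjunction of existential formulas, hence lies in $\Form_\exists(\mathfrak{L}_{\rm ring})$, and by the reformulation above it defines precisely the set of $p$-dependent $n$-tuples in every field $K$ of characteristic $p$.

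For the computability claim, I would simply note that the construction is purely combinatorial in $p$ and $n$: the index sets $\{0,\ldots,p-1\}^n$, the variable names $\lambda_{\underline i}$, and the monomials $\prod_k z_k^{i_k}$ can all be effectively enumerated given $p$ and $n$, and the assembly into the formula above is a straightforward recursion. The main obstacle, if any, is the need to avoid inequality in an existential formula, which is dispatched by the above disjunction trick; the rest is routine.
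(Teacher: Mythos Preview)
Your proof is correct and follows essentially the same idea as the paper's. The only difference is that you take an unnecessary detour to avoid the inequation $\lambda_{\underline i}\neq 0$: in the paper's conventions (Section~\ref{sec:fragments}), $\mathsf{F}_0(\mathfrak{L})$ consists of \emph{all} quantifier-free $\mathfrak{L}$-formulas, including negated atomic ones, so $\neg(\lambda_{\underline i}=0)$ is already quantifier-free and the paper simply places $\bigvee_{\underline i}\neg\,\lambda_{\underline i}=0$ inside the single existential block. Your rewriting via $\exists y\,(y\lambda_{\underline j}=1)$ is harmless but superfluous.
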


\begin{proof}
Take $\pi_{p,n}$ to be
$$
\exists \lambda_{0,\dots,0},\dots,\lambda_{p-1,\dots,p-1}\Bigg(\sum_{0\leq i_{1},\ldots,i_{n}<p}\lambda_{i_{1},\ldots,i_{n}}^{p}\prod_{j=1}^{n}z_j^{i_{j}}=0\wedge
\bigvee_{0\leq i_{1},\ldots,i_{n}<p}\neg \lambda_{i_{1},\ldots,i_{n}}=0\Bigg).
$$
\end{proof}

\begin{remark}
If $[K:K^p]=p^n$ then $B:=K^{n}\setminus D$ with $D$ as in Lemma \ref{lem:pindep} is the set of $p$-bases of $K$,
and $\chi_{p,n,r}$ defines
a surjection $B\times K\rightarrow K^r$
on the $\forall$-definable set $B\times K\subseteq K^{n+1}$.
This can be extended to a surjection
$K^{n+1}\rightarrow K^r$,
which however is then only 
definable by the conjunction of an existential and a universal formula.
\end{remark}

\begin{proposition} 
\label{prop:coding_noparam}
Let $p>0$ and $n\geq 1$.
Let $\mathsf{F}$ be a fragment such that
$\exists\mathsf{F}=\mathsf{F}$
and
$\Form_\mathsf{F}(\mathfrak{L})$ includes all quantifier-free $\mathfrak{L}_{\mathrm{ring}}$-formulas,
for all $\mathfrak{L}\supseteq\mathfrak{L}_{\rm ring}$.
For every such $\mathfrak{L}$,
there exists a map
$$
 \tau_\mathfrak{L}\colon\Form_{\forall\mathsf{F}}(\mathfrak{L})\rightarrow\Form_{\forall_{n+1}[\mathsf{F}]}(\mathfrak{L})
$$
such that
for every field $K$ of characteristic $p$ with $[K:K^p]=p^{n}$,
and for every $\mathfrak{L}$-structure $K'$ expanding $K$,
we have
$K'\models(\varphi\leftrightarrow\tau_{\mathfrak{L}}\varphi)$
and ${\rm Var}(\varphi)={\rm Var}(\tau_{\mathfrak{L}}\varphi)$
for all $\varphi\in\Form_{\forall\mathsf{F}}(\mathfrak{L})$;
in particular
${\rm Th}_{\forall\mathsf{F}}(K')=\tau_{\mathfrak{L}}^{-1}({\rm Th}_{\forall_{n+1}[\mathsf{F}]}(K'))$.
Moreover
$\tau_{\mathfrak{L}}$ is uniform in such $\mathfrak{L}$,
and if $\mathsf{F}$ is computable,
then $\tau_{\mathfrak{L}}$ is computable
uniformly in $\mathfrak{L}$, $p$ and $n$.
\end{proposition}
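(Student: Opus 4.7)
\medskip

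\textbf{Plan.} The strategy is to combine Proposition~\ref{prop:coding_param} with Lemma~\ref{lem:pindep} in order to replace the parameter tuple $\underline{a}$ (a $p$-basis) with a universally quantified $n$-tuple $\underline{z}$, guarded by the existential definition of $p$-dependence. Given $\varphi(\underline{u})\in\Form_{\forall\mathsf{F}}(\mathfrak{L})$, I first apply $\mathrm{prnx}_{\mathsf{F},\mathfrak{L}}$ (Remark~\ref{rem:prnx}) to rewrite $\varphi$ as $\forall x_1,\dots,x_r\,\psi(\underline{x},\underline{u})$ with $\psi\in\Form_\mathsf{F}(\mathfrak{L})$ and some $r\geq 0$. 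Using the formulas $\pi_{p,n}$ of Lemma~\ref{lem:pindep} and $\chi_{p,n,r}$ of Lemma~\ref{lem:chi}, I set
$$
 \tau_\mathfrak{L}\varphi \;:=\; \forall z_1,\dots,z_n\,\forall w\,\Bigl(\pi_{p,n}(\underline{z}) \;\vee\; \exists\underline{x}\bigl(\chi_{p,n,r}(w,\underline{x},\underline{z})\wedge\psi(\underline{x},\underline{u})\bigr)\Bigr).
$$

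The matrix lies in $\Form_\mathsf{F}(\mathfrak{L})$: both $\pi_{p,n}$ and $\chi_{p,n,r}$ are existential $\mathfrak{L}_{\mathrm{ring}}$-formulas and hence in $\mathsf{F}$ by the hypothesis that $\mathsf{F}$ contains all quantifier-free $\mathfrak{L}_{\mathrm{ring}}$-formulas and satisfies $\exists\mathsf{F}=\mathsf{F}$; then conjunction, disjunction and the outer existential all preserve membership in $\mathsf{F}$. Hence $\tau_\mathfrak{L}\varphi\in\Form_{\forall_{n+1}[\mathsf{F}]}(\mathfrak{L})$, with the same free variables $\underline{u}$ as $\varphi$.

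To verify the equivalence in $K'$, fix $\underline{u}$-parameters from $K'$. If $\underline{z}\in K^n$ is $p$-dependent, Lemma~\ref{lem:pindep} gives $\pi_{p,n}(\underline{z})$ and the disjunction trivialises. If $\underline{z}$ is $p$-independent, then since $[K:K^p]=p^n$ it is a $p$-basis, so by Lemma~\ref{lem:chi} the formula $\chi_{p,n,r}(w,\underline{x},\underline{z})$ defines (in $w$) a surjection $K\to K^r$; thus $\forall w\exists\underline{x}(\chi_{p,n,r}(w,\underline{x},\underline{z})\wedge\psi(\underline{x},\underline{u}))$ reduces to $\forall\underline{x}\,\psi(\underline{x},\underline{u})$, which is $\varphi$. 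Running this in both directions (a $p$-basis exists in $K$ since $[K:K^p]=p^n$) yields $K'\models\varphi\leftrightarrow\tau_\mathfrak{L}\varphi$, hence also ${\rm Th}_{\forall\mathsf{F}}(K')=\tau_\mathfrak{L}^{-1}({\rm Th}_{\forall_{n+1}[\mathsf{F}]}(K'))$.

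Finally, for the uniformity and computability clause: $\mathrm{prnx}_{\mathsf{F},\mathfrak{L}}$ is uniform in $\mathfrak{L}$, and the auxiliary formulas $\pi_{p,n}$ and $\chi_{p,n,r}$ belong to the fixed sublanguage $\mathfrak{L}_{\mathrm{ring}}$, so $\tau_\mathfrak{L}$ assembles into a natural transformation. If $\mathsf{F}$ is computable, $\mathrm{prnx}_{\mathsf{F},\mathfrak{L}}$ is uniformly computable (Remark~\ref{rem:prnx}), and the computability of $(p,n,r)\mapsto\chi_{p,n,r}$ and $(p,n)\mapsto\pi_{p,n}$ (Lemmas~\ref{lem:chi},~\ref{lem:pindep}) yields that $\tau_\mathfrak{L}$ is computable uniformly in $\mathfrak{L}$, $p$ and $n$. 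The only real subtlety is bookkeeping of the fragments (ensuring that $\pi_{p,n}\vee\exists\underline{x}(\chi\wedge\psi)$ stays inside $\mathsf{F}$, which is exactly where the hypothesis $\exists\mathsf{F}=\mathsf{F}$ is used); everything else is a direct combination of the two preceding lemmas and the parametric statement of Proposition~\ref{prop:coding_param}.
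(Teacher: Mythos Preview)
Your proposal is correct and follows essentially the same construction as the paper: apply $\mathrm{prnx}_{\mathsf{F},\mathfrak{L}}$ to put $\varphi$ in the form $\forall\underline{x}\,\psi$, then define $\tau_{\mathfrak{L}}\varphi$ as $\forall w,z_{1},\ldots,z_{n}\bigl(\pi_{p,n}(\underline{z})\vee\exists\underline{x}(\chi_{p,n,r}(w,\underline{x},\underline{z})\wedge\psi(\underline{x},\underline{u}))\bigr)$, with the fragment membership, equivalence, and uniformity verified exactly as you do. The only cosmetic difference is the order of the outer universal quantifiers ($\forall\underline{z}\,\forall w$ versus $\forall w,\underline{z}$), which is immaterial.
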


\begin{proof}
Let $\mathfrak{L}\supseteq\mathfrak{L}_{\rm ring}$ and 
let $\varphi(\underline{u})\in\Form_{\forall\mathsf{F}}(\mathfrak{L})$.
Again we may assume that $\varphi(\underline{u})$ is of the form
$\forall x_{1},\ldots, x_{r}\psi(\underline{x},\underline{u})$
for some $r$,
with $\psi(\underline{x},\underline{u})\in\Form_{\mathsf{F}}(\mathfrak{L})$ (Remark \ref{rem:prnx}).
Let $\tau_{\mathfrak{L}}\varphi$
be the formula
$$
 \forall w, z_{1},\ldots, z_{n}(\pi_{p,n}(\underline{z})\vee\exists \underline{x}(\chi_{p,n,r}(w,\underline{x},\underline{z})\wedge\psi(\underline{x},\underline{u}))),
$$
with
$\chi_{p,n,r},\pi_{p,n}\in\Form_{\exists}(\mathfrak{L}_{\mathrm{ring}})\subseteq\Form_{\mathsf{F}}(\mathfrak{L})$
as in Lemmas~\ref{lem:chi}
and~\ref{lem:pindep}.
Then
$\tau_{\mathfrak{L}}\varphi\in\Form_{\forall_{n+1}[\mathsf{F}]}(\mathfrak{L})$ is as required.
Uniformity and uniform computability follow as in the proof of Proposition \ref{prop:coding_param}.
\end{proof}

In the special case of function fields in which the constant field is definable, this can be improved further:

\begin{proposition} 
\label{prop:coding_functionfields}
Let $p>0$, let $n\geq 0$,
and let $d\geq 1$.
Let $\mathsf{F}\in\{\exists,\exists\forall,\exists\forall\exists,\dots\}$
and let $\gamma(x)\in\Form_{\exists}(\mathfrak{L}_{\mathrm{ring}})$.
There exists a map
$$
 \tau\colon\Form_{\forall\mathsf{F}}(\mathfrak{L}_{\mathrm{ring}})\rightarrow\Form_{\forall_{d+1}[\mathsf{F}]}(\mathfrak{L}_{\mathrm{ring}})
$$
such that for every 
geometrically integral $\mathbb{F}_p$-variety $X$ of dimension $d$
and every field $k$ with
$\mathrm{char}(k)=p$,
$[k:k^{p}]=p^{n}$,
and $\gamma(k(X))=k$,
we have
$k(X)\models((\varphi\wedge\bigwedge_{i=1}^{m}\gamma(u_i))\leftrightarrow\tau\varphi)$
and
$\mathrm{Var}(\varphi)=\mathrm{Var}(\tau\varphi)$
for all $\varphi(u_{1},\ldots,u_{m})\in\Form_{\forall\mathsf{F}}(\mathfrak{L}_{\mathrm{ring}})$;
in particular
${\rm Th}_{\forall\mathsf{F}}(k(X))=\tau^{-1}({\rm Th}_{\forall_{d+1}[\mathsf{F}]}(k(X)))$.
This $\tau$ is computable uniformly in $p$, $n$, $d$ and $\gamma$.
\end{proposition}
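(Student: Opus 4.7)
The plan is to adapt Proposition~\ref{prop:coding_noparam}, exploiting that $\gamma$ defines $k$ existentially in $k(X)$ to move the ``$k$-part'' of a $p$-basis of $k(X)$ from universal into existential quantifiers. Via Remark~\ref{rem:prnx}, I first write $\varphi(\underline{u})$ as $\forall x_{1},\ldots,x_{r}\psi(\underline{x},\underline{u})$ with $\psi\in\Form_{\mathsf{F}}(\mathfrak{L}_{\rm ring})$. The key geometric input is that since $X$ is geometrically integral of dimension $d$, the extension $k(X)/k$ is regular of transcendence degree $d$; in particular it is separable, $[k(X):k(X)^{p}]=p^{n+d}$, and every separating transcendence basis $\underline{t}$ of $k(X)/k$ together with any $p$-basis $\underline{a}$ of $k$ forms a $p$-basis of $k(X)$.

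Concretely, I propose
$$
\tau\varphi\;:=\;\forall w,t_{1},\ldots,t_{d}\,\Big(\bigwedge_{i}\gamma(u_{i})\wedge\big(\pi'(\underline{t})\vee\exists\underline{a},\underline{x}\,(\bigwedge_{j}\gamma(a_{j})\wedge\chi_{p,n+d,r}(w,\underline{x},\underline{a},\underline{t})\wedge\psi(\underline{x},\underline{u}))\big)\Big),
$$
where $\pi'(\underline{t})\in\Form_{\exists}(\mathfrak{L}_{\rm ring})$ is an existential formula asserting that $\underline{t}$ is $p$-dependent over $k$ in $k(X)$, obtained from $\pi_{p,n+d}$ in Lemma~\ref{lem:pindep} by $\gamma$-guarding its first $n$ arguments. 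All ingredients are existential, so by $\exists\mathsf{F}=\mathsf{F}$ the scope of the outer $d+1$ universal quantifiers lies in $\Form_{\mathsf{F}}(\mathfrak{L}_{\rm ring})$, and hence $\tau\varphi\in\Form_{\forall_{d+1}[\mathsf{F}]}(\mathfrak{L}_{\rm ring})$. Uniform computability in $p,n,d,\gamma$ is inherited from Lemmas~\ref{lem:chi} and~\ref{lem:pindep} and Remark~\ref{rem:prnx}, exactly as in the proofs of the previous two propositions.

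The forward direction $\varphi\wedge\bigwedge_{i}\gamma(u_{i})\Rightarrow\tau\varphi$ is routine: given $w,\underline{t}$, either $\pi'(\underline{t})$ holds, or else one picks any $p$-basis $\underline{a}_{0}$ of $k$, uses the geometric input to conclude that $(\underline{a}_{0},\underline{t})$ is a $p$-basis of $k(X)$, and then uses Lemma~\ref{lem:chi} to find $\underline{x}$ with $\chi_{p,n+d,r}(w,\underline{x},\underline{a}_{0},\underline{t})$; the formula $\varphi$ then supplies $\psi(\underline{x},\underline{u})$. For the reverse direction, given $\underline{x}'\in k(X)^{r}$, one picks a separating transcendence basis $\underline{t}$ of $k(X)/k$ (so $\pi'(\underline{t})$ fails) and $w$ such that $\chi_{p,n+d,r}(w,\underline{x}',\underline{a}_{0},\underline{t})$ for some fixed $p$-basis $\underline{a}_{0}$ of $k$, then applies $\tau\varphi$ at $(w,\underline{t})$ to extract existential witnesses $(\underline{a},\underline{x})$ with $\chi_{p,n+d,r}(w,\underline{x},\underline{a},\underline{t})\wedge\psi(\underline{x},\underline{u})$. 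The delicate step---and main obstacle---is to conclude $\underline{x}=\underline{x}'$ and hence $\psi(\underline{x}',\underline{u})$, since the existentially picked $\underline{a}$ need not coincide with $\underline{a}_{0}$ and different $p$-bases of $k$ a priori produce different surjections $k(X)\to k(X)^{r}$. This requires arranging that the $\underline{x}$-coordinates selected in $\chi_{p,n+d,r}$ are invariant under the choice of $\underline{a}$---for instance by reading them off the canonical intermediate decomposition $k(X)=\bigoplus_{\underline{i}\in\{0,\dots,p-1\}^{d}}(k(X)^{p}\cdot k)\,\underline{t}^{\underline{i}}$---so that $\underline{x}$ is determined by $(w,\underline{t})$ alone; this is where the regularity of $k(X)/k$ furnished by the geometric integrality of $X$ is used in full strength.
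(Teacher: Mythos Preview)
Your proposal has a technical error and a conceptual gap; you have flagged the latter yourself, but your suggested repair does not close it.

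The technical error is in $\pi'$. Existentially quantifying the first $n$ arguments of $\pi_{p,n+d}$ under a $\gamma$-guard produces the condition ``there exist $a_1,\ldots,a_n\in k$ with $(\underline{a},\underline{t})$ $p$-dependent in $k(X)$'', which holds for \emph{every} $\underline{t}$ (take $\underline{a}=0$), so your $\tau\varphi$ collapses to $\bigwedge_i\gamma(u_i)$ and the reverse direction fails outright. What you actually need is that $\underline{t}$ is not a $p$-basis of $K=k(X)$ over $K^{p}k$; the paper expresses this by first existentially defining $K^{p}k$ (via a formula $\eta_1$ built from $\gamma$) and then writing a linear dependence with coefficients constrained to that subfield.

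The conceptual gap is the one you identify, and your proposed fix does not resolve it. Even if you read the $\underline{x}$-coordinates off the canonical decomposition $K=\bigoplus_{\underline{i}}(K^{p}k)\,\underline{t}^{\underline{i}}$ so that they are determined by $(w,\underline{t})$ alone, those coordinates now lie in the \emph{proper} subfield $K^{p}k$; hence $\tau\varphi$ only yields ``$K\models\psi(\underline{x},\underline{u})$ for all $\underline{x}\in(K^{p}k)^{r}$'', which is strictly weaker than $K\models\forall\underline{x}\,\psi(\underline{x},\underline{u})$. The missing idea is to \emph{relativize} $\psi$ to that subfield and then transfer along an isomorphism: the paper defines $K^{p^{\nu}}k$ existentially by a formula $\eta_\nu$, replaces $\psi$ by its relativization $\psi^{\eta_\nu}$, and uses the $k$-isomorphism $K^{p^{\nu}}k\cong_{k}K$ (which is where the geometric integrality of $X$ is actually exploited, via $\mathbb{F}_p(X)^{p^{\nu}}k=K^{p^{\nu}}k$) to conclude that $K^{p^{\nu}}k\models\varphi(\underline{u})$ is equivalent to $K\models\varphi(\underline{u})$ for $\underline{u}\in k^{m}$. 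In effect the paper never builds a definable surjection $K\to K^{r}$; it builds one $K\to(K^{p^{\nu}}k)^{r}$ and moves the whole formula $\psi$ into the target subfield. Your sketch stops just short of this: you see that the $K^{p}k$-decomposition is canonical, but not that $\psi$ must then be interpreted inside $K^{p^{\nu}}k$ rather than in $K$.
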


\begin{proof}
Let $k$ and $\varphi$ be as in the statement, and let $K=k(X)$.
Without loss of generality, $\varphi(\underline{u})$ is of the form
$\forall x_1,\dots, x_r\psi(\underline{x},\underline{u})$
with $\psi\in\Form_{\mathsf{F}}(\mathfrak{L}_{\rm ring})$ (Remark \ref{rem:prnx}).
For every $\nu\geq 1$,
$$
 K^{p^\nu}\;\cong\; K\;=\;\mathbb{F}_p(X)k\;\cong_k\;\mathbb{F}_p(X)^{p^\nu}k\;=\;(\mathbb{F}_p(X)k)^{p^\nu}k\;=\;K^{p^\nu}k
$$
and the degrees are as in the following diagram
(cf.~\cite[V.135 No.~6 Cor.~3]{Bourbaki_Algebra2}):
$$
 \xymatrix{
 \mathbb{F}_p(X)\ar@{-}[rr] & & K\\
 \mathbb{F}_p(X)^{p^\nu}\ar^{p^d}@{-}[u]\ar@{-}[r] & K^{p^\nu}\ar^{p^{n\nu}}@{-}[r]\ar^{p^{(n+d)\nu}}@{-}[ru]& K^{p^\nu}k\ar_{p^{d\nu}}@{-}[u] \\
 \mathbb{F}_p\ar@{-}[u]\ar@{-}[r]&k^{p^\nu}\ar@{-}[u]\ar^{p^{n\nu}}@{-}[r]&k\ar@{-}[u]
 }
$$
In particular, as $[K^{p^\nu}k:K^{p^\nu}]=p^{n\nu}$,
the formula
$$
 \eta_\nu(x) := 
 \exists y_1,\dots,y_{p^{n\nu}},z_1,\dots,z_{p^{n\nu}}\left(\bigwedge_{i=1}^{p^{n\nu}}\gamma(z_i) \wedge x = \sum_{i=1}^{p^{n\nu}}y_i^{p^\nu}z_i\right),
$$
defines $K^{p^\nu}k$ in $K$,
and the formula $\pi'(\underline{z})$ given by
$$
  \exists \lambda_{0,\dots,0},\dots,\lambda_{p-1,\dots,p-1}\bigg(
 \bigwedge_{0\leq i_{1},\ldots,i_{d}<p}\eta_1(\lambda_{i_1,\dots,i_d})\wedge
 \sum_{0\leq i_{1},\ldots,i_{d}<p}\lambda_{i_{1},\ldots,i_{d}}\prod_{j=1}^{d}z_j^{i_{j}}=0\wedge
\bigvee_{0\leq i_{1},\ldots,i_{d}<p}\neg \lambda_{i_{1},\ldots,i_{d}}=0\bigg)
$$
defines the set $d$-tuples that are not $p$-bases of $K$ over $K^{p}k$ (cf.~proof of Lemma \ref{lem:pindep}).
Note that $\eta_\nu,\pi'\in{\rm Form}_{\exists}(\mathfrak{L}_{\rm ring})$.
We write $\psi^{\eta_\nu}$ for $\psi$ with all quantifiers relativized to $\eta_\nu$, 
defined inductively by
$(\exists x\alpha)^{\eta_\nu}=\exists x(\eta_\nu(x)\wedge\alpha^{\eta_\nu})$
and 
$(\forall x\alpha)^{\eta_\nu}=\forall x(\eta_\nu'(x)\vee\alpha^{\eta_\nu})$,
where $\eta_\nu'={\rm prnx}(\neg\eta_\nu)\in\Form_{\forall}(\mathfrak{L}_{\rm ring})$, cf.~Remark \ref{rem:prnx}.
Note that $\psi^{\eta_\nu}\in{\rm Form}_\mathsf{F}(\mathfrak{L}_{\rm ring})$,
as $\exists=\exists\exists$, $\forall\exists=\forall\forall\exists$, etc.
Now fix $\nu$ with $p^{\nu-1}<r\leq p^{\nu}$ and let 
\begin{eqnarray*}
 \tau\varphi(\underline{u}) \;:=\; \forall w,z_1,\dots,z_d\exists \underline{\lambda} \Bigg(\pi'(\underline{z})&\vee&\bigg(w= \sum_{0\leq i_{1},\ldots,i_{d}<p^\nu}\lambda_{i_{1},\ldots,i_{d}}\prod_{j=1}^{d}z_j^{i_{j}}\\
 &&\wedge\bigwedge_{0\leq i_1,\dots,i_d<p^\nu}\eta_\nu(\lambda_{i_1,\dots,i_d})\wedge\bigwedge_{j=1}^m\gamma(u_j) 
 \wedge\psi^{\eta_\nu}(\lambda_{0,\dots,0},\dots,\lambda_{r-1,0,\dots,0},\underline{u})\bigg)\Bigg). 
\end{eqnarray*}
Then $\tau\varphi\in\Form_{\forall_{d+1}[\mathsf{F}]}(\mathfrak{L}_{\rm ring})$,
and
$K\models\tau\varphi(\underline{a})$ if and only if
$a_1,\dots,a_m\in k$
and
$K^{p^\nu}k\models\varphi(\underline{a})$,
which since $K^{p^\nu}k\cong_k K$ is equivalent to $K\models\varphi(\underline{a})$.
\end{proof}

\begin{example}
Let 
$K=\mathbb{F}_p(\!(t)\!)(s)$.
Each of the three propositions in this section gives a 
class of fields $\mathcal{K}$ with $K\in\mathcal{K}$, and a
computable map $\tau$ such that $K'\models(\varphi\leftrightarrow\tau\varphi)$ for every
$K'\in\mathcal{K}$ and every
$\varphi\in{\rm Sent}_{\forall\exists}(\mathfrak{L}_{\rm ring})$, where
\begin{enumerate}[$(a)$]
\item $\tau\varphi\in{\rm Sent}_{\forall_1[\exists]}(\mathfrak{L}_{\rm ring}(s,t))$ in Proposition \ref{prop:coding_param} (with $n=2$),
\item $\tau\varphi\in{\rm Sent}_{\forall_3[\exists]}(\mathfrak{L}_{\rm ring})$ in Proposition \ref{prop:coding_noparam} (with $n=2$), and
\item $\tau\varphi\in{\rm Sent}_{\forall_2[\exists]}(\mathfrak{L}_{\rm ring})$ in Proposition \ref{prop:coding_functionfields} (with $n=1,d=1$; for existence of $\gamma$ see Example \ref{ex:large} below).
\end{enumerate}
\end{example}

\section{Function fields}
\label{sec:ff}

\noindent
We consider the rational function field $k(t)$ in $\mathfrak{L}_{\rm ring}$ as well as in $\mathfrak{L}_{\rm ring}(t)$.
First we study to what extent the theory of $k$ determines the theory of $k(t)$,
and then we look for relations between the $\forall_1\exists$-theory of $k(t)$ and the $\exists$-theory of $(k(t),t)$.
Where possible we consider more generally the function field $k(X)$ of a variety $X$ defined over a subfield $k_0$ of $k$ as a structure in the language $\mathfrak{L}_{\rm ring}(k_0)$ or $\mathfrak{L}_{\rm ring}(k_0(X))$, which we then denote by
$(k(X),k_0)$, respectively $(k(X),k(X))$.
Throughout this section, $k,l,k_0,k'$ denote fields.

\begin{proposition}\label{prop:kt_Et}
If $k,l$ are extensions of $k_0$ with
${\rm Th}_\exists(k,k_0)\subseteq{\rm Th}_\exists(l,k_0)$,
then 
$$
 {\rm Th}_\exists(k(X),k_0(X))\subseteq{\rm Th}_\exists(l(X),k_0(X))
$$
for every geometrically integral $k_0$-variety $X$.
In particular, if $k,l$ are any fields with
${\rm Th}_\exists(k)\subseteq{\rm Th}_\exists(l)$,
then
${\rm Th}_\exists(k(t),t)\subseteq{\rm Th}_\exists(l(t),t)$.    
\end{proposition}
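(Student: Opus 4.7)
The plan is to apply the standard embedding criterion recalled at the start of Section~\ref{sec:criteria}: ${\rm Th}_\exists(M)\subseteq{\rm Th}_\exists(N)$ if and only if there is an embedding of $M$ into an elementary extension of $N$. Applied to the hypothesis, this yields a $k_0$-embedding $\iota\colon k\to l^*$ into some elementary extension $l^*\succcurlyeq l$ in the language $\mathfrak{L}_{\rm ring}(k_0)$; I take $l^*$ to be an ultrapower $l^\mathcal{U}$ of $l$. The goal is to produce from $\iota$ a $k_0(X)$-embedding of $k(X)$ into an elementary extension of $l(X)$ in the language $\mathfrak{L}_{\rm ring}(k_0(X))$.

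Geometric integrality of $X$ over $k_0$ means that $k_0(X)/k_0$ is a regular field extension, so for every field extension $F/k_0$ the tensor product $F\otimes_{k_0}k_0(X)$ is a domain; denote its field of fractions by $F(X)$. By base change, $\iota$ induces an injective $k_0(X)$-algebra map $k\otimes_{k_0}k_0(X)\to l^*\otimes_{k_0}k_0(X)$, giving on passing to fraction fields a $k_0(X)$-embedding $\tilde\iota\colon k(X)\to l^*(X)$.

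To embed $l^*(X)$ in turn over $k_0(X)$ into an elementary extension of $l(X)$, I would construct the natural $k_0(X)$-algebra map
$$
 \phi\colon l^*\otimes_{k_0}k_0(X)\longrightarrow l(X)^\mathcal{U},\qquad [a_i]_i\otimes g\longmapsto[a_i\cdot g]_i,
$$
and extend to fraction fields. The chain $k(X)\hookrightarrow l^*(X)\hookrightarrow l(X)^\mathcal{U}\succcurlyeq l(X)$ then finishes the argument by the embedding criterion. The only technical point is the injectivity of $\phi$: fixing a $k_0$-basis $\{e_\alpha\}$ of $k_0(X)$, every element of $l^*\otimes_{k_0}k_0(X)$ admits a unique expression $\sum_\alpha[a_{\alpha,i}]_i\otimes e_\alpha$, and regularity of $k_0(X)/k_0$ ensures that the $e_\alpha$ remain $l$-linearly independent in $l(X)$, so by {\L}o\'s's theorem the image in $l(X)^\mathcal{U}$ vanishes only when each $[a_{\alpha,i}]_i$ vanishes in $l^*$.

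Finally, the ``in particular'' clause follows by taking $k_0$ to be the common prime field and $X=\mathbb{A}^1_{k_0}$ with coordinate $t$: elements of the prime field are $0$-definable in any field of the appropriate characteristic, so the parameter-free hypothesis ${\rm Th}_\exists(k)\subseteq{\rm Th}_\exists(l)$ transfers to ${\rm Th}_\exists(k,k_0)\subseteq{\rm Th}_\exists(l,k_0)$, and $k_0(X)=k_0(t)$ contains the constant symbol $t$. The main obstacle is the injectivity of $\phi$, which is precisely where geometric integrality (via regularity of $k_0(X)/k_0$) is essential; the rest of the argument is formal.
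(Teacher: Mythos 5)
Your proposal is correct and follows essentially the same route as the paper: embed $k$ into an ultrapower $l^{\mathcal{U}}$, extend to $k(X)\to l^{\mathcal{U}}(X)$ using geometric integrality, and then embed $l^{\mathcal{U}}(X)$ over $k_0(X)$ into $l(X)^{\mathcal{U}}$. The only difference is that you spell out the injectivity of the map $l^{\mathcal{U}}\otimes_{k_0}k_0(X)\to l(X)^{\mathcal{U}}$ via a $k_0$-basis and {\L}o\'s, a point the paper treats as immediate from the inclusion $l\to l(X)$ inducing $l^{\mathcal{U}}\to l(X)^{\mathcal{U}}$.
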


\begin{proof}
Let $\kappa=\max\{|k|,\aleph_0\}$
and let $l^\mathcal{U}$ be a $\kappa^+$-saturated ultrapower of $l$
(so $\mathcal{U}$ is an ultrafilter on some index set $I$, and $l^\mathcal{U}=l^I/\mathcal{U}$).
Then ${\rm Th}_\exists(k,k_0)\subseteq{\rm Th}_\exists(l,k_0)$
implies that there exists an $\mathfrak{L}_{\rm ring}(k_0)$-embedding $k\rightarrow l^\mathcal{U}$ \cite[Lemma 5.2.1]{CK},
which extends uniquely to an $\mathfrak{L}_{\rm ring}(k_0(X))$-embedding $k(X)\rightarrow l^\mathcal{U}(X)$.
The inclusion $l\rightarrow l(X)$ gives rise to an $\mathfrak{L}_{\rm ring}(l)$-embedding
$l^\mathcal{U}\rightarrow l(X)^\mathcal{U}$,
which extends uniquely to an $\mathfrak{L}_{\rm ring}(l(X))$-embedding
$l^\mathcal{U}(X)\rightarrow l(X)^\mathcal{U}$.
Therefore 
$$
 {\rm Th}_\exists(k(X),k_0(X))\subseteq{\rm Th}_\exists(l^\mathcal{U}(X),k_0(X))\subseteq{\rm Th}_\exists(l(X)^\mathcal{U},k_0(X))={\rm Th}_\exists(l(X),k_0(X)).
$$ 
The ``in particular'' part follows,
since if $k,l$ are fields with ${\rm Th}_\exists(k)\subseteq{\rm Th}_\exists(l)$, then in particular ${\rm char}(k)={\rm char}(l)$,
and so without loss of generality they have the same prime field $k_0$, and we can apply the claim to the $k_0$-variety $X=\mathbb{A}^1$.
\end{proof}

\begin{lemma}\label{lem:A1Ekt}
If $k\preccurlyeq_\exists k'$
then
 ${\rm Th}_{\forall_1\exists}(k(t))={\rm Th}_{\forall_1\exists}(k'(t))$.   
\end{lemma}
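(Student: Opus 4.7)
The plan is to apply the equivalence $(a)\Leftrightarrow(f)$ of Lemma~\ref{lem:AnE_abstract} (with $n=1$) separately in each direction. A preliminary observation I would use throughout is that $k\preccurlyeq_{\exists}k'$ forces $k$ to be relatively algebraically closed in $k'$: if some $b\in k'\setminus k$ were algebraic over $k$ with minimal polynomial $f\in k[x]$, then $f$ would be irreducible of degree $\geq 2$ and have no root in $k$, contradicting $\exists x(f(x)=0)\in\mathrm{Th}_{\exists}(k',k)=\mathrm{Th}_{\exists}(k,k)$. In particular, every $b\in k'(t)$ is either in $k$ or transcendental over $k$: elements of $k'(t)\setminus k'$ are transcendental over $k'$ and hence over $k$, while elements of $k'\setminus k$ are transcendental over $k$ by the above.

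For the inclusion $\mathrm{Th}_{\forall_1\exists}(k(t))\subseteq\mathrm{Th}_{\forall_1\exists}(k'(t))$ I would take $M_b=k(t)$ and $N_b=k'(t)$ in condition $(f)$, defining $f_b\colon k(t)\to k'(t)$ to be the inclusion when $b\in k$, and the $k$-embedding determined by $t\mapsto b$ otherwise (well defined by the preliminary observation, since then $b$ is transcendental over $k$). In either case $b\in f_b(k(t))$, so $(f)$ holds and $(f)\Rightarrow(a)$ yields the inclusion.

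For the reverse inclusion one cannot in general embed $k'(t)$ into $k(t)$, so I would enlarge $N=k(t)$ to an ultrapower. From $\mathrm{Th}_{\exists}(k',k)=\mathrm{Th}_{\exists}(k,k)$ one obtains an $\mathfrak{L}_{\rm ring}(k)$-embedding $\iota\colon k'\hookrightarrow k^{\mathcal{U}}$ into some ultrapower of $k$; extending by $t\mapsto t$ gives a field embedding $j\colon k'(t)\hookrightarrow k^{\mathcal{U}}(t)$ that fixes $k(t)$ pointwise. The natural inclusion $k^{\mathcal{U}}(t)\hookrightarrow k(t)^{\mathcal{U}}$ composed with the elementary embedding $k(t)\preccurlyeq k(t)^{\mathcal{U}}$ supplied by {\L}o\'s shows that $k(t)\preccurlyeq_{\exists}k^{\mathcal{U}}(t)$. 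Taking $M_b=k'(t)$, $N_b=k^{\mathcal{U}}(t)$, and $f_b=j$ then verifies condition $(f)$ for every $b\in k(t)$, since $b\in k(t)\subseteq j(k'(t))$. The only substantive step is this ultrapower construction in the reverse direction; the forward direction is essentially cosmetic once one notes that $k$ is relatively algebraically closed in $k'$.
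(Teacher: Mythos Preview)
Your proof is correct and follows essentially the same route as the paper. The forward inclusion is handled identically via condition $(f)$ of Lemma~\ref{lem:AnE_abstract} (the paper invokes regularity of $k'/k$ where you use only relative algebraic closedness, which already suffices). For the reverse inclusion the paper cites \cite[Lemma~7.2]{DDF} for $k(t)\preccurlyeq_\exists k'(t)$ and then applies Lemma~\ref{lem:AE_abstract}; your ultrapower construction is a self-contained proof of exactly this fact, and your use of condition $(f)$ with the constant embedding $f_b=j$ amounts to the same application of Lemma~\ref{lem:AE_abstract}.
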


\begin{proof}
Firstly, $k\preccurlyeq_\exists k'$ implies that
$k(t)\preccurlyeq_\exists k'(t)$,
see \cite[Lemma 7.2]{DDF},
from which it follows that 
$\mathrm{Th}_{\forall\exists}(k'(t))\subseteq\mathrm{Th}_{\forall\exists}(k(t))$
(Lemma \ref{lem:AE_abstract}),
in particular 
$\mathrm{Th}_{\forall_1\exists}(k'(t))\subseteq\mathrm{Th}_{\forall_1\exists}(k(t))$.
For the converse inclusion,
we verify Lemma \ref{lem:AnE_abstract}$(f)$
for $N=k'(t)$, $M=k(t)$ and $n=1$.
So let $b\in k'(t)$.
If $b\in k$,
we extend the inclusion $\iota\colon k\rightarrow k'$
to $\iota'\colon k(t)\rightarrow k'(t)$ by $\iota'(t)=t$,
and note that $b$ is in the image.
If $b\notin k$ then,
as both $k'/k$ (by e.g.~\cite[Corollary 3.1.3]{Ershov}) and $k'(t)/k'$ are regular, $b$ is transcendental over $k$, 
so we may extend $\iota$ to an embedding
$\iota''\colon k(t)\rightarrow k'(t)$
with $\iota''(t)=b$.
So $(f)$ holds with $N_b=N$ and $M_b=M$.
By $(f)\Rightarrow(a)$ of Lemma~\ref{lem:AnE_abstract}, 
we have $\mathrm{Th}_{\forall_{1}\exists}(k(t))\subseteq\mathrm{Th}_{\forall_{1}\exists}(k'(t))$.
\end{proof}

\begin{proposition}\label{prop:kt_A1E}
 If ${\rm Th}_{\forall_1\exists}(k)\subseteq{\rm Th}_{\forall_1\exists}(l)$, then
 ${\rm Th}_{\forall_1\exists}(k(t))\subseteq{\rm Th}_{\forall_1\exists}(l(t))$.
\end{proposition}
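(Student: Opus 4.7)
The plan is to verify criterion~$(f)$ of Lemma~\ref{lem:AnE_abstract} with $n=1$, $M=k(t)$, $N=l(t)$, and then invoke the implication $(f)\Rightarrow(a)$. Thus for each $b\in l(t)$ I need to produce $M_b\equiv_{\forall_1\exists}k(t)$, an existential extension $N_b\succcurlyeq_\exists l(t)$, and an $\mathfrak{L}_{\rm ring}$-embedding $f_b\colon M_b\to N_b$ with $b\in f_b(M_b)$.

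First, I would feed the hypothesis into Lemma~\ref{lem:AnE_abstract}$(a)\Rightarrow(d)$ applied to the fields $k$ and $l$: there is an elementary extension $l^*\succcurlyeq l$ such that for every $c\in l^*$ there exist $k_c\succcurlyeq k$ and an embedding $g_c\colon k_c\to l^*$ with $c\in g_c(k_c)$. Restricting any such $g_c$ to $k$ in particular yields an embedding $\iota\colon k\to l^*$. Throughout the argument I take $N_b:=l^*(t)$; by \cite[Lemma~7.2]{DDF} this is an existential extension of $l(t)$.

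I then distinguish two cases. In Case~A, $b\in l\subseteq l^*$: apply the above to $c=b$ to obtain $k_b\succcurlyeq k$ and $g_b\colon k_b\to l^*$ with $b\in g_b(k_b)$, and extend $g_b$ to an embedding $k_b(t)\to l^*(t)$ via $t\mapsto t$. Take $M_b:=k_b(t)$; since $k\preccurlyeq_\exists k_b$, Lemma~\ref{lem:A1Ekt} gives $M_b\equiv_{\forall_1\exists}k(t)$, and $b$ lies in the image. In Case~B, $b\in l(t)\setminus l$: then $b$ is a non-constant rational function over $l$, so its reduced representation also shows $b\in l^*(t)\setminus l^*$. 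Since $l^*$ is algebraically closed in the purely transcendental extension $l^*(t)$, the element $b$ is transcendental over $l^*$, and in particular over $\iota(k)$. I therefore extend $\iota$ to an embedding $f_b\colon k(t)\to l^*(t)$ with $f_b(t)=b$, taking $M_b:=k(t)$.

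The only delicate point is Case~B: one must ensure $b$ is transcendental over $\iota(k)$. This reduces to the fact that $l^*\cap l(t)=l$ inside $l^*(t)$, which is immediate from comparing reduced representations (any common element would simultaneously be a constant over $l^*$ and a non-constant rational function over $l$, forcing it into $l$). With both cases in hand, $(f)\Rightarrow(a)$ of Lemma~\ref{lem:AnE_abstract} gives the desired conclusion.
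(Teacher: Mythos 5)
Your proof is correct and follows essentially the same route as the paper's: both obtain $l^*\succcurlyeq l$ from Lemma~\ref{lem:AnE_abstract}$(d)$ applied to $k$ and $l$, split on whether $b$ is a constant, and combine Lemma~\ref{lem:A1Ekt} with the transcendence of non-constants over $l^*$. The only (immaterial) organizational difference is that you verify criterion $(f)$ directly for $N=l(t)$ with $N_b=l^*(t)\succcurlyeq_\exists l(t)$, whereas the paper verifies it for $N=l^*(t)$ with $N_b=N$ and then transfers the conclusion along $l^*(t)\equiv_{\forall_1\exists}l(t)$.
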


\begin{proof}
From ${\rm Th}_{\forall_1\exists}(k)\subseteq{\rm Th}_{\forall_1\exists}(l)$
it follows by Lemma \ref{lem:AnE_abstract}$(d)$ that there exists $l^{*}\succcurlyeq l$
such that for every $b\in l^{*}$ there exist $k_{b}\succcurlyeq k$ and an embedding
$f_{b}\colon k_{b}\rightarrow l^{*}$ 
with $b\in f_b(k_b)$.
By Lemma \ref{lem:A1Ekt}
$\mathrm{Th}_{\forall_{1}\exists}(l^*(t))=\mathrm{Th}_{\forall_{1}\exists}(l(t))$.

To see that
${\rm Th}_{\forall_1\exists}(k(t))\subseteq{\rm Th}_{\forall_1\exists}(l^*(t))$,
we verify Lemma \ref{lem:AnE_abstract}$(f)$ for $M=k(t)$, $N=l^*(t)$ and $n=1$.
So let $b\in l^{*}(t)$.
If $b\in l^{*}$ then
we extend $f_{b}$
to an embedding
$f_{b}'\colon k_{b}(t)\rightarrow l^{*}(t)$
by $f_b'(t)=t$.
In this case we choose $M_{b}=k_{b}(t)$ and $N_{b}=N$.
If on the other hand
$b\notin l^{*}$
then it is transcendental over $l^{*}$,
in particular over $f_{1}(k_1)$.
We extend $f_{1}$ to an embedding
$f_{1}''\colon k_{1}(t)\rightarrow l^{*}(t)$
by $f_1''(t)=b$.
In this case we choose $M_{b}=k_{1}(t)$ and $N_{b}=N$.
Note that in both cases
$M_{b}\equiv_{\forall_{1}\exists}M$
by Lemma \ref{lem:A1Ekt}.
This verifies Lemma \ref{lem:AnE_abstract}$(f)$, 
and the result now follows from the implication 
$(f)\Rightarrow(a)$
of Lemma \ref{lem:AnE_abstract}.
\end{proof}

\begin{remark}
In particular, if $\mathbb{R}\preccurlyeq\mathbb{R}^*$ is a proper elementary extension,
then 
${\rm Th}_\exists(\mathbb{R}(t),t)={\rm Th}_\exists(\mathbb{R}^*(t),t)$ (Proposition \ref{prop:kt_Et}) and
${\rm Th}_{\forall_1\exists}(\mathbb{R}(t))={\rm Th}_{\forall_1\exists}(\mathbb{R}^*(t))$ (Proposition \ref{prop:kt_A1E}), but
one can show that 
${\rm Th}_{\forall_1\exists}(\mathbb{R}(t),t)\neq{\rm Th}_{\forall_1\exists}(\mathbb{R}^*(t),t)$,
${\rm Th}_{\forall^2\exists}(\mathbb{R}(t))\neq {\rm Th}_{\forall^2\exists}(\mathbb{R}^*(t))$
and
${\rm Th}_{\forall_3\exists}(\mathbb{R}(t))\neq{\rm Th}_{\forall_3\exists}(\mathbb{R}^*(t))$,
as will appear in \cite{Vollprecht}.
We do not know whether
${\rm Th}_{\forall_2\exists}(\mathbb{R}(t))={\rm Th}_{\forall_2\exists}(\mathbb{R}^*(t))$.

Suppose now that $k,l$ are perfect of characteristic $p>0$.
By Proposition \ref{prop:coding_param},
if ${\rm Th}_{\forall_1\exists}(k(t),t)={\rm Th}_{\forall_1\exists}(l(t),t)$,
then
${\rm Th}_{\forall\exists}(k(t),t)={\rm Th}_{\forall\exists}(l(t),t)$,
and if 
${\rm Th}_{\forall_1\exists}(k(t),t)$
is decidable, then so is
${\rm Th}_{\forall\exists}(k(t),t)$.
Similarly,
by Proposition \ref{prop:coding_noparam},
if ${\rm Th}_{\forall_2\exists}(k(t))={\rm Th}_{\forall_2\exists}(l(t))$,
then
${\rm Th}_{\forall\exists}(k(t))={\rm Th}_{\forall\exists}(l(t))$,
and decidability of
${\rm Th}_{\forall_2\exists}(k(t))$
implies decidability of
 ${\rm Th}_{\forall\exists}(k(t))$.
\end{remark}

\begin{lemma}\label{lem:EndAut}
For any field $k$,
$\{\sigma(t) : \sigma\in{\rm End}(k(t)/k)\} = k(t)\setminus k$.
\end{lemma}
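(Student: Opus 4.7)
The plan is to prove the two inclusions separately, with the crucial ingredient in both directions being that $k$ is relatively algebraically closed in $k(t)$ (equivalently, $k(t)/k$ is a regular extension, as already invoked in the proof of Lemma~\ref{lem:A1Ekt}).

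For the inclusion $\{\sigma(t):\sigma\in\mathrm{End}(k(t)/k)\}\subseteq k(t)\setminus k$, I would first note that any $\sigma\in\mathrm{End}(k(t)/k)$ is a ring homomorphism out of a field with $\sigma(1)=1$, hence injective. If it were the case that $\sigma(t)=a\in k$, then $\sigma(t-a)=0$ with $t-a\neq 0$, contradicting injectivity. So $\sigma(t)\in k(t)\setminus k$.

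For the reverse inclusion, I would take $b\in k(t)\setminus k$ and build an embedding $k(t)\to k(t)$ sending $t\mapsto b$. The key step is to observe that since $k(t)/k$ is regular, $k$ is algebraically closed in $k(t)$, so $b\notin k$ forces $b$ to be transcendental over $k$. Therefore the $k$-algebra homomorphism $k[t]\to k(t)$ determined by $t\mapsto b$ is injective, and it extends uniquely to the fraction field $k(t)$, giving the desired $\sigma\in\mathrm{End}(k(t)/k)$ with $\sigma(t)=b$.

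There is no real obstacle here; the only subtle point is the appeal to the relative algebraic closedness of $k$ in $k(t)$, which however is a standard fact (and is anyway already in use earlier in the section).
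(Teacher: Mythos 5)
Your proof is correct and follows essentially the same route as the paper's: injectivity of field endomorphisms fixing $k$ for the inclusion $\subseteq$, and the fact that every element of $k(t)\setminus k$ is transcendental over $k$ (i.e.\ $k$ is relatively algebraically closed in $k(t)$) to extend $t\mapsto b$ to an embedding for the reverse inclusion. Nothing to add.
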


\begin{proof}
Every $\sigma\in{\rm End}(k(t)/k)$ is injective,
so $\sigma^{-1}(k)=k$ and therefore $\sigma(t)\in k(t)\setminus k$.
Conversely, every $s\in k(t)/k$ is transcendental over $k$, so 
$\sigma|_k={\rm id}_k$, $\sigma(t)=s$
defines an isomorphism
$\sigma\colon k(t)\rightarrow k(s)$.
\end{proof}

\begin{proposition}\label{prop:rat_ff_Treda}
Let $q\in\mathbb{N}\cup\{\infty\}$.
There exists a map 
$$
 \tau\colon {\rm Form}_{\forall_1[\exists]}(\mathfrak{L}_{\rm ring})\rightarrow{\rm Form}_{\exists}(\mathfrak{L}_{\rm ring}(t))\times{\rm Form}_{\forall_1[\exists]}(\mathfrak{L}_{\rm ring})
$$
such that if $k$ is a field with $\#k=q$,
$\varphi\in{\rm Form}_{\forall_1[\exists]}(\mathfrak{L}_{\rm ring})$
and $\tau\varphi=(\psi_1,\psi_2)$, 
then 
${\rm Var}(\psi_1)={\rm Var}(\psi_2)={\rm Var}(\varphi)$ and
$$
 \varphi(k(t))\cap k^n = \psi_1((k(t),t))\cap\psi_2(k).
$$ 
In particular,
for $\varphi\in{\rm Sent}_{\forall_1[\exists]}(\mathfrak{L}_{\rm ring})$,
$$
 k(t)\models\varphi\quad\mbox{ if and only if }\quad
 (k(t),t)\models\psi_1\mbox{ and }k\models\psi_2.
$$ 
The map $\tau$ is computable, uniformly in $q$.
\end{proposition}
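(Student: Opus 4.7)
The plan is to split $\varphi\in{\rm Form}_{\forall_{1}[\exists]}(\mathfrak{L}_{\rm ring})$ by its leading quantifier and, when $\varphi$ is genuinely $\forall_{1}[\exists]$, to split the universal quantifier over $k(t)$ according to whether the quantified element lies in $k$ or is transcendental over $k$. If $\varphi\in{\rm Form}_{\exists}(\mathfrak{L}_{\rm ring})$ I would simply set $\tau\varphi:=(\varphi,\bigwedge_{i}u_{i}=u_{i})$, so from now on assume $\varphi(\underline{u})=\forall x\,\psi'(x,\underline{u})$ with $\psi'\in{\rm Form}_{\exists}(\mathfrak{L}_{\rm ring})$. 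For $\underline{a}\in k^{n}$, the condition $k(t)\models\varphi(\underline{a})$ is the conjunction of (i) $k(t)\models\psi'(b,\underline{a})$ for every $b\in k(t)\setminus k$, and (ii) $k(t)\models\psi'(b,\underline{a})$ for every $b\in k$. My plan is to route (i) into $\psi_{1}$ and to handle (ii) according to the value of $q$.

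For (i) I would apply Lemma~\ref{lem:EndAut}: every transcendental $b\in k(t)\setminus k$ is of the form $\sigma(t)$ for some $\sigma\in{\rm End}(k(t)/k)$. Since $\sigma$ fixes $\underline{a}$ pointwise and $\psi'$ is existential, hence preserved under the embedding $\sigma\colon k(t)\hookrightarrow k(t)$, applying $\sigma$ to a witnessing tuple for $\psi'(t,\underline{a})$ produces one for $\psi'(b,\underline{a})$; the converse direction follows by instantiating with $b=t$. Thus (i) collapses to the single assertion $(k(t),t)\models\psi'(t,\underline{a})$, which is an existential $\mathfrak{L}_{\rm ring}(t)$-formula in $\underline{u}$ and will be a conjunct of $\psi_{1}$.

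For (ii) the two cases $q\in\mathbb{N}$ and $q=\infty$ require different tactics. If $q\in\mathbb{N}$, then $k=\mathbb{F}_{q}$ is algebraically closed in $\mathbb{F}_{q}(t)$, so the $q$ elements of $k$ are precisely the distinct roots of $x^{q}-x$ inside $k(t)$, and (ii) is captured by the existential $\mathfrak{L}_{\rm ring}$-formula
$$
\exists y_{1},\dots,y_{q}\Bigl(\bigwedge_{i=1}^{q}y_{i}^{q}=y_{i}\;\wedge\bigwedge_{1\leq i<j\leq q}\neg(y_{i}=y_{j})\;\wedge\bigwedge_{i=1}^{q}\psi'(y_{i},\underline{u})\Bigr),
$$
which I would conjoin with $\psi'(t,\underline{u})$ to form $\psi_{1}$, taking $\psi_{2}:=\bigwedge_{i}u_{i}=u_{i}$. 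If $q=\infty$, I would argue by specialisation: writing a witnessing tuple $\underline{y}\in k(t)^{m}$ for $\psi'(b,\underline{a})$ (with $b,\underline{a}\in k$) coordinatewise as rational functions in $t$ over $k$, and choosing $\alpha\in k$ outside the finite locus where any denominator or quantifier-free inequation clause vanishes, the specialisation $\underline{y}(\alpha)\in k^{m}$ still satisfies the quantifier-free core of $\psi'$, giving $k\models\psi'(b,\underline{a})$; the converse is immediate from $k\subseteq k(t)$. Hence (ii) becomes $k\models\forall x\,\psi'(x,\underline{a})$, and I would take $\psi_{1}:=\psi'(t,\underline{u})$ and $\psi_{2}:=\forall x\,\psi'(x,\underline{u})$. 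The hard step is this specialisation for $q=\infty$, which crucially uses infiniteness of $k$ to dodge the finite bad locus; once it is in place, uniform computability of $\tau$ in $q$ follows from the explicit branching between the two constructions.
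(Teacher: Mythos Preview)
Your proposal is correct and follows essentially the same approach as the paper: the same case split on $q$, the same formulas $\psi_1,\psi_2$ (up to $\top(\underline u)$ versus $\bigwedge_i u_i=u_i$), and the same use of Lemma~\ref{lem:EndAut} for the transcendental range of the universal quantifier. The only difference is that where the paper cites the known fact $k\preccurlyeq_\exists k(t)$ for infinite $k$ and invokes Lemma~\ref{lem:AE_abstract}, you instead give the standard specialisation argument that proves this fact directly; this is an inline unpacking rather than a different route.
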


\begin{proof}
Let $\varphi=\forall x\psi$ with $\psi(x,\underline{u})\in{\rm Form}_\exists(\mathfrak{L}_{\rm ring})$.
If $q=\infty$,
define $\tau\varphi=(\psi(t,\underline{u}),\varphi)$, and if $q<\infty$ define
$\tau\varphi=(\varphi_q,\top(\underline{u}))$,
where 
$$
 \varphi_q := \psi(t,\underline{u})\wedge\exists y_1,\dots,y_q\Big(\bigwedge_i y_i^q-y_i=0\wedge\bigwedge_{i\neq j}y_i\neq y_j\wedge\bigwedge_i\psi(y_i,\underline{u})\Big).
$$
To prove the claim, let $k$ be a field with $q=\#k$, and let $\tau\varphi=(\psi_1,\psi_2)$.
First let $\underline{a}\in\varphi(k(t))\cap k^n$, i.e.~$\underline{a}\in k^n$
and for every $b\in k(t)$, $k(t)\models\psi(b,\underline{a})$.
Then in particular $k(t)\models\psi(t,\underline{a})$,
and if $q<\infty$, then also $k(t)\models\psi(b,\underline{a})$ for every $b\in k$, i.e.~for the zeros of $Y^q-Y$.
So $\underline{a}\in\psi_1((k(t),t))$ in all cases,
and if $q=\infty$, then $k\preccurlyeq_\exists k(t)$, see e.g.~\cite[Example 3.1.2]{Ershov}, and so $k(t)\models\varphi(\underline{a})$
implies $k\models\varphi(\underline{a})$ (Lemma \ref{lem:AE_abstract}),
and therefore $\underline{a}\in\psi_2(k)$ in all cases.

Conversely, let $\underline{a}\in\psi_1((k(t),t))\cap\psi_2(k)$.
Since $k(t)\models\psi(t,\underline{a})$,
it follows that $k(t)\models\psi(b,\underline{a})$
for every $b\in k(t)\setminus k$: Indeed,
by Lemma \ref{lem:EndAut} there exists a $\sigma_b\in{\rm End}(k(t)/k)$ with $\sigma_b(t)=b$,
hence $\sigma_b(k(t))\models\psi(b,\underline{a})$
and so, since $\sigma_b(k(t))\leq k(t)$ and $\psi$ is an existential formula,
$k(t)\models\psi(b,\underline{a})$.
If $q<\infty$, then $k(t)\models\psi_1(\underline{a})$ also implies that $k(t)\models\psi(b,\underline{a})$
for every $b\in k$,
and if $q=\infty$, then $k\models\psi_2(\underline{a})$ gives that
$k\models\psi(b,\underline{a})$ for every $b\in k$,
and so, since $k\leq k(t)$ and $\psi$ is an existential formula,
$k(t)\models\psi(b,\underline{a})$ for every $b\in k$.
This shows that in all cases $k(t)\models\psi(b,\underline{a})$ for every $b\in k(t)$,
i.e.~$\underline{a}\in\varphi(k(t))$.
\end{proof}

\begin{proposition}\label{prop:rat_ff_Tredb}
Let $\gamma(x)\in{\rm Form}_{\exists}(\mathfrak{L}_{\rm ring})$.
There exists a map
$$
 \tau\colon {\rm Form}_{\exists}(\mathfrak{L}_{\rm ring}(t))\rightarrow {\rm Form}_{\forall_1[\exists]}(\mathfrak{L}_{\rm ring})
$$
such that if 
$\varphi(\underline{u})\in{\rm Form}_{\exists}(\mathfrak{L}_{\rm ring}(t))$
and $k$ is a field with
$k\subseteq\gamma(k(t))\subseteq k(t)\setminus\{t\}$,
then ${\rm Var}(\tau\varphi)={\rm Var}(\varphi)$ and
$$
 \varphi((k(t),t)) \cap k^n = \tau\varphi(k(t))\cap k^n.
$$
In particular,  
${\rm Th}_{\exists}(k(t),t)=\tau^{-1}({\rm Th}_{\forall_1[\exists]}(k(t)))$.
The map $\tau$ is computable uniformly in $\gamma$.
\end{proposition}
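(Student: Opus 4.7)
The plan is to eliminate the constant symbol $t$ from $\varphi$ by replacing it with a universally quantified variable, using $\gamma$ to ``exclude'' those values for which the substitution is not harmless. Concretely, given $\varphi(\underline{u}) \in {\rm Form}_\exists(\mathfrak{L}_{\rm ring}(t))$, let $\varphi'(\underline{u}, s) \in {\rm Form}_\exists(\mathfrak{L}_{\rm ring})$ be obtained by replacing each occurrence of the constant $t$ by a fresh variable $s$, and set
$$
 \tau\varphi(\underline{u}) \;:=\; \forall s\bigl(\gamma(s) \vee \varphi'(\underline{u}, s)\bigr).
$$
Since $\gamma \in {\rm Form}_\exists(\mathfrak{L}_{\rm ring})$ and $\varphi' \in {\rm Form}_\exists(\mathfrak{L}_{\rm ring})$, we have $\tau\varphi \in {\rm Form}_{\forall_1[\exists]}(\mathfrak{L}_{\rm ring})$, with the same free variables as $\varphi$, and the construction is clearly computable uniformly in $\gamma$.

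The verification then splits into the two inclusions, with the interesting direction relying on the self-embeddings of $k(t)$ given by Lemma~\ref{lem:EndAut}. Fix $\underline{a} \in k^n$. For the ``$\supseteq$'' direction, if $k(t) \models \tau\varphi(\underline{a})$, instantiate the universal quantifier at $s = t$. By hypothesis $\gamma(k(t)) \subseteq k(t) \setminus \{t\}$, so $k(t) \not\models \gamma(t)$, forcing $k(t) \models \varphi'(\underline{a}, t)$, which by construction means $(k(t),t) \models \varphi(\underline{a})$.

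For the ``$\subseteq$'' direction, assume $(k(t),t) \models \varphi(\underline{a})$ and fix $s \in k(t)$. If $k(t) \models \gamma(s)$ the first disjunct holds. Otherwise $s \notin \gamma(k(t))$, and since $k \subseteq \gamma(k(t))$ we get $s \in k(t) \setminus k$; by Lemma~\ref{lem:EndAut} there is $\sigma_s \in {\rm End}(k(t)/k)$ with $\sigma_s(t) = s$. Because $\varphi$ is existential and $\sigma_s(\underline{a}) = \underline{a}$, pulling back along $\sigma_s$ gives $\sigma_s(k(t)) \models \varphi'(\underline{a}, s)$, and since $\sigma_s(k(t)) \leq k(t)$ and existential formulas pass up under embeddings, we conclude $k(t) \models \varphi'(\underline{a}, s)$. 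The ``in particular'' clause for ${\rm Th}_\exists(k(t), t)$ is the case $n = 0$.

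No step looks genuinely difficult here; the only subtle point is making sure $\gamma$ is used with the right polarity, i.e.\ as a \emph{disjunct} rather than an implication hypothesis, so that the overall formula stays in $\forall_1[\exists]$ (rather than $\forall_1[\exists\forall]$, which would require negating $\gamma$). This is precisely what the assumption $\gamma(k(t)) \subseteq k(t) \setminus \{t\}$ enables, and it is also the reason we need $k \subseteq \gamma(k(t))$: without it, elements $s \in k \setminus \gamma(k(t))$ would be algebraic over $k$ and Lemma~\ref{lem:EndAut} would not supply the required endomorphism.
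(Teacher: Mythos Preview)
Your proof is correct and essentially identical to the paper's: the paper writes $\varphi(\underline{u})=\psi(\underline{u},t)$ and sets $\tau\varphi:=\forall x(\psi(\underline{u},x)\vee\gamma(x))$, verifying the two inclusions via Lemma~\ref{lem:EndAut} exactly as you do. The only cosmetic difference is that in the ``$\subseteq$'' direction the paper splits cases on whether $b\in k$ or $b\in k(t)\setminus k$, whereas you split on whether $\gamma(s)$ holds; these are the same argument reorganized.
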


\begin{proof}
Write $\varphi(\underline{u})=\psi(\underline{u},t)$ with $\psi\in{\rm Form}_\exists(\mathfrak{L}_{\rm ring})$ and define
$\tau\varphi:=\forall x(\psi(\underline{u},x)\vee\gamma(x))$.    
To see that this satisfies the claim,
first let $\underline{a}\in\varphi((k(t),t))\cap k^n$,
so $k(t)\models\psi(\underline{a},t)$.
As in the previous proof, by Lemma \ref{lem:EndAut} this implies that
$k(t)\models\psi(\underline{a},b)$ for every $b\in k(t)\setminus k$.
Since $k(t)\models\gamma(b)$ for every $b\in k$, it follows that $\underline{a}\in\tau\varphi(k(t))$.
Conversely, if $\underline{a}\in\tau\varphi(k(t))\cap k^n$, then
in particular $k(t)\models\psi(\underline{a},t)\vee\gamma(t)$, so $t\notin\gamma(k(t))$ shows that
$\underline{a}\in\varphi((k(t),t))$.
\end{proof}

\begin{example}\label{ex:large}
The condition that 
there exists $\gamma(x)\in{\rm Form}_\exists(\mathfrak{L}_{\rm ring})$
with
$k\subseteq \gamma(k(t))\subseteq k(t)\setminus\{t\}$
holds in particular
whenever $k$ is perfect of characteristic $p>0$,
as then we can take 
$\gamma(x)=\exists y(x=y^p)$,
which defines $k(t)^p$.
It also holds whenever
$k$ is existentially $\emptyset$-definable in $k(t)$,
which is trivially satisfied for $k$ finite,
and by \cite[Theorem 2]{Koenigsmann2002} also for any $k$ that is {\em large} in the sense that $k\preccurlyeq_\exists k(\!(t)\!)$.
\end{example}

\begin{corollary}\label{cor:kt_AE1_perfectlarge}
Let $k$ be a field of positive characteristic,
not containing the algebraic closure of the prime field.
If $k$ is perfect or large,
then ${\rm Th}_{\forall_1\exists}(k(t))$ is undecidable.
\end{corollary}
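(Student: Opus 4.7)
The plan is to combine Proposition \ref{prop:rat_ff_Tredb} with the known undecidability of ${\rm Th}_{\exists}(k(t),t)$ for the fields $k$ at hand.

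First, I would verify the hypothesis of Proposition \ref{prop:rat_ff_Tredb}, namely the existence of $\gamma(x) \in {\rm Form}_\exists(\mathfrak{L}_{\rm ring})$ with $k \subseteq \gamma(k(t)) \subseteq k(t) \setminus \{t\}$. This is precisely what Example \ref{ex:large} provides under either hypothesis on $k$: when $k$ is perfect of characteristic $p$, take $\gamma(x) := \exists y\,(x = y^p)$, which defines $k(t)^p$ --- this contains $k$ since $k$ is perfect, and excludes $t$ since $t$ has $t$-adic valuation $1 \notin p\mathbb{Z}$; when $k$ is large, the required $\gamma$ comes from the existential definition of $k$ in $k(t)$ due to Koenigsmann \cite{Koenigsmann2002}.

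Applying Proposition \ref{prop:rat_ff_Tredb} then produces a computable map $\tau$ realising the many-one reduction
\[
 {\rm Th}_\exists(k(t),t) \;=\; \tau^{-1}\bigl({\rm Th}_{\forall_1[\exists]}(k(t))\bigr).
\]
Since ${\rm Form}_{\forall_1[\exists]}(\mathfrak{L}_{\rm ring})$ is a decidable subset of ${\rm Form}_{\forall_1\exists}(\mathfrak{L}_{\rm ring})$ (both being computable fragments), decidability of ${\rm Th}_{\forall_1\exists}(k(t))$ would entail decidability of ${\rm Th}_{\forall_1[\exists]}(k(t))$, and hence, via $\tau$, decidability of ${\rm Th}_\exists(k(t),t)$.

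It therefore suffices to invoke the known undecidability of ${\rm Th}_\exists(k(t),t)$ whenever $k$ has positive characteristic and does not contain the algebraic closure of its prime field. The case $k=\mathbb{F}_q$ is Pheidas's theorem \cite{Pheidas}, which has since been extended by later authors to cover the full generality needed here. The main obstacle in writing out the argument is pinpointing the cleanest reference for the undecidability of ${\rm Th}_\exists(k(t),t)$ at this level of generality; given such a reference, everything else is an immediate application of the reductions already developed in this section.
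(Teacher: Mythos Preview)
Your proposal is correct and follows exactly the paper's approach: apply Proposition~\ref{prop:rat_ff_Tredb} via Example~\ref{ex:large} to reduce ${\rm Th}_\exists(k(t),t)$ to ${\rm Th}_{\forall_1\exists}(k(t))$, then invoke the known undecidability of the former. The reference you are looking for is Eisentr\"ager--Shlapentokh~\cite{ES17}, which the paper cites as giving the undecidability of ${\rm Th}_\exists(k(t),t)$ for any $k$ of positive characteristic not containing the algebraic closure of the prime field.
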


\begin{proof}
It is known that ${\rm Th}_\exists(k(t),t)$ is undecidable
whenever $k$ is of positive characteristic not containing the algebraic closure of the prime field.
For example this is implicit in \cite{ES17}.
By Proposition \ref{prop:rat_ff_Tredb}
and Example~\ref{ex:large},
if $k$ is in addition perfect or large, it follows that also ${\rm Th}_{\forall_1\exists}(k(t))$ is undecidable.
\end{proof}

\begin{corollary}
For $k$ algebraically closed, real closed, $p$-adically closed or finite,
${\rm Th}_\exists(k(t),t)$
is many-one equivalent to
${\rm Th}_{\forall_1\exists}(k(t))$.
\end{corollary}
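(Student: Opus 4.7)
The plan is to prove the two reductions ${\rm Th}_\exists(k(t),t) \leq_m {\rm Th}_{\forall_1\exists}(k(t))$ and ${\rm Th}_{\forall_1\exists}(k(t)) \leq_m {\rm Th}_\exists(k(t),t)$ separately, using Propositions \ref{prop:rat_ff_Tredb} and \ref{prop:rat_ff_Treda} respectively.

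For the first reduction, I would apply Proposition \ref{prop:rat_ff_Tredb}, which requires an existential formula $\gamma$ with $k \subseteq \gamma(k(t)) \subseteq k(t)\setminus\{t\}$. As discussed in Example \ref{ex:large}, such a $\gamma$ exists whenever $k$ is existentially $\emptyset$-definable in $k(t)$; this holds trivially for $k$ finite and, via the results recalled in that example, for $k$ large --- in particular for $k$ algebraically closed, real closed, or $p$-adically closed, all of which are large. The proposition then yields a computable map $\tau$ with ${\rm Th}_\exists(k(t),t) = \tau^{-1}({\rm Th}_{\forall_1[\exists]}(k(t)))$, and since $\Sent_{\forall_1[\exists]}(\mathfrak{L}_{\rm ring}) \subseteq \Sent_{\forall_1\exists}(\mathfrak{L}_{\rm ring})$, the map acting as the identity on G\"odel numbers of $\forall_1[\exists]$-sentences (and sending all other numbers to a fixed non-theorem) reduces ${\rm Th}_{\forall_1[\exists]}(k(t))$ to ${\rm Th}_{\forall_1\exists}(k(t))$, giving the desired many-one reduction.

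For the converse reduction I would apply Proposition \ref{prop:rat_ff_Treda}. It produces, for each $\varphi \in \Sent_{\forall_1[\exists]}(\mathfrak{L}_{\rm ring})$, a pair $(\psi_1,\psi_2)$ with $\psi_1 \in \Sent_\exists(\mathfrak{L}_{\rm ring}(t))$ and $\psi_2 \in \Sent_{\forall_1[\exists]}(\mathfrak{L}_{\rm ring})$ such that $k(t) \models \varphi$ if and only if $(k(t),t) \models \psi_1$ and $k \models \psi_2$. In all four cases the complete theory ${\rm Th}(k)$ is decidable --- by Tarski for algebraically closed and real closed fields, by Macintyre for $p$-adically closed fields, and trivially for finite $k$ (for finite $k$ we also have $\psi_{2}=\top$ directly from the construction) --- so we can effectively determine the truth value of $k \models \psi_2$. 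Mapping $\varphi$ to $\psi_1$ if $k \models \psi_2$ and to a fixed false existential sentence otherwise yields a many-one reduction ${\rm Th}_{\forall_1[\exists]}(k(t)) \leq_m {\rm Th}_\exists(k(t),t)$.

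To promote this to ${\rm Th}_{\forall_1\exists}(k(t)) \leq_m {\rm Th}_\exists(k(t),t)$, I would use that any $\varphi \in \Sent_{\forall_1\exists}(\mathfrak{L}_{\rm ring})$ can be written computably as a positive boolean combination $B(\varphi_1,\ldots,\varphi_n)$ of sentences $\varphi_i \in \Sent_{\forall_1[\exists]}$. Computing the pairs $(\psi_{1,i},\psi_{2,i})=\tau\varphi_{i}$ from Proposition~\ref{prop:rat_ff_Treda} and the truth values $b_i := (k \models \psi_{2,i})$, I substitute into $B$ to replace each $\varphi_i$ by $\psi_{1,i}$ when $b_i$ is true and by $\bot$ when $b_i$ is false, obtaining a positive boolean combination of existential $\mathfrak{L}_{\rm ring}(t)$-sentences, which collapses to a single existential sentence $\psi$. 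Then $\varphi \in {\rm Th}_{\forall_1\exists}(k(t))$ if and only if $\psi \in {\rm Th}_\exists(k(t),t)$. The main step requiring care is precisely this passage from $\forall_1[\exists]$ to $\forall_1\exists$: without the effective evaluation of the $b_i$ provided by decidability of ${\rm Th}(k)$, the argument would only give a Turing reduction, so the input from Tarski, Macintyre, and the finite-model case is essential.
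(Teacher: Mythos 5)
Your proposal is correct and follows essentially the same route as the paper, whose proof is a two-line citation of exactly the ingredients you use: Example \ref{ex:large} to get the existential $\emptyset$-definability of $k$ in $k(t)$ (hence the formula $\gamma$ needed for Proposition \ref{prop:rat_ff_Tredb}), decidability of ${\rm Th}(k)$ (hence of ${\rm Th}_{\forall_1\exists}(k)$) to evaluate the sentences $\psi_2$ produced by Proposition \ref{prop:rat_ff_Treda}, and the two propositions themselves. Your explicit handling of the passage from $\forall_1[\exists]$ to $\forall_1\exists$ via positive boolean combinations is a correct elaboration of a step the paper leaves implicit.
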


\begin{proof}
For these $k$, $k$ is existentially $\emptyset$-definable in $k(t)$
by Example \ref{ex:large},
and ${\rm Th}(k)$ is decidable (in particular so is ${\rm Th}_{\forall_1\exists}(k)$),
so the claim follows from
Proposition \ref{prop:rat_ff_Treda}
and
Proposition \ref{prop:rat_ff_Tredb}.
\end{proof}

\begin{remark}
For $k$ algebraically closed of characteristic zero, the undecidability of ${\rm Th}_\exists(k(t),t)$ is a big open problem -
in fact we do not even know whether 
${\rm Th}(k(t),t)$ is undecidable,
see e.g.~\cite{Koenigsmann_survey}.
For $k$ finite, real closed
or $p$-adically closed,
the undecidability of 
${\rm Th}_\exists(k(t),t)$
is known by \cite{Pheidas,Videla}, \cite{Denef},
respectively \cite{MB,DD,BDD},
but even then the many-one equivalence gives additional computability theoretic information.
\end{remark}

\begin{proposition}\label{prop:Tred_ff}
Let $\mathsf{F}\in\{\exists,\exists\forall,\exists\forall\exists,\dots\}$.
   Let
   $k_0$ be a field of characteristic $0$
   and let
   $f\in k_0[\mathbf{x},\mathbf{y}]$ be absolutely irreducible
   such that 
   the curve $C:f=0$
   has function field $k_0(C)={\rm Frac}(k_0[\mathbf{x},\mathbf{y}]/(f))$ of genus at least $2$.
   Let $\gamma(z)\in{\rm Form}_\mathsf{F}(\mathfrak{L}_{\rm ring}(k_0))$.
   There is a map
   $$
    \tau\colon{\rm Form}_\mathsf{F}(\mathfrak{L}_{\rm ring}(k_0[\mathbf{x},\mathbf{y}]))\rightarrow{\rm Form}_{\forall_2[\mathsf{F}]}(\mathfrak{L}_{\rm ring}(k_0))
   $$
   such that if $\varphi(\underline{u})\in{\rm Form}_\mathsf{F}(\mathfrak{L}_{\rm ring}(k_0[\mathbf{x},\mathbf{y}]))$,
   then for every extension $k/k_0$ with
   $k\subseteq\gamma(k(C))\subseteq k(C)\setminus\{x\}$, we have
   ${\rm Var}(\varphi)={\rm Var}(\tau\varphi)$ and
   $$
    \varphi(k(C)) \cap k^n = \tau\varphi(k(C))\cap k^n,
   $$
   where $x,y$ are the residues of $\mathbf{x},\mathbf{y}$ in ${\rm Frac}(k[\mathbf{x},\mathbf{y}]/(f))$,
   and we interpret constants from $k_0[\mathbf{x},\mathbf{y}]$ as their residues in $k_0[x,y]$.
   In particular, 
   for $\varphi\in{\rm Sent}_\mathsf{F}(\mathfrak{L}_{\rm ring}(k_0[\mathbf{x},\mathbf{y}]))$, we get
   $k(C)\models(\varphi \leftrightarrow\tau\varphi)$.
   When given an injection $\alpha\colon k_0\rightarrow\mathbb{N}$,
   $\tau$ is computable uniformly in $f$ and $\gamma$.
\end{proposition}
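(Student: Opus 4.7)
My plan is to mimic Proposition \ref{prop:rat_ff_Tredb}, replacing the role played there by the endomorphisms of $k(t)/k$ (Lemma \ref{lem:EndAut}) with a Riemann--Hurwitz argument that exploits the hypothesis $g(C) \geq 2$. Writing $\varphi(\underline{u})$ as $\psi(\underline{u}, \mathbf{x}, \mathbf{y})$, viewing $\psi$ as a formula in $\Form_\mathsf{F}(\mathfrak{L}_{\rm ring}(k_0))$ with $\mathbf{x}, \mathbf{y}$ promoted to free variables, I set
$$
 \tau\varphi(\underline{u}) \;:=\; \forall \mathbf{x}'\, \forall \mathbf{y}'\, \bigl(\gamma(\mathbf{x}') \,\vee\, f(\mathbf{x}',\mathbf{y}') \neq 0 \,\vee\, \psi(\underline{u}, \mathbf{x}', \mathbf{y}')\bigr).
$$
Since $\gamma, \psi \in \Form_\mathsf{F}$, the inequation is quantifier-free (hence in $\mathsf{F}$), and every fragment is closed under disjunction, the matrix lies in $\Form_\mathsf{F}(\mathfrak{L}_{\rm ring}(k_0))$, so $\tau\varphi \in \Form_{\forall_2[\mathsf{F}]}(\mathfrak{L}_{\rm ring}(k_0))$, and $\mathrm{Var}(\tau\varphi) = \mathrm{Var}(\varphi)$.

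For the equivalence of the two subsets of $k^n$, the inclusion ``$\supseteq$'' is immediate by specialising $\mathbf{x}' = x, \mathbf{y}' = y$: one has $f(x,y) = 0$ by construction, and $\neg\gamma(x)$ holds in $k(C)$ by the hypothesis $x \notin \gamma(k(C))$, so $\tau\varphi(\underline{a})$ forces $\psi(\underline{a}, x, y)$, i.e.~$\varphi(\underline{a})$. For ``$\subseteq$'', assume $\underline{a} \in k^n$ and $k(C) \models \psi(\underline{a}, x, y)$, and let $x', y' \in k(C)$; the only non-trivial case is $\neg\gamma(x')$ and $f(x', y') = 0$. The first condition combined with $k \subseteq \gamma(k(C))$ gives $x' \notin k$. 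By the geometric integrality of $C$ the extension $k(C)/k$ is regular, so $k$ is algebraically closed in $k(C)$; thus the kernel of the $k$-algebra map $\rho \colon k[\mathbf{x}, \mathbf{y}]/(f) \to k(C)$, $\mathbf{x} \mapsto x'$, $\mathbf{y} \mapsto y'$, cannot be a maximal ideal (otherwise its image, a finite extension of $k$ sitting inside $k(C)$, would reduce to $k$ itself, forcing $x' \in k$). The ring $k[\mathbf{x}, \mathbf{y}]/(f)$ is a one-dimensional domain, so $\ker\rho$ must be $(0)$, and $\rho$ extends to a $k$-embedding $\sigma \colon k(C) \hookrightarrow k(C)$ with $\sigma(x) = x'$ and $\sigma(y) = y'$.

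The key step is now to upgrade $\sigma$ to an automorphism. After base change to $\bar k$, $\sigma$ corresponds to a non-constant (since $\sigma(x) \notin k$) $\bar k$-morphism between two copies of the smooth projective model of $C_{\bar k}$, a curve of genus $g \geq 2$; because $\mathrm{char}(k_0) = 0$ the morphism is separable, so the Riemann--Hurwitz formula forces its degree to be $1$ (degree $d \geq 2$ would give $2g - 2 \geq d(2g - 2) > 2g - 2$). Hence $\sigma$ is a $k$-automorphism of $k(C)$; applying it to the true formula $\psi(\underline{a}, x, y)$ and using $\sigma|_k = \mathrm{id}_k$ yields $\psi(\underline{a}, x', y')$, completing the proof of the equivalence. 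Uniform computability of $\tau$ in $f$ and $\gamma$ (given a presentation of $k_0$, which provides codes for the coefficients of $f$ and for the constants of $\gamma$) is then immediate from the purely syntactic nature of the construction.

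The main obstacle is the Riemann--Hurwitz step, which is the geometric heart of the argument: it is precisely what enforces the hypotheses $g(C) \geq 2$ and $\mathrm{char}(k_0) = 0$, ruling out the proper self-embeddings of $k(C)$ that would otherwise break the ``$\subseteq$'' direction.
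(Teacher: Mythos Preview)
Your proposal is correct and follows essentially the same approach as the paper: the formula $\tau\varphi$ you write down is the paper's, and both proofs use that $f(x',y')=0$ with $x'\notin k$ forces a $k$-embedding $k(C)\hookrightarrow k(C)$ which Riemann--Hurwitz (via the genus $\geq 2$ and characteristic $0$ hypotheses) upgrades to an automorphism. The only cosmetic differences are that you justify the injectivity of $\rho$ via a Krull-dimension argument (the paper simply asserts $k(a,b)\cong_k k(C)$) and you pass to $\bar k$ for Riemann--Hurwitz whereas the paper applies it directly over $k$; both are fine.
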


\begin{proof}
Assume that $k/k_0$ is an extension with 
$k\subseteq\gamma(k(C))\subseteq k(C)\setminus\{x\}$.
Write $k(C)=k(x,y)$ and note that $f(x,y)=0$ and $x\notin k$. 
If $a,b\in k(C)\setminus k$ with $f(a,b)=0$, then $k(a,b)\cong_k k(C)$,
and since $k(x,y)/k(a,b)$ is separable and $k(x,y)$ and $k(a,b)$ are function fields of the same genus greater one, the Riemann--Hurwitz formula \cite[Theorem 3.4.13]{Stichtenoth} implies that $k(a,b)=k(x,y)$. In particular,
there is a unique $\sigma\in{\rm Aut}(k(C)/k)$ with $\sigma(x)=a$, $\sigma(y)=b$.
Conversely, if $\sigma\in{\rm Aut}(k(C)/k)$, then $f(\sigma(x),\sigma(y))=0$ and $\sigma(x)\notin k$.

Given $\varphi(\underline{u})\in\Form_\mathsf{F}(\mathfrak{L}_{\rm ring}(k_0[\mathbf{x},\mathbf{y}]))$, 
express each constant $c$ occurring in $\varphi$  
as $t(\mathbf{x},\mathbf{y})$ for an $\mathfrak{L}_{\rm ring}(k_0)$-term $t(v,w)$,
and this way obtain 
$\varphi_0(\underline{u},v,w)\in{\rm Form}_\mathsf{F}(\mathfrak{L}_{\rm ring}(k_0))$ such that
$k(C)\models(\varphi(\underline{u})\leftrightarrow\varphi_0(\underline{u},x,y))$,
independent of $k$.
Define
$$
 \tau\varphi := \forall a,b\big(f(a,b)\neq0\vee\gamma(a)\vee\varphi_0(\underline{u},a,b)\big).
$$
Then $\tau\varphi\in{\rm Form}_{\forall_2[\mathsf{F}]}(\mathfrak{L}_{\rm ring}(k_0))$ satisfies the claim:
Indeed, if $\underline{c}\in\varphi(k(C))\cap k^n$, then $k(C)\models\varphi_0(\underline{c},x,y)$,
and if $k(C)\models f(a,b)=0\wedge\neg\gamma(a)$, then, as explained above, there exists $\sigma\in{\rm Aut}(k(C)/k)$ with $\sigma(x)=a$, $\sigma(y)=b$, and therefore
$\varphi_0(\sigma(\underline{c}),\sigma(x),\sigma(y)) = \varphi_0(\underline{c},a,b)$ holds in $k(C)$,
so $\underline{c}\in\tau\varphi(k(C))$.
Conversely, if $\underline{c}\in\tau\varphi(k(C))\cap k^n$, then in particular $k(C)\models\varphi_0(\underline{c},x,y)$, and hence $\underline{c}\in\varphi(k(C))$.
\end{proof}

In positive characteristic we face additional difficulties, 
but we can use ideas from Section \ref{sec:coding} to reduce the two universal quantifiers to one, 
in a less straightforward way, 
\new{as the following result shows. Here, we use the terms
{\em presented} and {\em splitting algorithm} in the sense of \cite[Definitions 19.1.1, 19.1.2]{FJ}.}

\begin{proposition}\label{prop:Tred_ff_2}
   Let $k_0$ be a perfect field of characteristic $p>0$ and let 
   $f\in k_0[\mathbf{x},\mathbf{y}]$ be separable in $\mathbf{y}$ and absolutely irreducible
   such that
   the curve $C:f=0$
   has function field $k_0(C)={\rm Frac}(k_0[\mathbf{x},\mathbf{y}]/(f))$ of genus at least $2$.
   There is a map
   $$
    \tau\colon{\rm Form}_\exists(\mathfrak{L}_{\rm ring}(k_0[\mathbf{x},\mathbf{y}]))\rightarrow{\rm Form}_{\forall_1[\exists]}(\mathfrak{L}_{\rm ring}(k_0))
   $$
   such that if 
   $\varphi(\underline{u})\in{\rm Form}_\exists(\mathfrak{L}_{\rm ring}(k_0[\mathbf{x},\mathbf{y}]))$,
   then for every regular extension $k/k_0$ with $k$  perfect, 
   ${\rm Var}(\varphi)={\rm Var}(\tau\varphi)$ and
   $$
    \varphi(k(C)) \cap k^n = \tau\varphi(k(C))\cap k^n,
   $$
   where $x,y$ are the residues of $\mathbf{x},\mathbf{y}$ in ${\rm Frac}(k[\mathbf{x},\mathbf{y}]/(f))$
   and we interpret constants from $k_0[\mathbf{x},\mathbf{y}]$ by their residues in $k_0[x,y]$.
   In particular, for $\varphi\in{\rm Sent}_\exists(\mathfrak{L}_{\rm ring}(k_0[\mathbf{x},\mathbf{y}]))$, we get
   $k(C)\models(\varphi \leftrightarrow\tau\varphi)$.
   When given an injection $\alpha\colon k_0\rightarrow\mathbb{N}$,
   the map $\tau$ is computable,
   and if $k_0$ is moreover presented and has a splitting algorithm,
   then $\tau$ is computable uniformly in $f$.
\end{proposition}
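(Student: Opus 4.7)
The plan is to first construct, along the lines of Proposition \ref{prop:Tred_ff}, a $\forall_{2}[\exists]$-formula equivalent to $\varphi$ on $k^{n}$-tuples, and then use the $p$-th power coding from Section \ref{sec:coding} to compress its two universal quantifiers into one. Since Proposition \ref{prop:Tred_ff} is stated only for characteristic $0$, both steps have to be carried out directly in this positive-characteristic setting.

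First, I take $\gamma(z):=\exists z'(z=(z')^{p})$, which defines $k(C)^{p}$ in $k(C)$. The hypotheses mimic those of Proposition \ref{prop:Tred_ff} with this $\gamma$: perfectness of $k$ gives $k\subseteq k(C)^{p}$, and $f$ separable in $\mathbf{y}$ makes $k(C)/k(x)$ separable, so $x$ is a separating transcendence base and $x\notin k(C)^{p}$. Extracting constants from $k_{0}[\mathbf{x},\mathbf{y}]$ as in the proof of Proposition \ref{prop:Tred_ff}, I obtain $\varphi_{0}(\underline{u},v,w)\in\Form_{\exists}(\mathfrak{L}_{\mathrm{ring}}(k_{0}))$ with $k(C)\models\varphi(\underline{u})\leftrightarrow\varphi_{0}(\underline{u},x,y)$, and then the formula $\tau_{1}\varphi(\underline{u}):=\forall a,b\,(f(a,b)\neq 0\vee\gamma(a)\vee\varphi_{0}(\underline{u},a,b))$ lies in $\Form_{\forall_{2}[\exists]}(\mathfrak{L}_{\mathrm{ring}}(k_{0}))$ and satisfies $k(C)\models(\varphi\leftrightarrow\tau_{1}\varphi)$ on $k^{n}$-tuples: for $(a,b)$ with $f(a,b)=0$ and $a\notin k(C)^{p}$, separability of $k(C)/k(a,b)$ follows from $a$ being a separating transcendence basis, and with equal genera, Riemann--Hurwitz (using $g(C)\geq 2$) forces $k(a,b)=k(C)$, giving $\sigma\in\mathrm{Aut}(k(C)/k)$ with $\sigma(x,y)=(a,b)$ and hence $\varphi_{0}(\underline{u},a,b)\leftrightarrow\varphi_{0}(\underline{u},x,y)$ for $\underline{u}\in k^{n}$.

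Second, I would reduce $\forall_{2}$ to $\forall_{1}$ using that $[k(C):k(C)^{p}]=p$ (since $k$ is perfect and $k(C)/k(x)$ is separable of transcendence degree $1$): any $a\in k(C)\setminus k(C)^{p}$ is a $p$-basis of $k(C)$ over $k(C)^{p}$, and by Lemma \ref{lem:chi} the existential formula $\chi_{p,1,2}(w,\alpha,\beta,\xi)$ defines a surjection $k(C)\twoheadrightarrow k(C)^{2}$ whenever $\xi$ is a $p$-basis. Following Proposition \ref{prop:coding_param}, the pair $(a,b)$ gets packed into a single $w$; since $\mathfrak{L}_{\mathrm{ring}}(k_{0})$ has no $p$-basis constant at hand, the $p$-basis $\xi$ itself is witnessed existentially on the curve (together with its second coordinate $\eta$), producing a formula of shape
\[
 \tau\varphi(\underline{u}) \;:=\; \forall w\,\exists\,\xi,\eta,a,b,\underline{\lambda}\,\bigl(\gamma(w)\vee(\chi_{p,1,2}(w,a,b,\xi)\wedge f(\xi,\eta)=0\wedge\psi_{0}(\underline{u},a,b))\bigr)
\]
in $\Form_{\forall_{1}[\exists]}(\mathfrak{L}_{\mathrm{ring}}(k_{0}))$, where $\psi_{0}$ is the existential matrix of $f(a,b)\neq 0\vee\gamma(a)\vee\varphi_{0}(\underline{u},a,b)$.

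The main obstacle will be showing that the existentially witnessed $\xi$ does not destroy the equivalence $\varphi\leftrightarrow\tau\varphi$ on $k^{n}$-tuples. The $(\Leftarrow)$ direction follows from taking $\xi=x$, which makes the $\chi_{p,1,2}$-decomposition of $w$ unique and lets the automorphism-invariance of $\varphi_{0}$ established in the first step deliver the existential witness for $\psi_{0}$; for the $(\Rightarrow)$ direction, one tests the formula at a specifically chosen $w^{\ast}$ (such as $x^{p}+y^{p}x^{p-1}$, generically not in $k(C)^{p}$) so that any valid existential witness either has $f(a,b)\neq 0$, has $a\in k(C)^{p}$, or produces a pair in the $\mathrm{Aut}(k(C)/k)$-orbit of $(x,y)$, and in the last case $\varphi_{0}(\underline{u},a,b)\leftrightarrow\varphi_{0}(\underline{u},x,y)$ yields the conclusion. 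Computability of $\tau$ uniformly in $f$ follows from the explicit uniformly computable formulas $\chi_{p,1,2}$ (Lemma \ref{lem:chi}) and $\gamma$, combined with the splitting algorithm over $k_{0}$ for the polynomial manipulations.
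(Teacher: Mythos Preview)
Your two-step plan is natural, and the first step (the $\forall_2[\exists]$ formula $\tau_1\varphi$) is essentially correct.  The gap is in the second step, precisely in the direction you flag as delicate: showing that $\tau\varphi(\underline d)$ implies $\varphi_0(\underline d,x,y)$.  The point is that $\xi$ is existentially quantified, so when you test at your chosen $w^*$ you do not get to pick $\xi=x$; the witness may be any $(\xi,\eta)$ on the curve with $\xi\notin k(C)^p$, i.e.\ any $(\xi,\eta)=\sigma(x,y)$ for $\sigma\in\mathrm{Aut}(k(C)/k)$.  For $\sigma\neq\mathrm{id}$ the $\xi$-expansion of $w^*$ produces some $(a,b)=(\lambda_0,\lambda_{p-1})$ that have no reason to lie on $C$, so $\psi_0(\underline d,a,b)$ is satisfied trivially via the disjunct $f(a,b)\neq 0$, and nothing about $\varphi_0(\underline d,x,y)$ is learned.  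In other words, your three-way case split is correct, but two of the three cases are vacuous for the conclusion you want, and you have not shown that the adversary is forced into the third.

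The paper closes exactly this gap by exploiting that $m:=|\mathrm{Aut}(k(C)/k)|=|\mathrm{Aut}(k_0(C)/k_0)|$ is finite and independent of $k$.  Rather than a single $(\xi,\eta)$, the existential part of $\tau\varphi$ demands $m$ pairwise distinct points $(x_1,y_1),\dots,(x_m,y_m)$ on $C$, each forced (via the condition $\sum_j\lambda_j^p x_i^j=c_0\notin k(C)^p$) to have $x_i\notin k(C)^p$ and hence to be of the form $\sigma_i(x,y)$.  Since there are exactly $m$ automorphisms, one of the $(x_i,y_i)$ must equal $(x,y)$ itself, and for that index the $x$-expansion of the carefully chosen $c_0=x^p+xy^p$ pins down $(\lambda_0,\lambda_1)=(x,y)$, forcing the disjunct $\varphi_0(\underline d,x,y)$.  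This is also why the splitting algorithm is needed for computability uniformly in $f$: it is what lets one compute $m$ from $f$ (via \cite{Hess}); your construction never uses it, which is another symptom that the key finiteness ingredient is missing.
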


\begin{proof}
    As $k_0$ is perfect
    and $k_0(C)$ is of genus at least two,
    $m:=|{\rm Aut}(k_0(C)/k_0)|<\infty$,
    see \cite{Rosenlicht}.
    Since $k_0$ is perfect, also the genus of $\overline{k_0}(C)$ is at least two, and so also ${\rm Aut}(\overline{k_0}(C)/\overline{k_0})$ is finite.
    It follows that
    ${\rm Aut}(\overline{k}(C)/\overline{k})={\rm Aut}(\overline{k_0}(C)/\overline{k_0})$,
    and therefore also
    ${\rm Aut}(k(C)/k)={\rm Aut}(k_0(C)/k_0)$ for every regular $k/k_0$.
    
    Let $(a,b)\in k(C)^2\setminus k^2$ with $f(a,b)=0$.
    Then $k(C)/k(a,b)$ is finite, 
    and $k(a,b)\cong_k k(C)$,
    so there exists $\sigma\in{\rm End}(k(C)/k)$ with $\sigma(x,y)=(a,b)$.
    If $k(C)/k(a,b)$ is separable, then in fact $k(C)=k(a,b)$ by the Riemann--Hurwitz formula,
    so $\sigma\in{\rm Aut}(k(C)/k)$.
    If on the other hand $k(C)/k(a,b)$ is inseparable, 
    then, as $[k(C):k(C)^p]=p$ due to the assumption that $k$ is perfect,
    $k(a,b)\subseteq k(C)^p$, and so in particular $a\in k(C)^p$.
    
   Now let $\varphi(\underline{u})\in {\rm Form}_\exists(\mathfrak{L}_{\rm ring}(k_0[\mathbf{x},\mathbf{y}]))$, and again rewrite constants by $\mathfrak{L}_{\rm ring}(k_0)$-terms 
   to obtain
   $\varphi_0(\underline{u},v,w)\in{\rm Form}_\exists(\mathfrak{L}_{\rm ring}(k_0))$ with
   $k(C)\models(\varphi(\underline{u})\leftrightarrow\varphi_0(\underline{u},x,y))$.
   As in the previous proof $\varphi_{0}(\underline{u},v,w)$ is independent of $k$.
   Define
\begin{eqnarray*}
\tau\varphi&:=&\forall z \left( \exists w(w^p=z) \vee\eta(\underline{u},z) \right)
\end{eqnarray*}
   with 
\begin{eqnarray*}
\eta(\underline{u},z) &:=&
\exists x_1,\dots,x_m,y_1,\dots,y_m \bigg(\bigwedge_{i\neq j}(x_i\neq x_j\vee y_i\neq y_j)\wedge \bigwedge_{i=1}^mf(x_i,y_i)=0 \wedge \bigwedge_{i=1}^m\psi(\underline{u},z,x_i) \bigg)
\end{eqnarray*}
and 
    \begin{eqnarray*}
     \psi(\underline{u},z,\xi) &:=& \exists \lambda_0,\dots,\lambda_{p-1} \bigg(z=\sum_{j=0}^{p-1}\lambda_j^p {\xi^j} \wedge \big(f(\lambda_0,\lambda_1)\neq 0  
    \vee \exists w(w^p=\lambda_0)\vee\varphi_0(\underline{u},\lambda_0,\lambda_1)\big) \bigg) . 
   \end{eqnarray*}
Then $\tau\varphi\in{\rm Form}_{\forall_1[\exists]}(\mathfrak{L}_{\rm ring}(k_0))$ satisfies the claim:

   Indeed, if 
   $\underline{d}\in\tau\varphi(k(C))\cap k^n$
  then in particular 
  $k(C)\models\eta(\underline{d},c_0)$
  for $c_0:=x^p+xy^p$,
  as the separability of $f$ in $\mathbf{y}$ implies that
   $x\notin k(C)^p$ and therefore $c_0\notin k(C)^p$.
   Thus there exist pairwise distinct $(x_1,y_1),\dots,(x_m,y_m)$ with  $f(x_i,y_i)=0$ 
   such that $\psi(\underline{d},c_0,x_i)$ holds for every $i$.
   In particular, $\sum_{j=0}^{p-1} \lambda_j^px_i^j=c_0\notin k(C)^p$,
   so in particular $x_i\notin k(C)^p$, especially $x_i\notin k$.
   Therefore $(x_i,y_i)=\sigma(x,y)$ for some $\sigma\in{\rm Aut}(k(C)/k)$, so since $m=|{\rm Aut}(k(C)/k)|$ there exists $i_0$ with $(x_{i_0},y_{i_0})=(x,y)$.
   From $\psi(\underline{d},c_0,x_{i_0})$ we get 
   $\lambda_0,\dots,\lambda_{p-1}$ with 
   $$
    \sum_{j=0}^{p-1} \lambda_j^px^j=c_0=x^px^0+y^px^1,
   $$ 
   which since $x$ is a $p$-basis for $k(C)$ implies that $\lambda_0=x$ and $\lambda_1=y$,
   so since $f(\lambda_0,\lambda_1)=f(x,y)=0$ and $\lambda_0=x\notin k(C)^p$ we conclude that $\varphi_0(\underline{d},x,y)$ holds,
   i.e.~$\underline{d}\in\varphi(k(C))$.

   Conversely, suppose that 
   $\underline{d}\in\varphi(k(C))\cap k^n$,
   in particular
   $k(C)\models\varphi_0(\underline{d},x,y)$.
   Then also $\varphi_0(\underline{d},a,b)$ holds in $k(C)$ for all $(a,b)\in k(C)^2\setminus k^2$ with $f(a,b)=0$,
   as it holds in $k(a,b)$ by the isomorphism above,
   and therefore it holds in $k(C)\supseteq k(a,b)$ due to the fact that $\varphi_0$ is existential.
   Now take any $c\in k(C)\setminus k(C)^p$,
   let ${\rm Aut}(k(C)/k)=\{\sigma_1,\dots,\sigma_m\}$
   and define $(x_i,y_i)=\sigma_i(x,y)$.
   Then $(x_1,y_1),\dots,(x_m,y_m)$ are pairwise distinct
   as $x,y$ generate $k(C)$ over $k$, and
   $f(x_i,y_i)=0$ for each $i$.
   We claim that $k(C)\models\psi(\underline{d},c,x_i)$ for every $i$.
   Indeed, write $c=\sum_{j=0}^{p-1} \lambda_j^p x_i^j$, which is possible since
   $x_i\notin k(C)^p$, so $x_i$ is a $p$-basis of $k(C)$.
   Whenever $f(\lambda_0,\lambda_1)= 0$  and $\lambda_0\notin k(C)^p$, in particular $\lambda_0\notin k$,
   so $\varphi_0(\underline{d},\lambda_0,\lambda_1)$ holds.

If $k_0$ is presented and has a splitting algorithm,
then $\tau$ is computable uniformly in $f$,
   since
   ${\rm Aut}(k_0(C)/k_0)$ and in particular $m$ is computable,
   see \cite{Hess}.
\end{proof}

{\color{blue}
\begin{proof}[Simplified proof of Proposition \ref{prop:Tred_ff_2}]
Let $\varphi(\underline{u})\in {\rm Form}_\exists(\mathfrak{L}_{\rm ring}(k_0[\mathbf{x},\mathbf{y}]))$, and again rewrite constants by $\mathfrak{L}_{\rm ring}(k_0)$-terms to obtain
$\varphi_0(\underline{u},v,w)\in{\rm Form}_\exists(\mathfrak{L}_{\rm ring}(k_0))$ with
$k(C)\models(\varphi(\underline{u})\leftrightarrow\varphi_0(\underline{u},x,y))$.
As before, $\varphi_{0}(\underline{u},v,w)$ is independent of $k$.
Define
\begin{eqnarray*}
\tau\varphi &:=& \forall \zeta\exists \xi,\eta,\lambda_0,\dots,\lambda_{p-1}\bigg(f(\xi,\eta)=0\wedge \zeta=\sum_{j=0}^{p-1}\lambda_j^p\xi^j\wedge\varphi_0(\underline{u},\xi,\eta)\bigg)
\end{eqnarray*} 
Then $\tau\varphi\in{\rm Form}_{\forall_1[\exists]}(\mathfrak{L}_{\rm ring}(k_0))$ satisfies the claim:

Indeed, if $\underline{d}\in\tau\varphi(k(C))\cap k^n$, then in particular
there exist $x_0,y_0,c_0,\dots,c_{p-1}\in k(C)$ with $f(x_0,y_0)=0$, $x=\sum_{j=0}^{p-1}c_j^px_0^j$ and $k(C)\models\varphi_0(\underline{d},x_0,y_0)$.
Since $x\notin k(C)^p$ due to the assumption that $f$ is separable in $\mathbf{y}$, this implies $x_0\notin k(C)^p$.
In particular, $x_0\notin k^p=k$, hence $k(x_0,y_0)\cong k(C)$; more precisely, there is a $k$-isomorphism $\sigma\colon k(x_0,y_0)\rightarrow k(C)$ with $\sigma(x_0)=x$, $\sigma(y_0)=y$.
As $[k(C):k(C)^p]=p$ due to the assumption that $k$ is perfect,
$x_0\notin k(C)^p$ also implies that $k(C)/k(x_0,y_0)$ is separable, 
so since $C$ is of genus at least $2$, 
the Riemann--Hurwitz formula shows that $k(C)=k(x_0,y_0)$,
hence $\sigma\in{\rm Aut}(k(C)/k)$.
Therefore, $k(C)\models\varphi_0(\underline{d},x_0,y_0)$ implies 
$k(C)\models\varphi_0(\underline{d},x,y)$, hence
$\underline{d}\in\varphi(k(C))$.

Conversely, suppose that $\underline{d}\in\varphi(k(C))\cap k^n$.
We have that $f(x,y)=0$ and $k(C)\models\varphi_0(\underline{d},x,y)$.
Moreover, since $x\notin k(C)^p$ and $[k(C):k(C)^p]=p$,
for every $z\in k(C)$ there exist $c_0,\dots,c_{p-1}\in k(C)$ with $z=\sum_{j=0}^{p-1}c_j^px^j$.
Thus $\underline{d}\in \tau\varphi(k(C))$.

Note that $\tau$ is computable uniformly in $f$,
without a need for $k_0$ to be presented with a splitting algorithm.
\end{proof}
}

\begin{corollary}\label{cor:ff_A1E_gg1}
Let $k$ be a presented perfect field of positive characteristic not containing the algebraic closure of a finite field, and let $C$ be a smooth projective curve over $k$
of genus greater than $1$.
Then ${\rm Th}_{\forall_1\exists}(k(C),k)$ 
is undecidable.
\end{corollary}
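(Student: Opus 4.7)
The plan is to combine Proposition~\ref{prop:Tred_ff_2} with a known existential undecidability result for function fields in positive characteristic.

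First I would fix a plane affine model for $C$: since $C$ is smooth projective of genus at least $2$ over the perfect field $k$, I can find an absolutely irreducible $f\in k[\mathbf{x},\mathbf{y}]$, separable in $\mathbf{y}$, with $k(C)\cong{\rm Frac}(k[\mathbf{x},\mathbf{y}]/(f))$. Writing $x,y$ for the residues of $\mathbf{x},\mathbf{y}$, we then have $k(C)=k(x,y)$ and $f(x,y)=0$.

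Next, since $k/k$ is trivially regular and $k$ is perfect, I would apply Proposition~\ref{prop:Tred_ff_2} with $k_0=k$ to obtain a map
\[
 \tau\colon{\rm Sent}_\exists(\mathfrak{L}_{\rm ring}(k[\mathbf{x},\mathbf{y}]))\longrightarrow{\rm Sent}_{\forall_1[\exists]}(\mathfrak{L}_{\rm ring}(k))
\]
with $k(C)\models(\varphi\leftrightarrow\tau\varphi)$ for every $\varphi$ in the domain. Since $k$ is presented, $\tau$ is computable. Interpreting constants from $k[\mathbf{x},\mathbf{y}]$ via their residues in $k[x,y]\subseteq k(C)$, this provides a many-one reduction of the existential theory of $k(C)$ in $\mathfrak{L}_{\rm ring}(k[x,y])$ to ${\rm Th}_{\forall_1\exists}(k(C),k)$.

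Finally, I would invoke a theorem of Eisenträger--Shlapentokh type, as in \cite{ES17}, to conclude that the existential theory of $k(C)$ in the language $\mathfrak{L}_{\rm ring}(k[x,y])$ is undecidable: the hypothesis that $k$ has positive characteristic and does not contain the algebraic closure of its prime field is precisely what drives such undecidability results for one-variable function fields over $k$. Pulling back along $\tau$, the theory ${\rm Th}_{\forall_1\exists}(k(C),k)$ is then also undecidable. The main obstacle I anticipate lies in this last step: one has to ensure that the existential undecidability is available for a \emph{general} smooth projective curve of genus $>1$ with the two specific parameters $x,y$, rather than only on $\mathbb{P}^1$ or on some restricted class of curves; in the Eisenträger--Shlapentokh framework this reduction is routine, but it is the essential non-trivial input on which the whole argument rests.
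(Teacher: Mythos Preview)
Your proposal is correct and follows essentially the same route as the paper: choose a plane model $f(\mathbf{x},\mathbf{y})=0$, apply Proposition~\ref{prop:Tred_ff_2} with $k_0=k$ to get a computable reduction, and feed in the undecidability of the existential theory from \cite{ES17}. The obstacle you anticipate is not one: \cite[Theorem~1.1]{ES17} gives undecidability of ${\rm Th}_\exists(k(C),k(C))$ for \emph{any} function field of a curve over such $k$, not just for $\mathbb{P}^1$, and since $k(C)=k(x,y)$ this is many-one equivalent to the existential theory with parameters from $k[x,y]$.
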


\begin{proof}
With $k$ also $k(C)$ is presented, and 
\cite[Theorem 1.1]{ES17} gives that
${\rm Th}_{\exists}(k(C),k(C))$ is undecidable.
By choosing a plane affine curve $C':f(X,Y)=0$ birationally equivalent to $C$,
Proposition \ref{prop:Tred_ff_2} then immediately implies that
${\rm Th}_{\forall_1\exists}(k(C),k)$ is undecidable.
\end{proof}

\begin{remark}
The recent thesis of Tyrrell
contains a similar statement (\cite[Corollary 2.5.5]{Tyrrell_thesis}) that also includes the case of genus $1$,
but only for $k$ algebraic over the prime field.
\end{remark}

\begin{proof}[Proof of Theorem \ref{thm:intro_functionfield}]
Part $(a)$ is the ``in particular'' part of Proposition \ref{prop:kt_Et}.    
Part $(b)$ is Proposition \ref{prop:kt_A1E}.
Part $(c)$ follows from Proposition \ref{prop:rat_ff_Treda}.
Part $(d)$ follows from Proposition
\ref{prop:rat_ff_Tredb}.
Part $(e)$ follows from the case $n=1$
of 
Proposition \ref{prop:coding_param} and
Proposition \ref{prop:coding_noparam}.
\end{proof}

\section{Laurent series fields}
\label{sec:Laurent}

\noindent
In this final section
we first develop general results for various classes of henselian valued fields,
and then we discuss in more detail the special case of Laurent series fields $k(\!(t)\!)$.

We follow notation and conventions of \cite{EP}.
As usual we denote by $v_t$ the $t$-adic valuation on the rational function field $k(t)$,
by $k(t)^h$ the henselization of $k(t)$ with respect to $v_t$, and then $k(\!(t)\!)$ is the completion of $k(t)$ with respect to $v_t$.

We work in the usual three-sorted language of valued fields
$\mathfrak{L}_{\rm val}$, see for example \cite[Definition 3.4]{AF23}.
In such a many-sorted setting, we will understand the quantifiers in the fragments defined in Section \ref{sec:fragments} to run over any of the sorts,
but we introduce the new fragments $\forall^{\mathbf{k}}\exists$ and
$\forall^{\mathbf{k}}_n\exists$
which are defined (on languages containing $\mathfrak{L}_{\rm val}$) precisely like the fragments
$\forall\exists$ 
respectively $\forall_n\exists$ 
except that all universal quantifiers run over the residue field sort $\mathbf{k}$ only, cf.~\cite[Section 3]{AF16}.
For a valued field $(K,v)$ we denote by
$K$ the $\mathfrak{L}_{\rm ring}$-structure $K$, by $(K,v)$ the corresponding $\mathfrak{L}_{\rm val}$-structure, and 
by $(K,v,\pi)$, for $\pi\in K$,
the $\mathfrak{L}_{\rm val}(\varpi)$-structure $(K,v)$ with $\varpi$ interpreted as $\pi$.

Let $\bHen$ be the $\mathfrak{L}_{\mathrm{val}}$-theory of equicharacteristic henselian nontrivially valued fields,
$\bHepi$ the $\mathfrak{L}_{\mathrm{val}}(\varpi)$-theory 
of models of $\bHen$ in which the interpretation of $\varpi$ is a uniformizer
(i.e.~an element of smallest positive value),
$\bHeZ$ the $\mathfrak{L}_{\mathrm{val}}$-theory
of models of $\bHen$ 
with value group a $\mathbb{Z}$-group,
$\bHepiZ$ the $\mathfrak{L}_{\mathrm{val}}(\varpi)$-theory
of models of $\bHepi$ 
with value group a $\mathbb{Z}$-group,
and let $\bHed=\bHepi\cap{\rm Sent}(\mathfrak{L}_{\rm val})$.
Denote by $\bHen_0$ the theory of the models of $\bHen$ of characteristic zero,
similar for the other theories defined.
Note that $(k(\!(t)\!),v_t)\models\bHen\cup\bHeZ\cup\bHed$ and
$(k(\!(t)\!),v_t,t)\models\bHepi\cup\bHepiZ$.

\begin{remark}\label{R4}
\new{Several of the following results will be conditional on a 
hypothesis called \Rfour\ introduced in
\cite[\S2]{ADF23}, see also
\cite[Remark 3.18]{AF23}. 
We would like to point out that 
in the present paper,
\Rfour\ is never applied directly, but only through results in other works.}
\new{This hypothesis has several equivalent formulations, one of which is the following:}
\begin{itemize}
\item[\Rfour]
Every large field $k$ is existentially closed in every extension $K/k$ for which there exists a valuation $v$ on $K/k$ with residue field $Kv=k$.
\end{itemize}
It is a consequence of local uniformization, and in particular a consequence of resolution of singularities in positive characteristic,
see \cite[Proposition 2.3]{ADF23}.
\end{remark}

We start by quoting known reduction results for complete theories and for existential theories and then prove various similar results for universal-existential theories.

\begin{proposition}\label{prop:Ftt}
\begin{enumerate}[$(a)$]
    \item  There exists a 
    computable map $\tau\colon\mathrm{Sent}_{\exists}(\mathfrak{L}_{\rm val})\rightarrow\mathrm{Sent}_{\exists}(\mathfrak{L}_{\rm ring})$
    with   $\tau(\mathrm{Sent}_{\exists_n}(\mathfrak{L}_{\rm val}))\subseteq \mathrm{Sent}_{\exists_n}(\mathfrak{L}_{\rm ring})$
    for every $n$
    such that
    ${\rm Th}_\exists(K,v)=\tau^{-1}({\rm Th}_\exists(Kv))$
    for every $(K,v)\models\bHen$. 
    \item If \Rfour\ holds,
    there exists a 
    computable map $\tau\colon\mathrm{Sent}_{\exists}(\mathfrak{L}_{\rm val}(\varpi))\rightarrow\mathrm{Sent}_{\exists}(\mathfrak{L}_{\rm ring})$
    such that
    ${\rm Th}_\exists(K,v,\pi)=\tau^{-1}({\rm Th}_\exists(Kv))$
    for every $(K,v,\pi)\models\bHepi$. 
    \item 
     There exists a 
    computable map $\tau\colon\mathrm{Sent}(\mathfrak{L}_{\rm val})\rightarrow\mathrm{Sent}(\mathfrak{L}_{\rm ring})$
    such that
    ${\rm Th}(K,v)=\tau^{-1}({\rm Th}(Kv))$
    for every $(K,v)\models\bHeZ_0$. 
\end{enumerate}

\end{proposition}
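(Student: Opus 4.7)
The plan is to obtain each part by extracting a uniform computable reduction from a known Ax--Kochen--Ershov-style relative completeness theorem from the literature. Since the paper explicitly announces that this proposition quotes known reduction results, the work is in unpacking the cited results so as to exhibit the required computable map $\tau$ and, in part (a), to track the number of quantifiers.

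Parts (a) and (b) are the existential cases. For (a), the main theorem of \cite{AF16} gives, for $(K_1,v_1),(K_2,v_2)\models\bHen$, the equivalence of $\mathrm{Th}_\exists(K_1,v_1)\subseteq\mathrm{Th}_\exists(K_2,v_2)$ with $\mathrm{Th}_\exists(K_1v_1)\subseteq\mathrm{Th}_\exists(K_2v_2)$. The proof there constructs, computably in the input, an explicit syntactic translation of an existential $\mathfrak{L}_{\rm val}$-sentence into an existential $\mathfrak{L}_{\rm ring}$-sentence about the residue field, and I would take this translation as $\tau$. The preservation of quantifier count, $\tau(\mathrm{Sent}_{\exists_n}(\mathfrak{L}_{\rm val}))\subseteq\mathrm{Sent}_{\exists_n}(\mathfrak{L}_{\rm ring})$, is verified by inspection of the construction, which replaces each $\mathfrak{L}_{\rm val}$-existential quantifier (over any sort) by at most one residue-field quantifier. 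For (b), the same strategy applies to the main result of \cite{ADF23}, which under \Rfour\ yields the analogous translation, with the constant $\varpi$ handled by parameterizing the translation by a fixed uniformizer.

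For (c), I would invoke the classical Ax--Kochen--Ershov theorem for $\bHeZ_0$ (see e.g.\ \cite[Theorem 4.6.4]{EP}): since the value group is a $\mathbb{Z}$-group and Presburger arithmetic is complete, $\bHeZ_0$ is relatively complete over the $\mathfrak{L}_{\rm ring}$-theory of the residue field. To construct $\tau$, given $\varphi\in\mathrm{Sent}(\mathfrak{L}_{\rm val})$ I would simultaneously enumerate $\mathfrak{L}_{\rm ring}$-sentences $\psi$ together with their $\mathfrak{L}_{\rm val}$-interpretations $\tilde\psi$ asserting that the residue field satisfies $\psi$, and halt when some $\psi$ is found for which $\bHeZ_0\vdash\varphi\leftrightarrow\tilde\psi$. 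Relative completeness ensures termination, and the effective axiomatizability of $\bHeZ_0$ makes the provability check recursively enumerable.

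The main obstacle is the preservation of quantifier count in (a): it is not a consequence of abstract relative completeness but requires inspecting the concrete syntactic translation of \cite{AF16} and verifying block-by-block that no existential quantifiers are introduced beyond those present in $\varphi$. A secondary subtlety is the uniformity of the translation in (b) in the new constant symbol $\varpi$, but this follows directly from the construction in \cite{ADF23}.
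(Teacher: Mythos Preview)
Your overall strategy matches the paper's: this proposition is a quotation of results from the literature, and the paper's proof is literally a list of citations. However, you cite different sources than the paper does, and this matters for part~(a).

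The paper cites \cite[Proposition 3.26]{AF23} for (a) and \cite[Corollary 3.20(c)]{AF23} for (b), where \cite{AF23} is the companion paper on \emph{syntactic} fragments; for (c) it uses \cite[Corollary 2.23]{AF23} to package exactly the search procedure you describe. By contrast, you cite \cite{AF16} for (a) and \cite{ADF23} for (b). These earlier papers establish the \emph{semantic} transfer $\mathrm{Th}_\exists(K,v)=\mathrm{Th}_\exists(L,w)\Leftrightarrow\mathrm{Th}_\exists(Kv)=\mathrm{Th}_\exists(Lw)$ by model-theoretic embedding arguments, not by an explicit syntactic translation. One can of course extract a computable $\tau$ from such a completeness statement via the enumeration argument you give for (c), but your claim for (a) is stronger: you assert that \cite{AF16} already \emph{constructs} a translation that ``replaces each $\mathfrak{L}_{\rm val}$-existential quantifier by at most one residue-field quantifier,'' hence preserves $\exists_n$. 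That is not what \cite{AF16} does; its proof goes through saturated embeddings, not a formula-by-formula rewriting, and the search procedure gives no control on the quantifier count of the output $\psi$. The $\exists_n$-preservation is genuinely additional content, and this is precisely why the paper cites the later \cite{AF23} rather than \cite{AF16}. Your identification of this as the ``main obstacle'' is correct, but your proposed resolution (inspection of \cite{AF16}) does not work as stated.

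Parts (b) and (c) of your proposal are fine in substance: for (b) the underlying mathematics is in \cite{ADF23} even if the computable-map formulation is packaged in \cite{AF23}, and for (c) your search procedure is exactly the content of \cite[Corollary 2.23]{AF23}.
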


\begin{proof}
Part $(a)$ is \cite[Proposition 3.26]{AF23}.
Part $(b)$ is \cite[Corollary 3.20(c)]{AF23}.
Part $(c)$ is an immediate consequence of the classical
Ax--Kochen/Ershov principle for equicharacteristic zero henselian valued fields;
the map $\tau$ can be obtained for example from
\cite[Corollary 2.23]{AF23}.
\end{proof}

\begin{lemma}\label{lem:AE_1}
If an embedding $\iota\colon(K,v)\rightarrow(L,w)$ 
of valued fields 
induces an isomorphism of residue fields $Kv\rightarrow Lw$,
then
$\mathrm{Th}_{\forall^{\mathbf{k}}\exists}(K,v,C)\subseteq\mathrm{Th}_{\forall^{\mathbf{k}}\exists}(L,w,\iota C)$
with constant symbols in the field sort
from $C\subseteq K$.
\end{lemma}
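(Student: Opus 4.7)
The plan is to unwind the definition of the fragment $\forall^{\mathbf{k}}\exists$ and reduce the statement to the preservation of existential formulas under embeddings, exploiting the fact that $\iota$ being surjective on the residue field sort lets any residue-field tuple in $Lw$ be lifted to one in $Kv$.

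More precisely: given $\varphi \in \mathrm{Th}_{\forall^{\mathbf{k}}\exists}(K,v,C)$, I would first apply the prenex normal form procedure from Remark~\ref{rem:prnx} (with $\mathsf{F}=\exists$) to write $\varphi$ as $\forall u_1\dots\forall u_n\,\psi(\bar{u})$, where by the definition of $\forall^{\mathbf{k}}\exists$ the variables $u_i$ all range over the residue field sort $\mathbf{k}$ and $\psi\in\Form_\exists(\mathfrak{L}_{\rm val}(C))$. To verify $(L,w,\iota C)\models\varphi$, I would then fix an arbitrary tuple $\bar{b}\in (Lw)^n$ and use the hypothesis that the residue field component $\bar{\iota}\colon Kv\to Lw$ of $\iota$ is an isomorphism to pick $\bar{a}\in (Kv)^n$ with $\bar{\iota}(\bar{a})=\bar{b}$. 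Since $(K,v,C)\models\varphi$, we have $(K,v,C)\models\psi(\bar{a})$, and because $\iota\colon(K,v,C)\to(L,w,\iota C)$ is an $\mathfrak{L}_{\rm val}(C)$-embedding of many-sorted structures and existential formulas are preserved under embeddings, this yields $(L,w,\iota C)\models\psi(\bar{b})$. As $\bar{b}\in(Lw)^n$ was arbitrary, $\varphi$ holds in $(L,w,\iota C)$, as required.

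There is no serious obstacle here; the argument is essentially a direct application of the definitions. The only small points worth checking are that in the three-sorted setting an $\mathfrak{L}_{\rm val}(C)$-embedding still preserves existential formulas (it does, since this is a purely syntactic fact about embeddings) and that the prenex form produced by Remark~\ref{rem:prnx} keeps the universal quantifiers within the residue field sort, which is immediate since the procedure only reshuffles quantifiers without changing the sort of their variables.
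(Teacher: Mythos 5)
Your proof is correct and is exactly the paper's argument: the paper's own proof is a one-line appeal to the fact that existential sentences with parameters from the residue field sort and from $C$ are preserved under embeddings, which is precisely the lifting-then-preservation argument you spell out. No issues.
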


\begin{proof}
This follows from the fact that the satisfaction of
existential sentences (in this case with parameters from the residue field sort and from $C$) is preserved under embeddings.
\end{proof}

\begin{proposition}\label{lem:AE_3}\label{prof:AE_char0}
Let $(K,v,\pi_{K})\models\bHepi$
and let $(L,w,\pi_L)$ be an $\mathfrak{L}_{\mathrm{val}}(\varpi)$-structure with
$(L,w)\models\bHen$ and $0<w(\pi_L)<\infty$.
Suppose that $\mathrm{Th}_{\forall\exists}(Kv)\subseteq\mathrm{Th}_{\forall\exists}(Lw)$.
\begin{enumerate}[$(a)$]
\item If \Rfour\ holds, then $\mathrm{Th}_{\forall^{\mathbf{k}}\exists}(K,v,\pi_{K})\subseteq\mathrm{Th}_{\forall^{\mathbf{k}}\exists}(L,w,\pi_{L})$.
\item If $(L,w,\pi_L)\models\bHepiZ_0$,
 then ${\rm Th}_{\forall\exists}(K,v,\pi_K)\subseteq{\rm Th}_{\forall\exists}(L,w,\pi_L)$.
\end{enumerate}
\end{proposition}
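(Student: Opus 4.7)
The plan is to apply Lemma \ref{lem:AE_abstract}$(c)$ for part $(b)$ and its natural $\forall^{\mathbf{k}}\exists$-analogue for part $(a)$. The latter is obtained by repeating the proof of Lemma \ref{lem:AE_abstract}, except that the universal type $p(\underline{x})$ used in establishing the implication $(a)\Rightarrow(b)$ is taken only over variables in the residue field sort; this yields the criterion that $\mathrm{Th}_{\forall^{\mathbf{k}}\exists}(M)\subseteq\mathrm{Th}_{\forall^{\mathbf{k}}\exists}(N)$ iff there exist $M^{*}\equiv_{\forall^{\mathbf{k}}\exists}M$ and an embedding $N\hookrightarrow M^{*}$ that is existentially closed for tuples from the residue field sort of $N$. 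It therefore suffices in each case to construct such an $M^{*}$ and such an embedding.

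I would start by taking a $|L|^{+}$-saturated elementary extension $(K^{*},v^{*},\pi^{*})\succcurlyeq(K,v,\pi_{K})$. Its residue field $K^{*}v^{*}$ is then a $|L|^{+}$-saturated elementary extension of $Kv$, so in particular $\mathrm{Th}_{\forall\exists}(K^{*}v^{*})=\mathrm{Th}_{\forall\exists}(Kv)\subseteq\mathrm{Th}_{\forall\exists}(Lw)$, and Lemma \ref{lem:AE_abstract}$(b)$ applied at the residue field level produces an existentially closed embedding $\iota_{0}\colon Lw\hookrightarrow K^{*}v^{*}$. The main step is to lift $\iota_{0}$ to an embedding $\iota\colon(L,w,\pi_{L})\hookrightarrow(K^{*},v^{*},\pi^{*})$ of valued fields sending $\pi_{L}$ to $\pi^{*}$ and inducing $\iota_{0}$ on residue fields: first construct the underlying value group embedding $wL\hookrightarrow v^{*}K^{*}$ sending $w(\pi_{L})$ to $v^{*}(\pi^{*})$ (both are the smallest positive elements, and in $(b)$ the $\mathbb{Z}$-group hypothesis on $wL$ combined with the saturation of $v^{*}K^{*}$ provides enough divisibility above $\mathbb{Z}v^{*}(\pi^{*})$ to accommodate the divisible quotient $wL/\mathbb{Z}w(\pi_{L})$), then extend this compatible datum to a valued field embedding by henselianity and a standard Kaplansky-style amalgamation.

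In $(b)$, the resulting $\iota$ is elementary by the classical Ax--Kochen/Ershov principle in residue characteristic zero (cf.~Proposition \ref{prop:Ftt}$(c)$), hence a fortiori existentially closed, and Lemma \ref{lem:AE_abstract}$(c)$ yields the conclusion. In $(a)$, $\iota$ need not be elementary, but the required residue-field-parametric existential closedness follows from Proposition \ref{prop:Ftt}$(b)$ applied after adjoining constants for the residue field parameters to the language: this shows that under \Rfour\ the existential theory of an $\mathfrak{L}_{\mathrm{val}}(\varpi)$-structure in $\bHepi$ with residue field parameters is controlled by the existential theory of its residue field with the same parameters, so the existential closedness of $\iota_{0}$ transfers to residue-field-parametric existential closedness of $\iota$. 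The main obstacle is this lifting step in $(a)$: in positive equicharacteristic, Hensel's lemma alone cannot handle immediate extensions and wild ramification phenomena, and the amalgamation must be arranged so that \Rfour\ applies — both to produce $\iota$ and to guarantee that it has the correct residue-field-parametric existential closedness.
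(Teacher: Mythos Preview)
Your direct-embedding strategy has a genuine obstruction in case $(a)$, and it is not merely a matter of filling in details: the embedding $\iota\colon(L,w,\pi_L)\hookrightarrow(K^*,v^*,\pi^*)$ need not exist. The failure is already at the value group level. You write that $w(\pi_L)$ and $v^*(\pi^*)$ are ``both the smallest positive elements'', but in $(a)$ the hypothesis on $L$ is only $(L,w)\models\bHen$ and $0<w(\pi_L)<\infty$; nothing forces $wL$ to have a smallest positive element. Concretely, take $K=k(\!(t)\!)$ with $\pi_K=t$ and $L=k(\!(t^{\mathbb{Q}})\!)$ a Hahn series field with $\pi_L=t^1$. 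Then $Kv=Lw=k$, so the residue hypothesis is trivially satisfied, but $wL=\mathbb{Q}$ is divisible while every elementary extension of $vK=\mathbb{Z}$ is a $\mathbb{Z}$-group and admits no embedding of $\mathbb{Q}$. Hence no $\mathfrak{L}_{\rm val}$-embedding $L\to K^*$ exists. Your closing paragraph flags a difficulty with lifting in positive characteristic, but the problem is more basic than immediate extensions or defect: the target is simply the wrong shape.

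The paper avoids this by never attempting to embed $L$ into a model of ${\rm Th}(K)$. Instead it passes through $Kv(t)^h$ and $Lw(t)^h$, whose value groups are both $\mathbb{Z}$. After arranging sections $\zeta_K,\zeta_L$ of the residue maps (via \cite[Proposition 4.5]{ADF23}) and the existentially closed $\varphi_k\colon Lw\hookrightarrow Kv$, one obtains embeddings $\zeta_K'\colon(Kv(t)^h,v_t,t)\to(K,v,\pi_K)$, $\varphi\colon(Lw(t)^h,v_t,t)\to(Kv(t)^h,v_t,t)$ and $\zeta_L'\colon(Lw(t)^h,v_t,t)\to(L,w,\pi_L)$ just by sending $t$ to the indicated element. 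The first two are existentially closed by \cite[Corollary 4.14]{ADF23} (this is where \Rfour\ enters), yielding $\mathrm{Th}_{\forall\exists}(K,v,\pi_K)\subseteq\mathrm{Th}_{\forall\exists}(Lw(t)^h,v_t,t)$ via Lemma~\ref{lem:AE_abstract}. The last arrow $\zeta_L'$ induces an isomorphism on residue fields, which by Lemma~\ref{lem:AE_1} alone suffices to push $\forall^{\mathbf{k}}\exists$-sentences from $Lw(t)^h$ to $L$, with no constraint on $wL$ whatsoever; in $(b)$ one instead checks that $\zeta_L'$ is elementary by Ax--Kochen/Ershov. The key conceptual point you are missing is that the passage to $L$ goes \emph{upward} from the small field $Lw(t)^h$, not downward from a saturated model of ${\rm Th}(K)$.

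For $(b)$ your outline is more plausible, since there $wL$ is a $\mathbb{Z}$-group, $w(\pi_L)$ really is least positive, and everything is in characteristic zero so the AKE embedding lemmas are available. But even then the value group step is not free: $(K,v,\pi_K)\models\bHepi$ only says $\pi_K$ is a uniformizer, so $vK$ could be e.g.\ $\mathbb{Z}\times\mathbb{Z}$ with the lexicographic order, and embedding an arbitrary $\mathbb{Z}$-group into a saturated extension of that, sending least positive to least positive, is not the one-liner you suggest.
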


\begin{proof}
We treat $(a)$ and $(b)$ simultaneously,
where we use that the assumption of $(b)$ implies that $Kv$ and $Lw$ are of characteristic zero so that \Rfour\ holds when restricted to extensions of these (see \cite[Remarks 2.4, 4.18]{ADF23}).
By \cite[Proposition 4.5]{ADF23},
we can replace $(K,v)$ by an elementary extension
to assume without loss of generality that
there is a section
$\zeta_{K}\colon Kv\rightarrow K$
of the residue map of $v$.
Similarly we can assume that
there is a section
$\zeta_{L}\colon Lw\rightarrow L$
of the residue map of $w$.
Replacing further the structure
$(K,v,\zeta_K)$
by an elementary extension
we can assume that in addition it is 
$\max\{|Lw|,\aleph_0\}$-saturated,
and then so is $Kv$,
hence 
the assumption $\mathrm{Th}_{\forall\exists}(Kv)\subseteq\mathrm{Th}_{\forall\exists}(Lw)$
gives an existentially closed embedding
$\varphi_{k}\colon Lw\rightarrow Kv$
by Lemma~\ref{lem:AE_abstract}.

By mapping $t\mapsto\pi_{K}$,
and using the universal property of the henselization,
we extend $\zeta_{K}$
to an embedding of valued fields with distinguished uniformizers
${\zeta}_{K}'\colon (Kv(t)^{h},v_{t},t)\rightarrow(K,v,\pi_{K})$,
\new{where $Kv(t)^{h}$ denotes the henselization, as introduced above}.
By \cite[Corollary 4.14]{ADF23},
\Rfour\ implies that ${\zeta}_{K}'$ is existentially closed.
Similarly,
by mapping $t\mapsto\pi_{L}$,
we extend $\zeta_{L}$ to an embedding of valued fields with distinguished element
${\zeta}_{L}'\colon(Lw(t)^{h},v_{t},t)\rightarrow(L,w,\pi_{L})$.
Note that ${\zeta}_{L}'$ induces an isomorphism on the residue fields.
Finally, by mapping $t\mapsto t$,
we extend $\varphi_{k}$
to an embedding of valued fields with distinguished uniformizers
$\varphi\colon(Lw(t)^{h},v_{t},t)\rightarrow(Kv(t)^{h},v_{t},t)$.
Again by 
\cite[Corollary 4.14]{ADF23},
\Rfour\ implies that $\varphi$ is existentially closed.
The situation looks as follows:
$$
 	\xymatrix{
	(K,v,\pi_{K})
    \ar@{.>}@/_0.7pc/[rd]_{\rm res}
&
  	(Kv(t)^{h},v_{t},t)
  	\ar@{->}[l]_{{\zeta}_{K}'}
    \ar@{.>}@/_-0.7pc/[d]^{\rm res}
&
 	(Lw(t)^{h},v_{t},t)
   	\ar@{->}[l]_{\varphi}
  	\ar@{->}[r]^{{\zeta}_{L}'}
   \ar@{.>}@/_0.7pc/[d]_{\rm res}
	&
	(L,w,\pi_{L})
 \ar@{.>}@/_-0.7pc/[ld]^{\rm res}\\
 &
 Kv
 \ar@{->}[u]
 \ar@{->}[ul]_{\zeta_K}
 &
 Lw
 \ar@{->}[u]
  \ar@{->}[l]_{\varphi_k}
  \ar@{->}[ur]^{\zeta_L}
 &
  }
$$
By Lemma~\ref{lem:AE_abstract},
$$
 	\mathrm{Th}_{\forall^{}\exists}(K,v,\pi_{K})
 \subseteq
	\mathrm{Th}_{\forall^{}\exists}(Kv(t)^{h},v_{t},t)
\subseteq
	\mathrm{Th}_{\forall^{}\exists}(Lw(t)^{h},v_{t},t).
$$
In particular,
$\mathrm{Th}_{\forall^{\mathbf{k}}\exists}(K,v,\pi_{K})
\subseteq\mathrm{Th}_{\forall^{\mathbf{k}}\exists}(Lw(t)^{h},v_{t},t)
$
and we also have
$\mathrm{Th}_{\forall^{\mathbf{k}}\exists}(Lw(t)^{h},v_{t},t)
 \subseteq\mathrm{Th}_{\forall^{\mathbf{k}}\exists}(L,w,\pi_{L})$	
by Lemma~\ref{lem:AE_1} applied to $\zeta_L'$ with $C=\{t\}$,
which concludes case $(a)$.
In case $(b)$,
by the usual Ax--Kochen/Ershov principle for residue characteristic zero, 
$\zeta_L'$
is an elementary embedding (it is an isomorphism on residue fields, and the embedding of value groups $\mathbb{Z}=v_tLw(t)^h\rightarrow wL$ is elementary since the theory of $\mathbb{Z}$-groups is model complete in the language of ordered abelian groups with a constant symbol for the smallest positive element \cite[Theorem 4.1.3]{PD}), and so in particular ${\rm Th}_{\forall\exists}(Lw(t)^h,v_t,t)={\rm Th}_{\forall\exists}(L,w,\pi_L)$.
\end{proof}

\begin{question}
Does Proposition \ref{prof:AE_char0}$(b)$ hold similarly for all the ``standard'' fragments
$\exists,\forall\exists,\exists\forall\exists,\dots$?
Can this be extended to a full Ax--Kochen/Ershov-principle of the following form:
If $(K,v)$ and $(L,w)$ are equicharacteristic zero henselian nontrivially valued fields with
${\rm Th}_{\forall\exists}(Kv)\subseteq{\rm Th}_{\forall\exists}(Lw)$ and
${\rm Th}_{\forall\exists}(vK)\subseteq{\rm Th}_{\forall\exists}(wL)$,
then
${\rm Th}_{\forall\exists}(K,v)\subseteq{\rm Th}_{\forall\exists}(L,w)$,
and similarly for the other standard fragments?
\end{question}

\begin{corollary}\label{lem:AE_4}
Suppose \Rfour\ holds.
Let $(K,v),(L,w)\models\bHen$
with $\mathrm{Th}_{\forall\exists}(Kv)\subseteq\mathrm{Th}_{\forall\exists}(Lw)$.
Then  $\mathrm{Th}_{\forall^{\mathbf{k}}\exists}(K,v)\subseteq\mathrm{Th}_{\forall^{\mathbf{k}}\exists}(L,w)$. 
\end{corollary}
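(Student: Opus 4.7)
To prove ${\rm Th}_{\forall^{\mathbf{k}}\exists}(K,v) \subseteq {\rm Th}_{\forall^{\mathbf{k}}\exists}(L,w)$, the plan is to reduce to Proposition~\ref{prof:AE_char0}(a). The main obstacle is that $(K,v)$ need not possess a uniformizer (since $vK$ may have no smallest positive element), so the proposition cannot be applied with $(K,v)$ as source directly. The key idea is to embed $(K,v)$, with residue field preserved, into a larger henselian valued field that does carry a uniformizer, namely $K(\!(t)\!)$ equipped with the composite valuation that refines the $t$-adic valuation by $v$.

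Concretely, I would equip $K(\!(t)\!)$ with the valuation $v^{*}$ given by
\[
 v^{*}\!\left(\sum_{n \geq n_{0}} a_{n} t^{n}\right) := \bigl(v(a_{n_{0}}),\, n_{0}\bigr) \in vK \oplus_{\mathrm{lex}} \mathbb{Z}
\]
for $a_{n_{0}} \neq 0$, using the lexicographic order with $vK$ dominant. A routine check shows $v^{*}$ is a valuation with value group $vK \oplus_{\mathrm{lex}} \mathbb{Z}$, whose smallest positive element is $v^{*}(t) = (0,1)$ — so $t$ is a uniformizer — and with residue field $Kv$. The valuation $v^{*}$ is henselian as a refinement of the henselian $t$-adic valuation on $K(\!(t)\!)$ by the henselian valuation $v$ on its residue field $K$. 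Therefore $(K(\!(t)\!), v^{*}, t) \models \bHepi$, and the constants inclusion $K \hookrightarrow K(\!(t)\!)$ provides an $\mathfrak{L}_{\mathrm{val}}$-embedding $(K,v) \to (K(\!(t)\!), v^{*})$ (because $v^{*}(a) = (v(a), 0)$ for $a \in K^{\times}$) that restricts to the identity on residue fields.

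By Lemma~\ref{lem:AE_1} applied to this embedding, ${\rm Th}_{\forall^{\mathbf{k}}\exists}(K,v) \subseteq {\rm Th}_{\forall^{\mathbf{k}}\exists}(K(\!(t)\!), v^{*})$. Picking any $\pi_{L} \in L$ with $0 < w(\pi_{L}) < \infty$ (which exists by nontriviality of $w$), I would then apply Proposition~\ref{prof:AE_char0}(a) to the $\bHepi$-model $(K(\!(t)\!), v^{*}, t)$ and the $\mathfrak{L}_{\mathrm{val}}(\varpi)$-structure $(L, w, \pi_{L})$: the residue-field hypothesis reads ${\rm Th}_{\forall\exists}(Kv) \subseteq {\rm Th}_{\forall\exists}(Lw)$, which is given, and \Rfour\ is assumed. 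The resulting inclusion ${\rm Th}_{\forall^{\mathbf{k}}\exists}(K(\!(t)\!), v^{*}, t) \subseteq {\rm Th}_{\forall^{\mathbf{k}}\exists}(L, w, \pi_{L})$, restricted to sentences in $\mathfrak{L}_{\mathrm{val}}$ (which do not mention $\varpi$) and chained with the previous one, yields the desired ${\rm Th}_{\forall^{\mathbf{k}}\exists}(K,v) \subseteq {\rm Th}_{\forall^{\mathbf{k}}\exists}(L,w)$. The only delicate point is verifying henselianity of $v^{*}$, which rests on the standard composition principle for henselian valuations.
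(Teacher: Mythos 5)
Your overall strategy is exactly the paper's: embed $(K,v)$ residue-field-preservingly into a model of $\bHepi$, apply Lemma~\ref{lem:AE_1} to that embedding, and finish with Proposition~\ref{prof:AE_char0}$(a)$ against $(L,w,\pi_L)$. The gap is in the explicit construction of the auxiliary extension: the map $v^{*}$ you define on $K(\!(t)\!)$ is \emph{not a valuation}. With $vK$ dominant in $vK\oplus_{\mathrm{lex}}\mathbb{Z}$, the ultrametric inequality fails: pick $s\in K$ with $v(s)>0$ and set $f=s+s^{-1}t$, $g=-s$. Then $v^{*}(f)=v^{*}(g)=(v(s),0)\geq 0$, but $f+g=s^{-1}t$ has $v^{*}(f+g)=(-v(s),1)<(0,0)$, so $v^{*}(f+g)<\min(v^{*}(f),v^{*}(g))$; equivalently, $\mathcal{O}_{v^{*}}$ is not closed under addition. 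The ``standard composition principle'' you invoke produces the composite of $v_t$ with $v$, whose value group is $\mathbb{Z}\oplus_{\mathrm{lex}}vK$ with the \emph{$t$-adic} factor dominant; that composite is indeed a henselian valuation on $K(\!(t)\!)$ extending $v$ with residue field $Kv$, but then $t$ has value $(1,0)$ while the values $(0,\gamma)$ for $\gamma\in vK_{>0}$ are positive and smaller, so the composite has no uniformizer whenever $vK$ has none --- which is exactly the case you set out to repair. Reversing the lexicographic order to make $(0,1)$ least is not an innocent change: it breaks the valuation axioms, and in fact no Gauss-type extension of $v$ to all of $K(\!(t)\!)$ with value group $vK\oplus_{\mathrm{lex}}\mathbb{Z}$ ($vK$ dominant) exists, since the minimum $\min_n(v(a_n),n)$ it would require need not be attained over an infinite series.

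What is actually needed is an extension $(K',v')\supseteq(K,v)$ whose value group acquires a new positive element lying below every positive element of $vK$, with residue field unchanged; such an extension exists but requires a genuine construction (realizing the cut by a transcendental element and henselizing, or a Hahn-field argument), and this is precisely what the paper delegates to the first sentence of the proof of \cite[Corollary 4.16]{ADF23}. Once your $K(\!(t)\!)$ is replaced by such a $(K',v',\pi_K)\models\bHepi$, the rest of your argument --- Lemma~\ref{lem:AE_1} with $C=\emptyset$, then Proposition~\ref{prof:AE_char0}$(a)$ with an arbitrary $\pi_L\in L^{\times}$ of positive value, then restricting to $\mathfrak{L}_{\mathrm{val}}$-sentences --- is correct and coincides with the paper's proof.
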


\begin{proof} 
There exists an extension
$(K',v')\supseteq(K,v)$
and $\pi_{K}\in K'$
such that $(K',v',\pi_{K})\models\bHepi$
and the induced embedding
$Kv\rightarrow K'v'$ is an isomorphism,
see for example the first sentence in the proof of \cite[Corollary 4.16]{ADF23}.
Lemma \ref{lem:AE_1} with $C=\emptyset$ gives that
$\mathrm{Th}_{\forall^{\mathbf{k}}\exists}(K,v)\subseteq
\mathrm{Th}_{\forall^{\mathbf{k}}\exists}(K',v')$.
Choose any $\pi_L\in L^\times$ with $w(\pi_L)>0$.
As ${\rm Th}_{\forall\exists}(K'v')={\rm Th}_{\forall\exists}(Kv)\subseteq{\rm Th}_{\forall\exists}(Lw)$,
Proposition \ref{lem:AE_3}$(a)$
implies
$\mathrm{Th}_{\forall^{\mathbf{k}}\exists}(K',v',\pi_K)\subseteq\mathrm{Th}_{\forall^{\mathbf{k}}\exists}(L,w,\pi_L)$,
so in particular 
$\mathrm{Th}_{\forall^{\mathbf{k}}\exists}(K',v')\subseteq\mathrm{Th}_{\forall^{\mathbf{k}}\exists}(L,w)$.
\end{proof}

\begin{corollary}\label{cor:reductions}
\begin{enumerate}[$(a)$]
   \item If \Rfour\ holds, there exists a computable map
    $\tau\colon{\rm Sent}_{{\forall^{\mathbf{k}}\exists}}(\mathfrak{L}_{\rm val}(\varpi))\rightarrow{\rm Sent}_{\forall\exists}(\mathfrak{L}_{\rm ring})$
    such that for every $(K,v,\pi)\models\bHepi$ we have
    ${\rm Th}_{{\forall^{\mathbf{k}}\exists}}(K,v,\pi)=\tau^{-1}({\rm Th}_{\forall\exists}(Kv))$.        \item There exists a computable map
    $\tau\colon{\rm Sent}_{\forall\exists}(\mathfrak{L}_{\rm val}(\varpi))\rightarrow{\rm Sent}_{\forall\exists}(\mathfrak{L}_{\rm ring})$
    such that for every $(K,v,\pi)\models\bHepiZ_0$ we have
    ${\rm Th}_{\forall\exists}(K,v,\pi)=\tau^{-1}({\rm Th}_{\forall\exists}(Kv))$.
    \item If \Rfour\ holds, there exists a computable map
    $\tau\colon{\rm Sent}_{{\forall^{\mathbf{k}}\exists}}(\mathfrak{L}_{\rm val})\rightarrow{\rm Sent}_{\forall\exists}(\mathfrak{L}_{\rm ring})$
    such that for every $(K,v)\models\bHen$ we have
    ${\rm Th}_{{\forall^{\mathbf{k}}\exists}}(K,v)=\tau^{-1}({\rm Th}_{\forall\exists}(Kv))$.        
\end{enumerate}
\end{corollary}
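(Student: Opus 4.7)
The plan is to construct $\tau$ in each part by the following three-step recipe. Given a sentence $\varphi$ in the source fragment: first, compute an equivalent prenex form $\forall \bar u\, \psi(\bar u)$ using Remark~\ref{rem:prnx}, where $\bar u$ is a block of universal quantifiers and $\psi$ is existential in the valued-field language; second, treat the free variables $\bar u$ as fresh constant symbols $\bar c$ in the appropriate sorts, and apply a language-uniform version of the relevant existential reduction of Proposition~\ref{prop:Ftt} to $\psi(\bar c)$ in the expanded language, obtaining $\tilde\psi(\bar c)\in\Sent_{\exists}(\mathfrak{L}_{\rm ring}(\bar c))$; third, demote constants back to free variables and re-prepend the universal block, setting $\tau\varphi := \forall \bar u\,\tilde\psi(\bar u)\in\Sent_{\forall\exists}(\mathfrak{L}_{\rm ring})$. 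Computability of $\tau$ is inherited from the language-uniformity of Proposition~\ref{prop:Ftt}, as made precise in Section~\ref{sec:fragments}, and semantic correctness comes from the transfer results quoted below.

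For part~\emph{(a)}, the universal quantifiers of $\varphi\in\Sent_{\forall^{\mathbf{k}}\exists}(\mathfrak{L}_{\rm val}(\varpi))$ range only over the residue-field sort $\mathbf{k}$, so introducing $\bar c$ as residue-sort constants is painless. Proposition~\ref{prop:Ftt}$(b)$ --- whose proof, following \cite{AF23}, is uniform in language expansions by residue-sort constants --- produces $\tilde\psi(\bar c)\in\Sent_{\exists}(\mathfrak{L}_{\rm ring}(\bar c))$ such that $(K,v,\pi,\bar a)\models\psi(\bar c)$ iff $(Kv,\bar a)\models\tilde\psi(\bar c)$ for every $(K,v,\pi)\models\bHepi$ and $\bar a\in(Kv)^n$. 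Substituting variables back for constants and universally quantifying the pointwise equivalence yields $(K,v,\pi)\models\varphi \Longleftrightarrow Kv\models\tau\varphi$. The semantic guarantee that the $\forall^{\mathbf{k}}\exists$-theory of a model of $\bHepi$ is controlled by ${\rm Th}_{\forall\exists}$ of its residue field --- which assures that such a $\tau$ can exist in the first place --- is Proposition~\ref{lem:AE_3}$(a)$. Part~\emph{(c)} is entirely analogous, using Proposition~\ref{prop:Ftt}$(a)$ in the language $\mathfrak{L}_{\rm val}(\bar c)$ (no $\varpi$) and Corollary~\ref{lem:AE_4} for the semantic backbone.

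Part~\emph{(b)} requires more care since the universal quantifiers in $\varphi\in\Sent_{\forall\exists}(\mathfrak{L}_{\rm val}(\varpi))$ may range across all three sorts and $\psi$ carries unrestricted existential complexity across them. In equicharacteristic zero, \Rfour\ is automatic (see the remark used in the proof of Proposition~\ref{prof:AE_char0}$(b)$), and we instead appeal to the classical Ax--Kochen/Ershov principle underlying Proposition~\ref{prop:Ftt}$(c)$. Over $\bHepiZ_0$ the value-group sort is absorbed because the theory of $\mathbb{Z}$-groups with a constant for the smallest positive element is model-complete and decidable; this lets the AKE reduction translate $\mathfrak{L}_{\rm val}(\varpi)$-formulas into $\mathfrak{L}_{\rm ring}$-formulas on the residue field. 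A fragment-sensitive bookkeeping --- keeping existential input existential --- applied to $\psi(\bar c)$ produces the required $\tilde\psi(\bar c)$, and then $\tau\varphi := \forall \bar u\,\tilde\psi(\bar u)$ works by Proposition~\ref{lem:AE_3}$(b)$.

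The main obstacle is the uniform-in-language formulation of the existential reductions of Proposition~\ref{prop:Ftt} and --- for part~\emph{(b)} --- its fragment-sensitive refinement that translates a universal block over the field or value-group sort into a universal block over the residue-field sort while preserving computability. This uniformity is the formal content of Section~\ref{sec:fragments} and is expected to be available from the proofs in \cite{AF23}; once granted, the three-step recipe above mechanically delivers the computable maps $\tau$ in all three parts.
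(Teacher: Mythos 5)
Your construction is genuinely different from the paper's. The paper proves all three parts in one line: it applies the abstract machinery of \cite[Corollary 2.23]{AF23} directly to the fragment-level transfer theorems (Proposition \ref{lem:AE_3} for $(a)$ and $(b)$, Corollary \ref{lem:AE_4} for $(c)$), which converts a statement of the form ``${\rm Th}_{\forall\exists}(Kv)\subseteq{\rm Th}_{\forall\exists}(Lw)$ implies ${\rm Th}_{F}(K,v,\dots)\subseteq{\rm Th}_{F}(L,w,\dots)$ for all models of the relevant theory'' into a computable many-one reduction wholesale, with no formula-by-formula manipulation. Your three-step recipe (prenex, freeze the universal variables as constants, apply an existential reduction, re-quantify) could in principle work for $(a)$ and $(c)$, but it rests on an ingredient you have not established: a version of Proposition \ref{prop:Ftt}$(a)$,$(b)$ for languages expanded by residue-sort constants, uniform and computable in the number of constants, i.e.\ a single $\tilde\psi(\bar u)\in\Form_\exists(\mathfrak{L}_{\rm ring})$ with $(K,v,\pi,\bar a)\models\psi(\bar a)\Leftrightarrow (Kv,\bar a)\models\tilde\psi(\bar a)$ for all models and all residue-field tuples $\bar a$. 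This parametrized reduction is plausible (the embeddings behind Proposition \ref{prop:Ftt} respect residue-field constants), but it is not what Proposition \ref{prop:Ftt} states, and producing it again requires the compactness/search machinery of \cite{AF23} in the expanded languages --- at which point one may as well apply that machinery once, at the $\forall^{\mathbf{k}}\exists$ level, as the paper does.

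For part $(b)$ the recipe fails outright, and this is the genuine gap. A sentence in $\Sent_{\forall\exists}(\mathfrak{L}_{\rm val}(\varpi))$ has its universal block ranging over the valued-field and value-group sorts, not the residue field. After you freeze those variables as constants and reduce the existential matrix to a statement about $Kv$, there is no sort of $Kv$ over which to ``re-prepend the universal block'': an element of $K$ or of $vK$ is not an element of $Kv$, so $\forall\bar u\,\tilde\psi(\bar u)$ interpreted in $Kv$ has no reason to be equivalent to the original sentence. Your fallback --- that the AKE translation underlying Proposition \ref{prop:Ftt}$(c)$ admits a ``fragment-sensitive bookkeeping'' sending $\forall\exists$ input to $\forall\exists$ output --- is precisely the nontrivial content of part $(b)$ and is asserted rather than proved; relative quantifier elimination does not in general preserve quantifier complexity. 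The paper obtains $(b)$ from the semantic transfer Proposition \ref{lem:AE_3}$(b)$, proved by a bespoke embedding argument (sections of the residue map, henselizations, Lemma \ref{lem:AE_abstract}), and only then extracts computability via \cite[Corollary 2.23]{AF23}.
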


\begin{proof}
Parts $(a)$ and $(b)$ are consequences of $(a)$ and $(b)$ of Proposition \ref{prof:AE_char0},
and part $(c)$ is a consequence of
Corollary \ref{lem:AE_4},
where in each case the map $\tau$
can be obtained using \cite[Corollary 2.23]{AF23}.
\end{proof}

\begin{remark}\label{rem:AE}
As a consequence of Corollary \ref{lem:AE_4},
if $(K,v),(L,w)\models\bHen$
and ${\rm Th}_{\forall\exists}(Kv)={\rm Th}_{\forall\exists}(Lw)$, then
${\rm Th}_{\forall^{\mathbf{k}}\exists}(K,v)
={\rm Th}_{\forall^{\mathbf{k}}\exists}(L,w)$ assuming \Rfour.
Under the stronger assumption
${\rm Th}(Kv)={\rm Th}(Lw)$
and in the special case where
$Kv$ (and thus $Lw$) is perfect,
this  ``$\forall^{\mathbf{k}}\exists$-completeness'' was shown in 
\cite[Corollary 5.7]{AF16}
without using \Rfour.

One can show unconditionally that if
$k\preccurlyeq_\exists l$,  then  $(k(t)^h,v_t)\preccurlyeq_\exists(l(t)^h,v_t)$.
That is, the second application of 
\cite[Corollary 4.14]{ADF23}
in the proof of Proposition \ref{lem:AE_3}
can be replaced by a direct argument that does not use \Rfour.

As a special case of Proposition \ref{lem:AE_3}$(a)$,
assuming \Rfour\ we get that 
\begin{equation}\label{eqn:kthktt}
 \mathrm{Th}_{\forall^{\mathbf{k}}\exists}(k(t)^{h},v_{t})=\mathrm{Th}_{\forall^{\mathbf{k}}\exists}(k(\!(t)\!),v_{t}).
\end{equation}
This can be improved in two ways:
Firstly, by a result of Ershov (see e.g.~\cite[Lemma 4.5]{AF16}), we have
$(k(t)^{h},v_{t})\preccurlyeq_{\exists}(k(\!(t)\!),v_{t})$,
and so
$\mathrm{Th}_{\forall\exists}(k(t)^{h},v_{t})\supseteq\mathrm{Th}_{\forall\exists}(k(\!(t)\!),v_{t})$,
which combined with Lemma \ref{lem:AE_1}
gives (\ref{eqn:kthktt})
without assuming \Rfour.
The second improvement is given in the following lemma.
\end{remark}

\begin{lemma}\label{lem:AE_improvement}
Assume \Rfour. For every field $k$,
$\mathrm{Th}_{\forall\exists}(k(t)^{h},v_{t},t)=\mathrm{Th}_{\forall\exists}(k(\!(t)\!),v_{t},t)$.
\end{lemma}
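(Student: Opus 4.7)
The $(\supseteq)$ direction is immediate from Ershov's theorem (cited via \cite[Lemma~4.5]{AF16} just before the lemma): $(k(t)^h, v_t) \preccurlyeq_\exists (k(\!(t)\!), v_t)$ extends to an existentially closed inclusion of $\mathfrak{L}_{\rm val}(\varpi)$-structures since $t$ is interpreted identically, and the standard fact that existential closedness reverses $\forall\exists$-inclusions (as in the proof of $(c)\Rightarrow(a)$ of Lemma~\ref{lem:AE_abstract}) gives $\mathrm{Th}_{\forall\exists}(k(\!(t)\!), v_t, t) \subseteq \mathrm{Th}_{\forall\exists}(k(t)^h, v_t, t)$.

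For the $(\subseteq)$ direction, the plan is to apply the criterion Lemma~\ref{lem:AE_abstract}(c): I will construct an elementary extension $M^* \succcurlyeq (k(t)^h, v_t, t)$ together with an existentially closed embedding $\iota: (k(\!(t)\!), v_t, t) \hookrightarrow M^*$. Taking $M^*$ sufficiently saturated, the existence of \emph{some} embedding $\iota$ is standard from Ershov: the quantifier-free type of $(k(\!(t)\!), v_t, t)$ over $(k(t)^h, v_t, t)$ is finitely satisfiable in $(k(t)^h, v_t, t) \subseteq M^*$ (any finite conjunction of its formulas is an existential formula realized in $k(\!(t)\!)$, hence realized already in $k(t)^h$), and is therefore realized in $M^*$ by saturation, yielding an $\mathfrak{L}_{\rm val}(\varpi)$-embedding $\iota$ fixing $(k(t)^h, v_t, t)$.

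The core of the proof is to upgrade $\iota$ to an existentially closed embedding, and this is where \Rfour\ is decisive. Since $k(\!(t)\!)$ is large as a nontrivially valued henselian field, it suffices to exhibit a valuation $u$ on $M^*$ that is trivial on $\iota(k(\!(t)\!))$ and whose residue field equals $\iota(k(\!(t)\!))$. The natural candidate is the coarsening of the valuation on $M^*$ by the convex subgroup $\mathbb{Z}$ of its value group (a brief check on $\mathbb{Z}$-groups shows that in any $\mathbb{Z}$-group extending $\mathbb{Z}$, the subgroup $\mathbb{Z}$ equals its own convex hull and so is convex). Since every nonzero element of $k(\!(t)\!)$ has integer $v_t$-value, $u$ is trivial on $\iota(k(\!(t)\!))$, and its residue field contains $\iota(k(\!(t)\!))$.

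The hard part will be forcing the residue field of $u$ to be exactly $\iota(k(\!(t)\!))$. On a naive saturated elementary extension, the ``standard part'' construction yields a complete henselian valued field whose residue field is an elementary extension $k^*\succcurlyeq k$ of $k$, hence isomorphic to something of the shape $k^*(\!(t)\!)$, which strictly contains $\iota(k(\!(t)\!))$. To close the gap, I plan to choose $M^*$ more carefully --- for instance as a particular ultrapower whose ``standard part'' lands inside $k(\!(t)\!)$ --- or, failing that, to compose $u$ with a further place on its residue field coming from the large-field structure of $k(\!(t)\!)$ (a theorem of Pop guarantees such a place $k^*(\!(t)\!) \to k(\!(t)\!) \cup \{\infty\}$ trivial on $k(\!(t)\!)$ via an appropriate specialization). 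Once this identification is in place, \Rfour\ yields existential closedness of $\iota$, and Lemma~\ref{lem:AE_abstract}(c) concludes the proof.
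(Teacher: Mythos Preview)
Your overall strategy is the same as the paper's: embed $(k(\!(t)\!),v_t,t)$ into an elementary extension $M^*\succcurlyeq(k(t)^h,v_t,t)$, show this embedding is existentially closed, and conclude via Lemma~\ref{lem:AE_abstract}. The $(\supseteq)$ direction and the construction of the embedding $\iota$ are fine and essentially identical to what the paper does.

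The gap is precisely the step you flag as ``the hard part''. Neither of your proposed fixes works. For the second fix: there is no theorem of Pop producing a place $k^*(\!(t)\!)\to k(\!(t)\!)\cup\{\infty\}$ over $k(\!(t)\!)$. Pop's results on large fields give \emph{existential closedness}, not places, and in any case the extension $k^*(\!(t)\!)/k(\!(t)\!)$ is not finitely generated when $k^*\neq k$, so geometric specialization arguments do not apply. For the first fix: any sufficiently saturated $M^*\succcurlyeq(k(t)^h,v_t,t)$ will have residue field a proper elementary extension of $k$, so the coarsening you describe will always have residue field strictly larger than $\iota(k(\!(t)\!))$; there is no clever choice of ultrapower that avoids this.

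The paper does not try to apply \Rfour\ in its raw form. Instead it invokes \cite[Corollary~4.14]{ADF23}, which (under \Rfour) gives existential closedness of an embedding of equicharacteristic henselian valued fields provided the valuation ring of the source is \emph{excellent}. The point you are missing is that $k[\![t]\!]$ is an excellent ring, and it is this commutative-algebraic input --- not a bare place construction --- that makes the argument go through. Once you cite that corollary, the proof is one line.
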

\begin{proof}
Since $(k(t)^{h},v_{t},t)\preccurlyeq_{\exists}(k(\!(t)\!),v_{t},t)$,
there exists an elementary extension $(K,v,\pi_{v})\succcurlyeq(k(t)^{h},v_{t},t)$
and an $\mathfrak{L}_{\mathrm{val}}(\varpi)$-embedding
$(k(\!(t)\!),v_{t},t)\rightarrow(K,v,\pi_{v})$.
By \cite[Corollary 4.14]{ADF23},
since $k[\![t]\!]$ is excellent, this embedding is existentially closed.
Thus
$\mathrm{Th}_{\forall\exists}(k(t)^{h},v_{t},t)\supseteq\mathrm{Th}_{\forall\exists}(k(\!(t)\!),v_{t},t)\supseteq
\mathrm{Th}_{\forall\exists}(K,v,\pi_v)=\mathrm{Th}_{\forall\exists}(k(t)^{h},v_{t},t)$ by
Lemma \ref{lem:AE_abstract}.
\end{proof}

\begin{lemma}\label{lem:AE_kth}
Assume \Rfour\
and let $k,k'$ be fields.
If $k\equiv k'$ then
${\rm Th}_{\forall\exists}(k(t)^{h},v_{t},t)={\rm Th}_{\forall\exists}(k'(t)^{h},v_{t},t)$.
\end{lemma}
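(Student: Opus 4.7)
The plan is to follow the proof of Proposition~\ref{lem:AE_3}(a) essentially verbatim, but with two refinements that together buy us $\forall\exists$ rather than merely $\forall^{\mathbf{k}}\exists$: (i) the stronger hypothesis $k\equiv k'$ allows the existentially closed embedding of residue fields produced in Proposition~\ref{lem:AE_3} to be replaced by an elementary one; and (ii) the very special shape of our situation eliminates the last step in the proof of Proposition~\ref{lem:AE_3}(a) that forced the weakening from $\forall\exists$ to $\forall^{\mathbf{k}}\exists$. Concretely, writing $(K,v,\pi_K)=(k(t)^h,v_t,t)$ and $(L,w,\pi_L)=(k'(t)^h,v_t,t)$, we do not need to pass $(L,w,\pi_L)$ to any elementary extension, because $L$ already \emph{equals} $Lw(t)^h$ with distinguished uniformizer $t$; the analogue of the map $\zeta_L'$ in the proof of Proposition~\ref{lem:AE_3}(a) (which was responsible for the passage to $\forall^{\mathbf{k}}\exists$) is here just the identity on $(L,w,\pi_L)$.

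By symmetry it suffices to prove $\mathrm{Th}_{\forall\exists}(K,v,\pi_K)\subseteq\mathrm{Th}_{\forall\exists}(L,w,\pi_L)$. As in the proof of Proposition~\ref{lem:AE_3}, pass $(K,v,\pi_K)$ to a sufficiently saturated elementary extension $(K^{*},v^{*},\pi^{*})$ admitting a section $\zeta\colon K^{*}v^{*}\rightarrow K^{*}$ of the residue map. Since $K^{*}v^{*}\equiv k\equiv k'=Lw$ and $K^{*}v^{*}$ is saturated enough, there exists an elementary embedding $\varphi_k\colon Lw\rightarrow K^{*}v^{*}$, which is in particular existentially closed. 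Again as in the proof of Proposition~\ref{lem:AE_3}, extend $\zeta$ by $t\mapsto\pi^{*}$ to $\zeta'\colon(K^{*}v^{*}(t)^h,v_t,t)\rightarrow(K^{*},v^{*},\pi^{*})$ and extend $\varphi_k$ by $t\mapsto t$ to $\varphi\colon(L,w,\pi_L)=(Lw(t)^h,v_t,t)\rightarrow(K^{*}v^{*}(t)^h,v_t,t)$; both are existentially closed by \Rfour. The composite $\zeta'\circ\varphi$ is then an existentially closed embedding $(L,w,\pi_L)\rightarrow(K^{*},v^{*},\pi^{*})$ into an elementary extension of $(K,v,\pi_K)$, and the desired inclusion follows from the implication $(c)\Rightarrow(a)$ of Lemma~\ref{lem:AE_abstract}.

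The main conceptual point to verify carefully is step (ii) above: one has to be sure that the construction on the $L$ side really lands in $(L,w,\pi_L)$ itself, not in some proper henselization that would then require a further (lossy) embedding back into $(L,w,\pi_L)$. Once this is observed, the rest is a bookkeeping exercise inside the argument of Proposition~\ref{lem:AE_3}(a), with ``existentially closed'' at the residue field level replaced by ``elementary'' thanks to $k\equiv k'$.
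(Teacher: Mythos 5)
Your proof is correct, but it takes a genuinely different route from the paper's. The paper first applies the Keisler--Shelah theorem to reduce to the case where $k'$ is an ultrapower $k^{\mathcal{U}}$ of $k$, and then exploits the concrete embedding $f\colon(k^{\mathcal{U}}(t)^{h},v_{t},t)\rightarrow(k(t)^{h},v_{t},t)^{\mathcal{U}}$ over $k(t)^{h}$: this single map yields the inclusion $\mathrm{Th}_{\forall\exists}(k^{\mathcal{U}}(t)^{h},v_{t},t)\subseteq\mathrm{Th}_{\forall\exists}(k(t)^{h},v_{t},t)$ \emph{unconditionally} (it exhibits $(k(t)^{h},v_{t})\preccurlyeq_{\exists}(k^{\mathcal{U}}(t)^{h},v_{t})$), while \Rfour\ enters only for the reverse inclusion through one application of \cite[Corollary 4.14]{ADF23} to $f$, which induces an isomorphism of residue fields. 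You instead rerun the proof of Proposition~\ref{lem:AE_3}$(a)$ with saturation, a section, and two applications of \cite[Corollary 4.14]{ADF23} in each direction; your key observation --- that $(k'(t)^{h},v_{t},t)$ already \emph{is} $(Lw(t)^{h},v_{t},t)$, so the lossy map $\zeta_{L}'$ responsible for the weakening to $\forall^{\mathbf{k}}\exists$ degenerates to the identity --- is exactly right and is what upgrades the conclusion to full $\forall\exists$. Two remarks on what each approach buys: the paper's argument isolates which half of the equivalence actually needs \Rfour; your argument, on the other hand, never needs $\varphi_{k}$ to be elementary, only existentially closed, so replacing ``$k\equiv k'$ plus saturation'' by ``$\mathrm{Th}_{\forall\exists}(k)\subseteq\mathrm{Th}_{\forall\exists}(l)$ plus Lemma~\ref{lem:AE_abstract}$(a)\Rightarrow(b)$'' turns it into a direct proof of the stronger Lemma~\ref{lem:AEth}, which the paper instead derives from Lemma~\ref{lem:AE_kth} via Corollary~\ref{cor:AE} and Lemma~\ref{lem:AnE_abstract}.
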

\begin{proof}
By the Keisler--Shelah theorem \cite[Theorem 8.5.10]{Hodges}
it suffices to show that
$(k(t)^{h},v_{t},t)\equiv_{\forall\exists}(k^{\mathcal{U}}(t)^{h},v_{t},t)$
for every ultrapower $k^\mathcal{U}$ of $k$.
The embedding $k\rightarrow k^\mathcal{U}$
extends uniquely to an embedding 
$(k(t)^h,v_t,t)\rightarrow(k^\mathcal{U}(t)^h,v_t,t)$.
The inclusion $k\rightarrow k(t)^h$ on the other hand
extends to an embedding
$k^\mathcal{U}\rightarrow (k(t)^h)^\mathcal{U}$
and then further to an embedding
$f\colon(k^\mathcal{U}(t)^h,v_t,t)\rightarrow ((k(t)^h)^\mathcal{U},v_t^\mathcal{U},t^\mathcal{U})=(k(t)^h,v_t,t)^\mathcal{U}$
over $k(t)^h$,
which immediately shows that
$(k(t)^{h},v_t)\preccurlyeq_{\exists}(k^{\mathcal{U}}(t)^{h},v_t)$,
hence
${\rm Th}_{\forall\exists}(k^\mathcal{U}(t)^{h},v_{t},t)\subseteq{\rm Th}_{\forall\exists}(k(t)^{h},v_{t},t)$
by Lemma \ref{lem:AE_abstract}.
Moreover, the embedding $f$ induces an isomorphism of residue fields
$k^\mathcal{U}(t)^hv_t=k^\mathcal{U}= (k(t)^h)^\mathcal{U}v_t^\mathcal{U}$,
hence $f$ is existentially closed by 
\cite[Corollary 4.14]{ADF23},
and therefore
also
${\rm Th}_{\forall\exists}(k^\mathcal{U}(t)^{h},v_{t},t)\supseteq{\rm Th}_{\forall\exists}(k(t)^{h},v_{t},t)$,
again by Lemma \ref{lem:AE_abstract}.
\end{proof}

\begin{lemma}\label{lem:AEth}
Assume \Rfour. Let $k,l$ be fields.
If $\mathrm{Th}_{\forall\exists}(k)\subseteq\mathrm{Th}_{\forall\exists}(l)$,
then $\mathrm{Th}_{\forall\exists}(k(t)^{h},v_{t},t)\subseteq\mathrm{Th}_{\forall\exists}(l(t)^{h},v_{t},t)$.
\end{lemma}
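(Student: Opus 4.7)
The plan is to combine the abstract criterion of Lemma~\ref{lem:AE_abstract} with the lifting of existentially closed embeddings from residue field to henselization provided by \cite[Corollary 4.14]{ADF23}, which is the main consequence of \Rfour\ used throughout this section, together with Lemma~\ref{lem:AE_kth}. Concretely, by the implication $(a)\Rightarrow(c)$ of Lemma~\ref{lem:AE_abstract} applied to the hypothesis $\mathrm{Th}_{\forall\exists}(k)\subseteq\mathrm{Th}_{\forall\exists}(l)$, I would first produce a field $k^{*}\equiv k$ together with an existentially closed $\mathfrak{L}_{\rm ring}$-embedding $\iota\colon l\rightarrow k^{*}$.

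Next, sending $t\mapsto t$ and using the universal property of the henselization at the $t$-adic valuation, $\iota$ extends uniquely to an $\mathfrak{L}_{\rm val}(\varpi)$-embedding
\[
 \iota'\colon (l(t)^{h},v_{t},t)\longrightarrow (k^{*}(t)^{h},v_{t},t).
\]
The residue field map induced by $\iota'$ is again $\iota$, hence existentially closed. So \Rfour, via \cite[Corollary 4.14]{ADF23}, implies that $\iota'$ is itself an existentially closed embedding of valued fields. Applying $(c)\Rightarrow(a)$ of Lemma~\ref{lem:AE_abstract} with $M=M^{*}=(k^{*}(t)^{h},v_{t},t)$ and $N=(l(t)^{h},v_{t},t)$ then yields
\[
 \mathrm{Th}_{\forall\exists}(k^{*}(t)^{h},v_{t},t)\;\subseteq\;\mathrm{Th}_{\forall\exists}(l(t)^{h},v_{t},t).
\]

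Finally, since $k\equiv k^{*}$, Lemma~\ref{lem:AE_kth} (also relying on \Rfour) gives $\mathrm{Th}_{\forall\exists}(k(t)^{h},v_{t},t)=\mathrm{Th}_{\forall\exists}(k^{*}(t)^{h},v_{t},t)$, and concatenating these inclusions delivers the desired conclusion. The only nontrivial step is the verification that $\iota'$ is existentially closed; this is precisely the role of \Rfour\ in the argument, invoked through the equicharacteristic henselian lifting principle of \cite[Corollary 4.14]{ADF23}, exactly as in the proofs of Proposition~\ref{lem:AE_3}, Lemma~\ref{lem:AE_improvement}, and Lemma~\ref{lem:AE_kth}.
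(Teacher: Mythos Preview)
Your proof is correct and takes a genuinely different route from the paper's. The paper proceeds via Corollary~\ref{cor:AE}: it passes to an $l^{*}\succcurlyeq l$ into which, for every finite $B\subseteq l^{*}$, some $k_{B}\succcurlyeq k$ embeds with image containing $B$; it then verifies criterion~\ref{lem:AnE_abstract}$(e)$ for $N=(l^{*}(t)^{h},v_{t},t)$ by exploiting that $l^{*}(t)^{h}$ is the direct limit of the $l'(t)^{h}$ over finitely generated $l'\subseteq l^{*}$, so that any finite tuple in $N$ already sits in some $\mathbb{F}(B)(t)^{h}$ and hence in the image of $k_{B}(t)^{h}$. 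Lemma~\ref{lem:AE_kth} is then used twice, once for $l^{*}$ vs.\ $l$ and once for $k_{B}$ vs.\ $k$, but \cite[Corollary~4.14]{ADF23} is not invoked directly in this step.

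Your argument instead reuses the lifting mechanism from the proof of Proposition~\ref{lem:AE_3}: get a single existentially closed $\iota\colon l\hookrightarrow k^{*}\equiv k$ from Lemma~\ref{lem:AE_abstract}$(c)$, extend it to $\iota'$ on the henselizations, and apply \Rfour\ through \cite[Corollary~4.14]{ADF23} (exactly as for the map $\varphi$ in Proposition~\ref{lem:AE_3}) to conclude that $\iota'$ is existentially closed; then Lemma~\ref{lem:AE_abstract} and Lemma~\ref{lem:AE_kth} finish. This is shorter and more conceptual, at the cost of a second direct appeal to \cite[Corollary~4.14]{ADF23}. The paper's route trades that appeal for a finitary direct-limit argument, which is perhaps more self-contained given the tools already assembled in Section~\ref{sec:criteria}. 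Both arguments rely on \Rfour\ through Lemma~\ref{lem:AE_kth} in any case.
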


\begin{proof}
By Corollary \ref{cor:AE}
there exists $l\preccurlyeq l^*$ such that for every finite $B\subseteq l^*$ there exists $k_B\succcurlyeq k$ and an $\mathfrak{L}_{\rm ring}$-embedding $f_B\colon k_B\rightarrow l^*$ with $B\subseteq f_B(k_B)$.
By Lemma \ref{lem:AE_kth},
$(l^{*}(t)^h,v_t,t)\equiv_{\forall\exists}(l(t)^h,v_t,t)$,
so it suffices to verify Lemma \ref{lem:AnE_abstract}$(e)$ for $M=(k(t)^h,v_t,t)$, $N=(l^*(t)^h,v_t,t)$,
and $N^*=N$.
So let $b_1,\dots,b_n\in l^{*}(t)^{h}$.
As $l^*(t)^h$ is the direct limit of
$l'(t)^h$ for $l'$ running over finitely generated subfields of $l^*$, there exists a finite $B\subseteq l^*$ such that
$b_1,\dots,b_n\in \mathbb{F}(B)(t)^h$,
where $\mathbb{F}$ is the prime field of $l$.
The map $f_B$ extends to an $\mathfrak{L}_{\rm val}(\varpi)$-embedding
$f\colon(k_{B}(t)^h,v_{t},t)\rightarrow(l^*(t)^h,v_{t},t)$,
and $b_1,\dots,b_n\in f(k_{B}(t)^h)$.
So since
$(k_{B}(t)^{h},v_{t},t)\equiv_{\forall\exists}(k(t)^{h},v_{t},t)=M$
by Lemma \ref{lem:AE_kth},
$M_{\underline{b}}:=(k_{B}(t)^h,v_t,t)$ 
satisfies Lemma \ref{lem:AnE_abstract}$(e)$.
\end{proof}

\begin{proposition}\label{prop:AE_Hepi}
Assume \Rfour.
Let $(K,v,\pi_{K}),(L,w,\pi_{L})\models\bHepi$
such that $wL\cong\mathbb{Z}$ and 
$\mathcal{O}_w$ is excellent.
If $\mathrm{Th}_{\forall\exists}(Kv)\subseteq\mathrm{Th}_{\forall\exists}(Lw)$,
then $\mathrm{Th}_{\forall\exists}(K,v,\pi_{K})\subseteq\mathrm{Th}_{\forall\exists}(L,w,\pi_{L})$.
\end{proposition}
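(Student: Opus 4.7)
The plan is to prove the inclusion of $\forall\exists$-theories via the sandwich
\[
\mathrm{Th}_{\forall\exists}(K,v,\pi_K) \subseteq \mathrm{Th}_{\forall\exists}(Kv(t)^{h}, v_t, t) \subseteq \mathrm{Th}_{\forall\exists}(Lw(t)^{h}, v_t, t) \subseteq \mathrm{Th}_{\forall\exists}(L, w, \pi_L),
\]
whose middle step is immediate from Lemma \ref{lem:AEth} applied to the hypothesis $\mathrm{Th}_{\forall\exists}(Kv)\subseteq\mathrm{Th}_{\forall\exists}(Lw)$.

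For the first step I would mimic the opening argument of Proposition \ref{lem:AE_3}: after passing to an elementary extension $(K^{*},v^{*},\pi_{K}^{*})\succcurlyeq(K,v,\pi_K)$ carrying a section $\zeta_{K^{*}}\colon K^{*}v^{*}\to K^{*}$ of the residue map (by \cite[Proposition 4.5]{ADF23}), this extends via the universal property of henselization to an $\mathfrak{L}_{\mathrm{val}}(\varpi)$-embedding $(K^{*}v^{*}(t)^{h},v_t,t)\to(K^{*},v^{*},\pi_{K}^{*})$ sending $t\mapsto\pi_{K}^{*}$, and \Rfour\ together with \cite[Corollary 4.14]{ADF23} makes it existentially closed. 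Lemma \ref{lem:AE_abstract} gives the desired $\forall\exists$-containment between the corresponding theories, and then Lemma \ref{lem:AEth} (applied to $Kv\preccurlyeq K^{*}v^{*}$) together with the invariance of $\forall\exists$-theories under elementary extension lets me replace $K^{*}$ and $K^{*}v^{*}$ by $K$ and $Kv$.

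For the third step, which I expect to be the main obstacle, I would pass through the $w$-adic completion $\hat L$ of $L$. Since $(L,w)$ is equicharacteristic henselian with $wL=\mathbb{Z}$, the valuation is separated and Cohen's structure theorem identifies $\hat L$ with $Lw(\!(\pi_L)\!)$, giving an $\mathfrak{L}_{\mathrm{val}}(\varpi)$-embedding $(L,w,\pi_L)\hookrightarrow(Lw(\!(t)\!),v_t,t)$ with $\pi_L\mapsto t$. Since $\mathcal{O}_w$ is excellent, \Rfour\ together with \cite[Corollary 4.14]{ADF23} makes this embedding existentially closed. Combining with the $\forall\exists$-equivalence $(Lw(\!(t)\!),v_t,t)\equiv_{\forall\exists}(Lw(t)^{h},v_t,t)$ from Lemma \ref{lem:AE_improvement} and the implication $(c)\Rightarrow(a)$ of Lemma \ref{lem:AE_abstract} (whose proof only needs $\forall\exists$-equivalence rather than full elementary equivalence of $M$ and $M^{*}$) yields the third inclusion. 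The delicate point is that both hypotheses $wL=\mathbb{Z}$ and $\mathcal{O}_w$ excellent are essential here: the former to pin down $\hat L$ as a Laurent series field, and the latter together with \Rfour\ to control the completion map; the remainder of the argument is a reassembly of Proposition \ref{lem:AE_3}, Lemma \ref{lem:AEth}, and Lemma \ref{lem:AE_improvement}.
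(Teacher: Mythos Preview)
Your proposal is correct and follows essentially the same three-step sandwich as the paper's proof: the paper likewise (after replacing $(K,v,\pi_K)$ by an elementary extension carrying a section) embeds $(Kv(t)^h,v_t,t)$ existentially closedly into $(K,v,\pi_K)$ via \cite[Corollary 4.14]{ADF23}, applies Lemma~\ref{lem:AEth} for the middle step, and then uses the completion embedding $(L,w,\pi_L)\hookrightarrow(Lw(\!(t)\!),v_t,t)$ together with Lemma~\ref{lem:AE_improvement} and excellence of $\mathcal{O}_w$ for the last. The only cosmetic differences are that the paper justifies existential closedness of the first embedding by excellence of $\mathcal{O}_{v_t}$ rather than \Rfour\ directly, and handles the passage to the elementary extension of $K$ implicitly rather than invoking Lemma~\ref{lem:AEth} (or~\ref{lem:AE_kth}) to return to $Kv$.
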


\begin{proof}
Let $k=Kv$ and $l=Lw$.
By \cite[Proposition 4.5]{ADF23},
replacing $(K,v,\pi_K)$ by an elementary extension,
there exists a section
$k\rightarrow K$ of the residue map of $v$,
which extends to an
$\mathfrak{L}_{\mathrm{val}}(\varpi)$-embedding
$(k(t)^{h},v_{t},t)\rightarrow(K,v,\pi_{K})$.
By \cite[Corollary 4.14]{ADF23},
since $\mathcal{O}_{v_t}$ is excellent,
this embedding is existentially closed.
Therefore
$\mathrm{Th}_{\forall\exists}(K,v,\pi_{K})\subseteq\mathrm{Th}_{\forall\exists}(k(t)^{h},v_{t},t)$ 
by Lemma \ref{lem:AE_abstract}.
By Lemma \ref{lem:AEth} and Lemma~\ref{lem:AE_improvement},
$$
 \mathrm{Th}_{\forall\exists}(k(t)^{h},v_{t},t)\subseteq\mathrm{Th}_{\forall\exists}(l(t)^{h},v_{t},t)=\mathrm{Th}_{\forall\exists}(l(\!(t)\!),v_{t},t).
$$ 
Since $wL\cong\mathbb{Z}$, there is also an $\mathfrak{L}_{\mathrm{val}}(\varpi)$-embedding
$(L,w,\pi_{L})\rightarrow(l(\!(t)\!),v_{t},t)$,
obtained by completing,
see \cite[Propositions 4.2 and 4.3]{ADF23}.
By \cite[Corollary 4.14]{ADF23},
since $\mathcal{O}_w$ is excellent, 
this embedding is existentially closed.
Therefore
$\mathrm{Th}_{\forall\exists}(l(\!(t)\!),v_{t},t)\subseteq\mathrm{Th}_{\forall\exists}(L,w,\pi_{L})$,
again by Lemma \ref{lem:AE_abstract}.
\end{proof}

\begin{remark}
We stress that here we do not obtain computable reductions like in Corollary \ref{cor:reductions},
since the condition $wL\cong\mathbb{Z}$
in Proposition \ref{prop:AE_Hepi} is not elementary.
\end{remark}

\begin{lemma}\label{lem:A1E_kth}
Assume \Rfour.
Let $k,l$ be fields.
If $\mathrm{Th}_{\forall_{1}\exists}(k)\subseteq\mathrm{Th}_{\forall_{1}\exists}(l)$,
then $\mathrm{Th}_{\forall_{1}\exists}(k(t)^{h},v_{t})\subseteq\mathrm{Th}_{\forall_{1}\exists}(l(t)^{h},v_{t})$.
\end{lemma}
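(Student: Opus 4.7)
Following the template of Lemma~\ref{lem:AEth}, the plan is to first apply Lemma~\ref{lem:AnE_abstract}$(d)$ with $n=1$ to the hypothesis $\mathrm{Th}_{\forall_1\exists}(k) \subseteq \mathrm{Th}_{\forall_1\exists}(l)$, obtaining an elementary extension $l^* \succcurlyeq l$ such that every $b \in l^*$ lies in the image of some $\mathfrak{L}_{\rm ring}$-embedding $f_b \colon k_b \to l^*$ with $k_b \succcurlyeq k$. Since $l \equiv l^*$, Lemma~\ref{lem:AE_kth} (applied in the language with $\varpi$ and then restricted to the language without it) yields $(l^*(t)^h, v_t) \equiv_{\forall_1\exists} (l(t)^h, v_t)$, reducing the goal to $\mathrm{Th}_{\forall_1\exists}(k(t)^h, v_t) \subseteq \mathrm{Th}_{\forall_1\exists}(l^*(t)^h, v_t)$. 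I would verify this via Lemma~\ref{lem:AnE_abstract}$(f)$ with $M = (k(t)^h, v_t)$, $N = (l^*(t)^h, v_t)$, $n = 1$, and $N_b := N$ throughout.

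The interesting case is $b$ in the field sort of $N$; the residue-field and value-group sort cases are immediate. Using the standard fact that $l^*$ is relatively algebraically closed in $l^*(t)^h$ (verified by a direct minimal-polynomial argument using that the residue field is $l^*$), every $b \in l^*(t)^h \setminus l^*$ is transcendental over $l^*$, and I would split by $v_t(b)$. If $b \in l^*$, extend $f_b$ by $t \mapsto t$, as in Lemma~\ref{lem:AEth}. If $b \notin l^*$ and $v_t(b) \geq 1$, pick any $\alpha \in l^*$ with its $k_\alpha, f_\alpha$, and extend $f_\alpha$ to an $\mathfrak{L}_{\rm val}$-embedding $(k_\alpha(t)^h, v_t) \to (l^*(t)^h, v_t)$ by $t \mapsto b$, with value-group embedding $1 \mapsto v_t(b)$; this is a valid $\mathfrak{L}_{\rm val}$-embedding since $b$ is transcendental over $f_\alpha(k_\alpha)$. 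If $v_t(b) = 0$, let $\bar b \in l^*$ be the residue and use the associated $k_{\bar b}, f_{\bar b}$; extend $f_{\bar b}$ by $t \mapsto b - \bar b$ (which is transcendental over $l^*$ with positive value), so that both $\bar b$ and $b - \bar b$, and hence $b$, land in the image. If $v_t(b) < 0$, apply the previous sub-case to $b^{-1}$ and observe that $b$ lies in the (subfield) image. In each case $M_b = (k_*(t)^h, v_t)$ for some $k_* \succcurlyeq k$, so $M_b \equiv_{\forall_1\exists} M$ by Lemma~\ref{lem:AE_kth}.

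The main obstacle is the weakness of the $\forall_1\exists$ hypothesis compared to the $\forall\exists$ hypothesis of Lemma~\ref{lem:AEth}: whereas Corollary~\ref{cor:AE} provides a single embedding capturing an arbitrary finite subset of $l^*$, here Lemma~\ref{lem:AnE_abstract}$(d)$ only captures one element at a time, and one cannot in general absorb a finitely generated subfield $L \subseteq l^*$ into the image of some $k' \equiv k$. The workaround exploits the freedom in choosing where to send $t$: by routing through $t \mapsto b$, $t \mapsto b - \bar b$, or passing to $b^{-1}$, the task of hitting the ``transcendental part'' of $b$ is absorbed into the automatic inclusion of $f(t)$ in the image, leaving at most a single residue-field element ($\bar b$ when $v_t(b)=0$, or none otherwise) to be captured by the $\mathfrak{L}_{\rm ring}$-embedding $k_* \to l^*$.
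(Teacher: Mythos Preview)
Your proposal is correct and takes essentially the same approach as the paper: apply Lemma~\ref{lem:AnE_abstract}$(d)$ to the hypothesis, use Lemma~\ref{lem:AE_kth} to pass to $l^*$, then verify the criterion of Lemma~\ref{lem:AnE_abstract} by extending each $f_b$ to the henselization via a judicious choice of image for $t$. The paper's case split is slightly more economical---it writes $c=b+u$ with $b\in l^*$ the residue and $v_t(u)>0$ (after replacing $c$ by $c^{-1}$ if necessary), then branches only on $u=0$ versus $u\neq 0$, using criterion $(e)$ rather than $(f)$---but your four-case split and your explicit handling of the residue-field and value-group sorts amount to the same argument.
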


\begin{proof}
By Lemma \ref{lem:AnE_abstract}$(d)$,
there exists $l^{*}\succcurlyeq l$ such that for every $b\in l^{*}$ there exists $k_{b}\succcurlyeq k$, $a_b\in k_b$ and an $\mathfrak{L}_{\mathrm{ring}}$-embedding
$f_{b}\colon k_{b}\rightarrow l^{*}$ 
with $f_b(a_b)=b$.
By Lemma \ref{lem:AE_kth},
$(l^{*}(t)^h,v_t)\equiv_{\forall\exists}(l(t)^h,v_t)$,
so it suffices to verify Lemma \ref{lem:AnE_abstract}$(e)$ for $M=(k(t)^h,v_t)$,
$N=(l^*(t)^h,v_t)$ and $n=1$, with $N^*:=N$.

So let $c\in l^{*}(t)^{h}$.
Without loss of generality, $v_t(c)\geq0$,
so $c=b+u$ for $b\in l^{*}$ and $v_t(u)>0$.
If $u=0$, then $c=b\in l^{*}$,
so
$f_{b}$ may be extended to an $\mathfrak{L}_{\mathrm{val}}(\varpi)$-embedding
$f_b'\colon(k_{b}(t)^{h},v_{t},t)\rightarrow(l^{*}(t)^{h},v_{t},t)$,
and $f_b'(a_b)=f_b(a_b)=b=c$.
Otherwise, if $u\neq 0$, 
then $u$ is transcendental over $l^*$
and $v_t|_{l^*(u)}$ is the $u$-adic valuation on $l^*(u)$, 
so we may extend $f_{b}$ to an $\mathfrak{L}_{\mathrm{val}}$-embedding 
$f_{b}''\colon(k_{b}(t)^{h},v_{t})\rightarrow(l^{*}(t)^{h},v_{t})$
with $f_b''(t)=u$.
Then $f_{b}''(a_{b}+t)=f_{b}(a_{b})+f_{b}(t)=b+u=c$.

In both cases,
$(k_{b}(t)^{h},v_{t})\equiv_{\forall\exists}(k(t)^{h},v_{t})$
by Lemma \ref{lem:AE_kth},
in particular
$(k_{b}(t)^{h},v_{t})\equiv_{\forall_{1}\exists}(k(t)^{h},v_{t})$.
Therefore, 
$M_b:=(k_b(t)^h,v_t)$ 
satisfies Lemma \ref{lem:AnE_abstract}$(e)$.
\end{proof}

\begin{proposition}\label{prop:HediscA1E}
Assume \Rfour.
Let $(K,v),(L,w)\models\bHed$.
If ${\rm Th}_{\forall_{1}\exists}(Kv)\subseteq{\rm Th}_{\forall_{1}\exists}(Lw)$,
then
${\rm Th}_{\forall_{1}\exists}(K,v)\subseteq{\rm Th}_{\forall_{1}\exists}(L,w)$.
\end{proposition}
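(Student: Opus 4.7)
The plan is to establish the chain of inclusions
\[
 {\rm Th}_{\forall_1\exists}(K,v) \;\subseteq\; {\rm Th}_{\forall_1\exists}(Kv(t)^h,v_t) \;\subseteq\; {\rm Th}_{\forall_1\exists}(Lw(t)^h,v_t) \;\subseteq\; {\rm Th}_{\forall_1\exists}(L,w),
\]
in which the middle inclusion is Lemma \ref{lem:A1E_kth} applied with $k=Kv$ and $l=Lw$, so only the two outer inclusions require work.

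For the first inclusion I would mirror the opening of the proof of Proposition \ref{prop:AE_Hepi}: since $(K,v)\models\bHed$ there is a uniformizer $\pi_K\in K$, and by \cite[Proposition~4.5]{ADF23} we may replace $(K,v,\pi_K)\models\bHepi$ by a suitable elementary extension equipped with a section of the residue map; extending this section by $t\mapsto\pi_K$ gives an $\mathfrak{L}_{\rm val}(\varpi)$-embedding $(Kv(t)^h,v_t,t)\to(K,v,\pi_K)$ that is existentially closed by \cite[Corollary~4.14]{ADF23} under \Rfour. Lemma \ref{lem:AE_abstract}, together with Lemma \ref{lem:AE_kth} to absorb the possible enlargement of the residue field, then yields ${\rm Th}_{\forall_1\exists}(K,v)\subseteq{\rm Th}_{\forall_1\exists}(Kv(t)^h,v_t)$.

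The third inclusion I would prove by verifying Lemma \ref{lem:AnE_abstract}(f) with $M=(Lw(t)^h,v_t)$, $N=(L,w)$ and $n=1$, taking $M_b:=(Lw(t)^h,v_t)$ (trivially $\equiv_{\forall_1\exists}M$) and $N_b:=(L^*,w^*)$ for a single elementary extension $(L^*,w^*,\pi_L^*)\succcurlyeq(L,w,\pi_L)$ equipped with a section $\sigma\colon L^*w^*\to L^*$ of the residue map (available by \cite[Proposition~4.5]{ADF23}). For each $b\in L$ the task reduces to constructing an $\mathfrak{L}_{\rm val}$-embedding $f_b\colon(Lw(t)^h,v_t)\to(L^*,w^*)$ with $b\in f_b(Lw(t)^h)$: when $w(b)\geq 0$ I would set $\pi:=b-\sigma(\bar b)\in L^*$ and, if $\pi\neq 0$, note that $w^*(\pi)>0$ and that $\pi$ is transcendental over $\sigma(Lw)$, so extending $\sigma|_{Lw}$ by $t\mapsto\pi$ defines a field embedding that lifts uniquely to the henselization and satisfies $f_b(\bar b+t)=b$; the remaining cases $\pi=0$ (where $b\in\sigma(Lw)$ is already in the image of $\sigma|_{Lw}$) and $w(b)<0$ (reduce via $c=1/b$) are straightforward variants.

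The main obstacle I foresee is this third step: in contrast to Proposition \ref{prop:AE_Hepi} we no longer have $wL\cong\mathbb{Z}$ or excellence of $\mathcal{O}_w$ available to force $f_b$ to be existentially closed, so a full $\forall\exists$-analogue is out of reach and we must genuinely exploit the restriction to a single universal quantifier via Lemma \ref{lem:AnE_abstract}(f). The key technical subtlety is that the image of $t$ under $f_b$ will generally \emph{not} be a uniformizer of $(L^*,w^*)$, yet $f_b$ is still an $\mathfrak{L}_{\rm val}$-embedding because the induced map of value groups $\mathbb{Z}\to w^*L^*$, $1\mapsto w^*(\pi)$, is order-preserving as soon as $w^*(\pi)>0$.
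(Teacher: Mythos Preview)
Your proposal is correct and follows essentially the same three-step chain as the paper's proof: reduce $(K,v)$ to $(Kv(t)^h,v_t)$ via an existentially closed section embedding (the paper packages this as a direct application of Proposition~\ref{prop:AE_Hepi} with target $(k(t)^h,v_t,t)$), invoke Lemma~\ref{lem:A1E_kth} for the middle step, and then verify the criterion of Lemma~\ref{lem:AnE_abstract} for the last step by the residue-plus-uniformizer decomposition $b=\sigma(\bar b)+\pi$. The only cosmetic differences are that you use part~(f) of Lemma~\ref{lem:AnE_abstract} with a fixed elementary extension $N_b=(L^*,w^*)$ while the paper first replaces $(L,w)$ and then uses part~(e) with $N^*=N$, and you are more explicit about invoking Lemma~\ref{lem:AE_kth} to neutralise the enlargement of the residue field; both variants are equivalent here.
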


\begin{proof}
Let $(K,v),(L,w)\models\bHed$ and suppose that
$\mathrm{Th}_{\forall_{1}\exists}(k)\subseteq\mathrm{Th}_{\forall_{1}\exists}(l)$,
where $k=Kv$ and $l=Lw$.
Let $\pi_{K}$ be any uniformizer of $v$,
so that $(K,v,\pi_{K})\models\bHepi$.
As above (for example, apply Proposition \ref{prop:AE_Hepi}),
$\mathrm{Th}_{\forall\exists}(K,v,\pi_{K})\subseteq\mathrm{Th}_{\forall\exists}(k(t)^{h},v_{t},t)$,
in particular
$\mathrm{Th}_{\forall_{1}\exists}(K,v)\subseteq\mathrm{Th}_{\forall_{1}\exists}(k(t)^{h},v_{t})$.
By Lemma~\ref{lem:A1E_kth},
$\mathrm{Th}_{\forall_{1}\exists}(k(t)^{h},v_{t})\subseteq\mathrm{Th}_{\forall_{1}\exists}(l(t)^{h},v_{t})$.

By \cite[Proposition 4.5]{ADF23}, replacing $(L,w)$ by an elementary extension,
there exists a section $f\colon l\rightarrow L$ of the residue map of $w$.
For every $s\in L^\times$ with $w(s)>0$,
$f$ extends uniquely to an embedding $f_s\colon(l(t)^h,v_t,t)\rightarrow (L,w,s)$,
as in the proof of Lemma \ref{lem:A1E_kth}.
By Lemma \ref{lem:AnE_abstract}$(e)$ (applied with $n=1$),
in order to show that
$\mathrm{Th}_{\forall_{1}\exists}(l(t)^{h},v_{t})\subseteq \mathrm{Th}_{\forall_{1}\exists}(L,w)$,
it suffices to find for every $c\in L$ an $\mathfrak{L}_{\mathrm{val}}$-embedding
$g\colon(l(t)^{h},v_{t})\rightarrow(L,w)$
with $c\in g(l(t)^{h})$.
Replacing $c$ by $c^{-1}$ if necessary,
we may assume that $w(c)\geq0$,
and then we may write $c=f(b)+u$ with $b\in l$ and $w(u)>0$.
If $u=0$,
then $c=f(b)$,
and we 
let $g=f_s$ where $s\in L^\times$ with $w(s)>0$ is arbitrary,
and then $c=g(b)$.
Otherwise we 
let $g=f_u$,
and then $c=g(b+t)$
\end{proof}

\begin{corollary}\label{cor:HediscA1E}
If \Rfour\ holds, there exists a computable map
    $\tau\colon{\rm Sent}_{{\forall_1\exists}}(\mathfrak{L}_{\rm val})\rightarrow{\rm Sent}_{\forall_1\exists}(\mathfrak{L}_{\rm ring})$
    such that for every $(K,v)\models\bHed$ we have
    ${\rm Th}_{{\forall_1\exists}}(K,v)=\tau^{-1}({\rm Th}_{\forall_1\exists}(Kv))$.
\end{corollary}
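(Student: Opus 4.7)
The plan is to deduce Corollary \ref{cor:HediscA1E} from Proposition \ref{prop:HediscA1E} in exactly the same way that the three parts of Corollary \ref{cor:reductions} are deduced from Proposition \ref{prof:AE_char0} and Corollary \ref{lem:AE_4}, namely via the general passage from a semantic reduction to a computable syntactic reduction codified in \cite[Corollary 2.23]{AF23}. For $\psi\in\mathrm{Sent}(\mathfrak{L}_{\mathrm{ring}})$ I write $\psi^{\mathbf{k}}$ for the $\mathfrak{L}_{\mathrm{val}}$-sentence obtained by relativizing all quantifiers of $\psi$ to the residue field sort $\mathbf{k}$; this operation is uniformly computable and sends $\mathrm{Sent}_{\forall_{1}\exists}(\mathfrak{L}_{\mathrm{ring}})$ into $\mathrm{Sent}_{\forall_{1}\exists}(\mathfrak{L}_{\mathrm{val}})$. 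The semantic input is Proposition \ref{prop:HediscA1E}, which says that for each $\varphi\in\mathrm{Sent}_{\forall_{1}\exists}(\mathfrak{L}_{\mathrm{val}})$ the truth of $\varphi$ in any model $(K,v)\models\bHed$ is determined by $\mathrm{Th}_{\forall_{1}\exists}(Kv)$; equivalently, any $(L,w)\models\bHed$ satisfying $\{\psi^{\mathbf{k}}:\psi\in\mathrm{Th}_{\forall_{1}\exists}(Kv)\}$ also satisfies $\varphi$.

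First I would show by compactness that for every $\varphi\in\mathrm{Sent}_{\forall_{1}\exists}(\mathfrak{L}_{\mathrm{val}})$ there exists some $\psi\in\mathrm{Sent}_{\forall_{1}\exists}(\mathfrak{L}_{\mathrm{ring}})$ with $\bHed\vdash\varphi\leftrightarrow\psi^{\mathbf{k}}$. Let $P:=\{\psi\in\mathrm{Sent}_{\forall_{1}\exists}(\mathfrak{L}_{\mathrm{ring}}):\bHed\cup\{\psi^{\mathbf{k}}\}\vdash\varphi\}$. For any $(K,v)\models\bHed\cup\{\varphi\}$, the semantic input gives $\bHed\cup\{\psi^{\mathbf{k}}:\psi\in\mathrm{Th}_{\forall_{1}\exists}(Kv)\}\vdash\varphi$, so by compactness some finite conjunction $\psi_{K}$ of elements of $\mathrm{Th}_{\forall_{1}\exists}(Kv)$ lies in $P$, and evidently $Kv\models\psi_{K}$. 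Consequently $\bHed\cup\{\varphi\}\cup\{\neg\psi^{\mathbf{k}}:\psi\in P\}$ is inconsistent, and a second application of compactness produces $\psi_{1},\ldots,\psi_{n}\in P$ with $\bHed\cup\{\varphi\}\vdash\bigvee_{i}\psi_{i}^{\mathbf{k}}$. Since the fragment $\forall_{1}\exists$ is closed under finite disjunction, $\psi:=\bigvee_{i}\psi_{i}$ lies in $\mathrm{Sent}_{\forall_{1}\exists}(\mathfrak{L}_{\mathrm{ring}})$ and satisfies $\bHed\vdash\varphi\to\psi^{\mathbf{k}}$ and, since each $\psi_{i}\in P$, also $\bHed\vdash\psi^{\mathbf{k}}\to\varphi$, hence $\bHed\vdash\varphi\leftrightarrow\psi^{\mathbf{k}}$.

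Finally I would convert existence into a computable $\tau$ in the standard way: because $\bHed$ is recursively axiomatizable, the set of pairs $(\varphi,\psi)\in\mathrm{Sent}_{\forall_{1}\exists}(\mathfrak{L}_{\mathrm{val}})\times\mathrm{Sent}_{\forall_{1}\exists}(\mathfrak{L}_{\mathrm{ring}})$ such that $\bHed\vdash\varphi\leftrightarrow\psi^{\mathbf{k}}$ is computably enumerable, and the preceding step guarantees that every $\varphi$ has at least one such witness $\psi$; on input $\varphi$, $\tau$ searches this enumeration and outputs the first $\psi$ it finds. The identity $\mathrm{Th}_{\forall_{1}\exists}(K,v)=\tau^{-1}(\mathrm{Th}_{\forall_{1}\exists}(Kv))$ for $(K,v)\models\bHed$ is then immediate from the equivalence. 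The substantive content is entirely packaged into Proposition \ref{prop:HediscA1E} (and in particular its appeal to \Rfour\ via the excellence-based existential closedness input from \cite[Corollary 4.14]{ADF23}); the step carried out here is a routine compactness-and-enumeration passage, so I do not foresee any further substantive obstacle.
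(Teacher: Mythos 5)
Your proposal is correct and takes essentially the same route as the paper: the paper's proof is a one-liner deducing the corollary from Proposition \ref{prop:HediscA1E} via the general semantic-to-syntactic transfer principle of \cite[Corollary 2.23]{AF23}, and your compactness-and-enumeration argument is a faithful reconstruction of that cited black box, with the relevant details (closure of the fragment $\forall_1\exists$ under finite conjunctions and disjunctions, recursive axiomatizability of $\bHed$, and the fact that relativizing quantifiers to the residue sort preserves the $\forall_1\exists$ shape) all handled correctly. No gaps.
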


\begin{proof}
The map $\tau$ can again be obtained
from Proposition \ref{prop:HediscA1E}
by \cite[Corollary 2.23]{AF23}.
\end{proof}

\begin{remark}
We remark that the statement of Proposition \ref{prop:HediscA1E}
becomes false if we replace $\bHed$ 
\begin{enumerate}[$(a)$]
\item by
$\bHen$,
as divisibility of the value group of a model of $\bHen$ is axiomatizable by a theory $T_{\rm div}\subseteq{\rm Sent}_{\forall_1\exists}(\mathfrak{L}_{\mathrm{val}})$
(and so for example
${\rm Th}_{\forall_1\exists}(k(\!(t)\!),v_t)\neq{\rm Th}_{\forall_1\exists}(k(\!(t^\mathbb{Q})\!),v_t)$), 
\item 
by $\bHepi$ ,
as the theory of $\mathbb{Z}$-groups
for the value group of a model of $\bHepi$
is axiomatizable by a theory
$T_{\mathbb{Z}}\subseteq{\rm Sent}_{\forall_{1}\exists}(\mathfrak{L}_{\mathrm{val}}(\varpi))$
(and so for example
${\rm Th}_{\forall_1\exists}(k(\!(t)\!),v_t,t)\neq{\rm Th}_{\forall_1\exists}(k(\!(t^{\mathbb{Z}\oplus\mathbb{Z}})\!),v_t,t)$),
or 
\item 
by
$\bHepiZ$,
as $\forall z\pi_{p,2}(t,z)$ (cf.~Lemma \ref{lem:pindep}) expresses that $t$ is a $p$-basis (and so for example
${\rm Th}_{\forall_1\exists}(k(\!(t)\!),v_t,t)\neq{\rm Th}_{\forall_1\exists}(k(t,s)^h,v_t,t)$,
where $s\in k(\!(t)\!)\setminus k(t)^h$).
This also shows that the proposition becomes false even for $\bHed$ if we replace $\forall_1\exists$ by $\forall_2\exists$
(take $\forall z_1,z_2\pi_{p,2}(z_1,z_2)$ instead of $\forall z\pi_{p,2}(t,z)$).
A counterexample of Kuhlmann shows 
that the proposition becomes false even if we replace
$\bHed$ by $\bHepiZ$ together with axioms for defectlessness, see~\cite[Theorem 1.3]{Kuh01}. 
\end{enumerate}
Note that in all of these examples, actually $Kv=Lw$ rather than just ${\rm Th}_{\forall_1\exists}(Kv)\subseteq{\rm Th}_{\forall_1\exists}(Lw)$.
\end{remark}

\begin{remark}
The work \cite{DF24} studies the common existential theory of all or almost all completions of a global function field.
Proposition \ref{prop:HediscA1E} allows to
extend some of the results there from $\exists$ to $\forall_1\exists$,
assuming \Rfour.
More precisely, Proposition 3.2, Theorem 3.4 and Corollary 3.5 of \cite{DF24} remain valid if one replaces ``universal/existential''
by ``$\forall_1\exists/\exists_1\forall$''
(denoting the smallest fragment containing $\forall_1\exists$ closed under negation)
and adding the assumption \Rfour.
In particular,
for a given global function field $K$ with family of completions $(\hat{K}_v,\hat{v})_{v\in\mathbb{P}_K}$,
both $\bigcap_{v\in\mathbb{P}_K}{\rm Th}_{\forall_1\exists}(\hat{K}_v,\hat{v})$ and
$\bigcup_{P\subseteq\mathbb{P}_K{\rm finite}}\bigcap_{v\in\mathbb{P}_K\setminus P}{\rm Th}_{\forall_1\exists}(\hat{K}_v,\hat{v})$ are decidable
assuming \Rfour.
\end{remark}

The following two propositions are an elaboration of \cite[Remark 7.9]{AF16}.

\begin{lemma}\label{lem:AutLaurent}
Let $k$ be a field and $K=k(\!(t)\!)$. Then
${\rm Aut}(K/k)\subseteq{\rm Aut}(K,v_t)$,
$\{\sigma(t):\sigma\in{\rm Aut}(K/k)\}=\mathfrak{m}_{v_t}\setminus\mathfrak{m}_{v_t}^2$, the set of uniformizers of $v_t$.
Similarly,
${\rm End}(K/k)\subseteq{\rm End}(K,v_t)$
and
$\{\sigma(t):\sigma\in{\rm End}(K/k)\}=\mathfrak{m}_{v_t}\setminus\{0\}$.
\end{lemma}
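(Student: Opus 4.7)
The plan is to first prove the statements for $\mathrm{End}(K/k)$ and then deduce the $\mathrm{Aut}(K/k)$ case. For the easier inclusion $\{\sigma(t):\sigma\in\mathrm{End}(K/k)\}\supseteq\mathfrak{m}_{v_t}\setminus\{0\}$: given $s\in\mathfrak{m}_{v_t}\setminus\{0\}$, since $v_t(s)\geq 1$ forces $v_t(s^n)\to\infty$, the series $\sum a_ns^n$ converges in the $v_t$-complete field $K$ for every $(a_n)\subseteq k$. The rule $\sum a_nt^n\mapsto\sum a_ns^n$ thus defines a continuous $k$-algebra map $k[[t]]\to K$; it is injective since $s\neq 0$ is transcendental over $k$, and extends to a $k$-algebra embedding $\sigma\colon K\to K$ with $\sigma(t)=s$.

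For the reverse inclusion together with $\mathrm{End}(K/k)\subseteq\mathrm{End}(K,v_t)$: let $\sigma\in\mathrm{End}(K/k)$ and $s=\sigma(t)$, and define the pullback valuation $w(u):=v_t(\sigma(u))$ on $K$. Then $w$ is trivial on $k$ with value group in $\mathbb{Z}$; it is nontrivial, since otherwise $\sigma(K^*)\subseteq\mathcal{O}_{v_t}^*$ and composition with the residue map would produce an injection $K\hookrightarrow k$, contradicting the transcendence of $K/k$. Since $\sigma|_k=\mathrm{id}_k$, one verifies that the residue field of $w$ is $k$. The crucial step is to show $\mathcal{O}_w=\mathcal{O}_{v_t}$, which I would establish from the rigidity of $v_t$-complete fields: any nontrivial discrete valuation on $K$ trivial on $k$ with residue field $k$ must be equivalent to $v_t$, a consequence of F.K.~Schmidt's theorem applied to the pair $(v_t, w)$ together with the existence of proper algebraic extensions of $K$ (for example $K(\sqrt{t})$ in characteristic $\neq 2$, and Artin--Schreier-type extensions otherwise). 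Granting $\mathcal{O}_w=\mathcal{O}_{v_t}$, we get $\sigma(\mathcal{O}_{v_t})\subseteq\mathcal{O}_{v_t}$ and in particular $v_t(s)=w(t)\geq 1$; writing any $u\in K^*$ as $u=at^{v_t(u)}$ with $a\in\mathcal{O}_{v_t}^*$, the fact that $\sigma$ preserves $\mathcal{O}_{v_t}^*$ then yields the scaling relation $v_t(\sigma(u))=v_t(s)\cdot v_t(u)$ for all $u\in K^*$.

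The $\mathrm{Aut}(K/k)$ case follows quickly. Both $\sigma$ and $\sigma^{-1}$ lie in $\mathrm{End}(K/k)\subseteq\mathrm{End}(K,v_t)$, so applying the scaling relation to $\sigma^{-1}(t)$ gives $1=v_t(t)=v_t(s)\cdot v_t(\sigma^{-1}(t))$; as both factors are positive integers, $v_t(s)=1$, so $\sigma(t)$ is a uniformizer. Conversely, every uniformizer $s\in\mathfrak{m}_{v_t}\setminus\mathfrak{m}_{v_t}^2$ yields an endomorphism by the above substitution construction, and this is an automorphism because Hensel's lemma solves $s=s(T)$ for a unique $T\in\mathfrak{m}_{v_t}\setminus\mathfrak{m}_{v_t}^2$, providing the inverse. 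The main obstacle will be establishing $\mathcal{O}_w=\mathcal{O}_{v_t}$ rigorously; this is where the $v_t$-completeness of $K$ together with the existence of proper algebraic extensions is essential.
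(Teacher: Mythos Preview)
Your argument has a genuine gap at the invocation of F.~K.~Schmidt's theorem. That theorem (as in \cite[Theorem~4.4.1]{EP}) requires \emph{both} valuations to be henselian, but your pullback $w(u):=v_t(\sigma(u))$ is not known to be henselian: via $\sigma$, the valued field $(K,w)$ is isomorphic to $(\sigma(K),\,v_t|_{\sigma(K)})$, and an arbitrary subfield of a henselian valued field need not be henselian (think of $k(t)\subseteq k(\!(t)\!)$). Consequently your stronger rigidity claim, that ``any nontrivial discrete valuation on $K$ trivial on $k$ with residue field $k$ must be equivalent to $v_t$'', is unsupported as stated; the residue-field and value-group computations you give are correct, but they do not supply henselianity of $w$.

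The paper avoids this by putting the auxiliary valuation on the \emph{image} rather than the source: it sets $w:=v_t\circ\sigma^{-1}$ on $L:=\sigma(K)$, so that $(L,w)\cong(K,v_t)$ is complete and hence henselian for free. One then passes to the relative algebraic closure $F:=\overline{L}\cap K$: the unique extension of $w$ to $F$ is henselian (algebraic extension of a henselian field), and $v_t|_F$ is henselian because $F$ is relatively algebraically closed in the henselian field $(K,v_t)$. Now F.~K.~Schmidt applies on $F$, forcing the two valuation rings to agree there; in particular $\mathfrak{m}_w=\mathfrak{m}_{v_t}\cap L$, and since $w(\sigma(t))=v_t(t)>0$ by construction one gets $\sigma(t)\in\mathfrak{m}_{v_t}$. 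Your substitution construction for the inclusion $\{\sigma(t)\}\supseteq\mathfrak{m}_{v_t}\setminus\{0\}$ is the same as the paper's, and your deduction of the $\mathrm{Aut}$ statements from the $\mathrm{End}$ statements via the scaling relation and a compositional inverse is a clean alternative to the paper's direct citation of Schilling --- it works once the $\mathrm{End}$ case is fixed as above.
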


\begin{proof}
The claim that each $\sigma\in{\rm Aut}(K/k)$ preserves $v_t$
is contained in \cite{Schilling},
and also follows from \cite{Ax}.
By \cite[Theorem 1]{Schilling},
$\sigma\mapsto\sigma(t)/t$ gives a bijection ${\rm Aut}(K/k)\rightarrow\mathcal{O}_v^\times$, which implies that
$\{\sigma(t):\sigma\in{\rm Aut}(K/k)\}$ is the set of uniformizers.
For each $0\neq s\in\mathfrak{m}_t$, $\sum_ia_it^i\mapsto\sum_ia_is^i$ defines a $\sigma\in{\rm End}(K/k)$ with $\sigma(t)=s$.
Conversely, if $\sigma\in{\rm End}(K/k)$, then
$L:=\sigma(K)$ carries the henselian valuation $w:=v_t\circ\sigma^{-1}$,
whose extension to $F:=\overline{L}\cap K$ must be equivalent to
$v_t|_F$ by F.~K.~Schmidt's theorem \cite[Theorem 4.4.1]{EP},
so since $v_t(t)>0$ also $w(t)>0$ and therefore $v_t(\sigma(t))>0$.
This also shows that $\sigma\in{\rm End}(K,v_t)$.
\end{proof}

\begin{lemma}\label{lem:uniformizer}
There is $\nu(x)\in\Form_{\forall_1}(\mathfrak{L}_{\rm val})$ which in every valued field $(K,v)$ defines the set of uniformizers.
\end{lemma}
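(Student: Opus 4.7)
The plan is to write down an explicit $\forall_1$-formula that literally says ``$v(x)$ is positive and minimal positive''. In the three-sorted language $\mathfrak{L}_{\rm val}$ the atomic formulas $v(x) > 0$, $v(y) \leq 0$ and $v(x) \leq v(y)$ are all quantifier-free, so the candidate
\[
 \nu(x) \;:=\; \forall y\Big(v(x) > 0 \;\wedge\; \big(v(y) \leq 0 \;\vee\; v(x) \leq v(y)\big)\Big)
\]
lies in $\Form_{\forall_1}(\mathfrak{L}_{\rm val})$ (a single universal quantifier over a field-sort variable, applied to a quantifier-free matrix).

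The verification splits into two routine directions. First, suppose $a \in K$ is a uniformizer of $v$, i.e.\ $v(a) > 0$ and $v(a)$ is the smallest positive element of $vK$. For every $b \in K$, either $v(b) \leq 0$, or $v(b) > 0$ in which case the minimality of $v(a)$ gives $v(a) \leq v(b)$ (with the convention $v(0) = \infty$ handled automatically). Hence $(K,v) \models \nu(a)$. Conversely, if $(K,v) \models \nu(a)$, then $v(a) > 0$ and no element of $K$ has value strictly between $0$ and $v(a)$, so $v(a)$ is a smallest positive value in $vK$, i.e.\ $a$ is a uniformizer.

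There is no real obstacle here; the only thing to notice is the edge cases. If $v$ is trivial or has divisible (or more generally dense) value group, there are no uniformizers, and $\nu$ correctly defines the empty set: in the trivial case no $a$ satisfies $v(x) > 0$, and in the dense case any candidate $a$ with $v(a) > 0$ is witnessed false by some $b$ with $0 < v(b) < v(a)$. Thus $\nu$ defines the set of uniformizers in every valued field $(K,v)$.
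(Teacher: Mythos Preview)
Your proof is correct and essentially identical to the paper's: the paper writes $\nu(x)=\forall z\big(v(x)>0 \wedge (v(z)\geq v(x)\vee v(z)\leq 0)\big)$, which is your formula up to renaming $y$ to $z$ and reordering. Your added discussion of the edge cases (trivial or dense value group) is correct extra commentary.
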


\begin{proof}
Let $\nu(x)$ be $\forall z( v(x)>0 \wedge (v(z)\geq v(x)\vee v(z)\leq 0))$.
\end{proof}

\begin{proposition}\label{lem:AnE}
Let $\mathsf{F}\in\{\mathsf{F}_0,\exists,\forall,\exists\forall,\forall\exists,\dots\}$.
There is a computable map 
$$
 \tau\colon{\rm Form}_{\forall_n[\exists_1[\mathsf{F}]]}(\mathfrak{L}_{\rm val}(t))\rightarrow{\rm Form}_{\forall_{n+1}[\exists_1[\mathsf{F}]]}(\mathfrak{L}_{\rm val})
$$
such that for every field $k$ and every
$\varphi\in{\rm Form}_{\forall_n[\exists_1[\mathsf{F}]]}(\mathfrak{L}_{\rm val}(t))$ we have
${\rm Var}(\varphi)={\rm Var}(\tau\varphi)$ and
$$
 \varphi((k(\!(t)\!),v_t,t))\cap k^m = \tau\varphi((k(\!(t)\!),v_t))\cap k^m.
$$
In particular,
${\rm Th}_{\forall_n\exists}(k(\!(t)\!),v_t,t)=\tau^{-1}({\rm Th}_{\forall_{n+1}\exists}(k(\!(t)\!),v_t))$.
\end{proposition}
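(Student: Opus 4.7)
The plan is to eliminate the constant $t$ by universally quantifying over uniformizers, using Lemma~\ref{lem:AutLaurent} to transport the statement by an automorphism of $(k(\!(t)\!),v_t)$ that moves $t$ to an arbitrary uniformizer. First I write $\varphi\in\mathrm{Form}_{\forall_{n}[\exists_{1}[\mathsf{F}]]}(\mathfrak{L}_{\rm val}(t))$ as $\forall x_{1}\cdots\forall x_{k}\psi$ with $k\le n$ and $\psi\in\exists_{1}[\mathsf{F}]$; then either $\psi\in\mathsf{F}$ or $\psi=\exists y'\chi$ with $\chi\in\mathsf{F}$. Replace $t$ throughout by a fresh variable $x_{0}$.

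Second, define $\tau\varphi$ to be
\[
\forall x_{0}\,\forall x_{1}\cdots\forall x_{k}\,\exists y\,\Bigl(\,v(x_{0})\le 0\ \lor\ \bigl(v(y)>0\,\wedge\,v(y)<v(x_{0})\bigr)\ \lor\ \psi^{\dagger}(\underline{u},x_{0},x_{1},\dots,x_{k},y)\Bigr),
\]
where $\psi^{\dagger}$ is $\chi$ (renaming $y'$ to $y$) in the first case and $\psi[t\mapsto x_{0}]$ in the second (with $y$ fresh). Since the inner disjunction is built from quantifier-free formulas in $\mathsf{F}_{0}\subseteq\mathsf{F}$ and one $\mathsf{F}$-formula, and since $\mathsf{F}$ is a fragment, the body of $\exists y$ lies in $\mathsf{F}$, so $\tau\varphi\in\Form_{\forall_{n+1}[\exists_{1}[\mathsf{F}]]}(\mathfrak{L}_{\rm val})$. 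The construction is plainly computable and uniform in $\varphi$, with the same free variables.

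Third, I verify equivalence on $k^{m}$. For the forward direction, fix $\underline{a}\in k^{m}$ with $(k(\!(t)\!),v_{t},t)\models\varphi(\underline{a})$, and take arbitrary $x_{0},\underline{x}\in k(\!(t)\!)$. If $v(x_{0})\le 0$, any $y$ makes the first disjunct true; if $v(x_{0})>0$ but $x_{0}$ is not a uniformizer, pick any $z$ with $0<v(z)<v(x_{0})$ and let $y=z$; if $x_{0}$ is a uniformizer, Lemma~\ref{lem:AutLaurent} supplies $\sigma\in\mathrm{Aut}(k(\!(t)\!)/k)\subseteq\mathrm{Aut}(k(\!(t)\!),v_{t})$ with $\sigma(t)=x_{0}$, and applying $\sigma$ to a witness of $\psi(\underline{a},t,\sigma^{-1}(\underline{x}))$ (or to the assertion itself, if $\psi$ has no leading existential) yields a $y$ satisfying the third disjunct, using that $\sigma$ fixes $\underline{a}\in k^{m}$. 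For the converse, specialise $x_{0}=t$ in $\tau\varphi(\underline{a})$: since $v_{t}(t)=1$ is the least positive element of $\mathbb{Z}$, the first two disjuncts are false, so $\exists y\,\psi^{\dagger}(\underline{a},t,\underline{x},y)$ holds for all $\underline{x}$; in either case for $\psi$, this unpacks to $\psi(\underline{a},t,\underline{x})$ for all $\underline{x}$, giving $\varphi(\underline{a})$. The "in particular" statement follows by taking $\mathsf{F}=\exists$ (noting $\exists_{1}[\exists]=\exists$) and noting that $\tau$ sends sentences to sentences.

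The main (and essentially only) obstacle is quantifier-counting. The naive move $\forall x_{0}(\nu(x_{0})\to\varphi')$ uses the formula of Lemma~\ref{lem:uniformizer}, whose negation contributes an extra existential witness; when $\psi$ already contains an existential quantifier, a straightforward prenexation would land $\tau\varphi$ in $\exists_{2}[\mathsf{F}]$ rather than $\exists_{1}[\mathsf{F}]$. The trick in the construction above is to use the single variable $y$ to play a dual role — either it witnesses that $x_{0}$ is not a uniformizer (second disjunct) or it witnesses $\psi$ (third disjunct) — thereby collapsing the two existentials into one and keeping $\tau\varphi$ in $\forall_{n+1}[\exists_{1}[\mathsf{F}]]$ uniformly in $\mathsf{F}\in\{\mathsf{F}_{0},\exists,\forall,\exists\forall,\forall\exists,\dots\}$.
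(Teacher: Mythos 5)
Your proof is correct and follows essentially the same route as the paper: your disjunction $v(x_0)\le 0 \vee (v(y)>0\wedge v(y)<v(x_0))$ is exactly the negation of the quantifier-free matrix $\eta(x,z)$ of the uniformizer formula from Lemma~\ref{lem:uniformizer}, and the paper's $\tau\varphi$ is likewise $\forall x\forall\underline{y}\exists z(\neg\eta(x,z)\vee\psi(\underline{u},x,\underline{y},z))$, with the same dual use of the single existential witness and the same appeal to Lemma~\ref{lem:AutLaurent} in one direction and to the specialisation $x_0=t$ in the other. The only (cosmetic) difference is that you spell out the case distinction on whether $\psi$ has a leading existential, which the paper leaves implicit.
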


\begin{proof}
We write $K$ for the $\mathfrak{L}_{\rm val}$-structure $(k(\!(t)\!),v_t)$.
Write the formula $\nu$ from Lemma \ref{lem:uniformizer}  as
$\nu(x)=\forall z\eta(x,z)$ with $\eta\in{\rm Form}(\mathfrak{L}_{\rm val})$ quantifier-free.
Given $\varphi(\underline{u})\in{\rm Form}_{\forall_n[\exists_1[\mathsf{F}]]}(\mathfrak{L}_{\rm val}(t))$,
write $\varphi=\forall y_1,\dots,y_n\exists z\psi(\underline{u},t,\underline{y},z)$
with $\psi(\underline{u},w,\underline{y},z)\in{\rm Form}_{\mathsf{F}}(\mathfrak{L}_{\rm val})$,
and define 
$$
 \tau\varphi \;:=\; \forall x\forall y_1,\dots,y_n \exists z (\neg\eta(x,z)\vee\psi(\underline{u},x,\underline{y},z)).
$$
Then $\tau\varphi\in{\rm Form}_{\forall_{n+1}[\exists_1[\mathsf{F}]]}(\mathfrak{L}_{\rm val})$,
as $\neg\eta\in{\rm Form}_{\mathsf{F}_0}(\mathfrak{L}_{\rm val})\subseteq{\rm Form}_{\mathsf{F}}(\mathfrak{L}_{\rm val}) $ and
$\psi\in{\rm Form}_{\mathsf{F}}(\mathfrak{L}_{\rm val})$.
If $\underline{a}\in \tau\varphi(K)\cap k^m$,
then in particular $K\models\forall\underline{y}\exists z(\neg\eta(t,z)\vee\psi(\underline{a},t,\underline{y},z))$, 
and since $t$ is a uniformizer and therefore $K\models\forall z\eta(t,z)$
we get that $K\models\forall\underline{y}\exists z\psi(\underline{a},t,\underline{y},z)$,
hence $\underline{a}\in\varphi(K)$.
Conversely, if $\underline{a}\in\varphi(K)$,
then $K\models\forall\underline{y}\exists z\psi(\underline{a},t,\underline{y},z)$.
Given any $x\in k(\!(t)\!)$, either $x$ is a uniformizer,
in which case there is $\sigma\in{\rm Aut}(K)$ with $\sigma(t)=x$
and $\sigma|_k={\rm id}_k$ (Lemma~\ref{lem:AutLaurent})
and therefore
$K\models\forall\underline{y}\exists z\psi(\underline{a},x,\underline{y},z)$,
or $x$ is not a uniformizer, in which case there exists $z\in k(\!(t)\!)$ with $K\models\neg\eta(x,z)$.
Thus $K\models\forall x\forall\underline{y}\exists z(\neg\eta(x,z)\vee\psi(\underline{a},x,\underline{y},z))$, i.e.~$\underline{a}\in\tau\varphi(K)$.
\end{proof} 

\begin{proposition}\label{prop:A1Etot}
Let $\mathfrak{L}$ be a sublanguage of $\mathfrak{L}_{\rm val}$
containing the full $\mathfrak{L}_{\rm ring}$ on the field sort,
and let $q$ be a prime power.
There is a map  
$$
 \tau_\mathfrak{L}\colon{\rm Sent}_{\forall_1\exists}(\mathfrak{L})\rightarrow{\rm Sent}_{\exists}(\mathfrak{L}(t))
$$
such that for every finite field $k$ with $|k|=q$
and $\varphi\in{\rm Sent}_{\forall_1\exists}(\mathfrak{L})$
we have
$$
 K\models\varphi\quad\Leftrightarrow\quad
 K\models\tau_\mathfrak{L}\varphi,
$$ 
where $K$ is 
the $\mathfrak{L}$- respectively $\mathfrak{L}(t)$-reduct of the
$\mathfrak{L}_{\rm val}(t)$-structure $(k(\!(t)\!),v_t,t)$.
In particular,
${\rm Th}_{\forall_1\exists}(k(\!(t)\!),v_t)=\tau^{-1}_{\mathfrak{L}_{\rm val}}({\rm Th}_{\exists}(k(\!(t)\!),v_t,t))$.
The map $\tau$ is uniform in $\mathfrak{L}$
and computable uniformly in $\mathfrak{L}$ and $q$.
\end{proposition}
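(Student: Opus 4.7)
The plan is to extend the argument of Proposition~\ref{prop:rat_ff_Treda} to the henselian valued field setting, using Lemma~\ref{lem:AutLaurent} in place of Lemma~\ref{lem:EndAut}.  By Remark~\ref{rem:prnx} with the fragment $\exists$, I may assume $\varphi = \forall x\,\exists\underline{y}\,\psi(x,\underline{y})$ with $\psi$ quantifier-free.  If the outer quantifier is on the residue field sort $\mathbf{k}$ or the value group sort $\mathbf{\Gamma}$, I first reduce to the field sort by rewriting $\forall x^{\mathbf{k}} \exists \underline{y}\,\psi(x, \underline{y})$ as $\forall b^{\mathbf{VF}} \exists \underline{y}\,(v(b)<0 \vee \psi(\mathrm{res}(b), \underline{y}))$ and $\forall \alpha^{\mathbf{\Gamma}} \exists \underline{y}\,\psi(\alpha, \underline{y})$ as $\forall b^{\mathbf{VF}} \exists \underline{y}\,(b = 0 \vee \psi(v(b), \underline{y}))$; equivalence uses that every residue is a reduction of some integral element and every value group element is realized in the image of $v$ on $K^{\times}$.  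Henceforth $x$ ranges over $\mathbf{VF}$.

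I then define $\tau_\mathfrak{L}\varphi$ to be the existential $\mathfrak{L}(t)$-sentence
\[
\exists z_1,\dots,z_q,\, w,\, \underline{y}_1,\dots,\underline{y}_q,\, \underline{y}'_1,\dots,\underline{y}'_q,\, \underline{y}''\; \bigg(\bigwedge_{i} z_i^q = z_i \wedge \bigwedge_{i\neq j} z_i \neq z_j \wedge wt = 1 \wedge \bigwedge_i \psi(z_i, \underline{y}_i) \wedge \bigwedge_i \psi(z_i + t, \underline{y}'_i) \wedge \psi(w, \underline{y}'')\bigg).
\]
The $(\Rightarrow)$ direction is immediate: if $K \models \varphi$, take $\{z_1,\dots,z_q\} = k = \mathbb{F}_q$ (the full zero set of $z^q - z$ in $K$) and $w = t^{-1}$; the witnesses for $\psi$ come from $\varphi$.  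For the converse, note that the constraints force $\{z_1,\dots,z_q\} = k$ and $w = t^{-1}$.  Given any $x \in K$, distinguish three cases: (i) $x \in k$, so $x = z_i$ for some $i$ and $\underline{y}_i$ works; (ii) $v_t(x) \geq 0$ and $x \notin k$, so $x = a + u$ with $a = \mathrm{res}(x) \in k$ and $u \in \mathfrak{m}_{v_t}\setminus\{0\}$, and Lemma~\ref{lem:AutLaurent} supplies $\sigma \in \mathrm{End}(K/k)$ with $\sigma(t) = u$, so $\sigma(a+t) = x$; (iii) $v_t(x) < 0$, so $1/x \in \mathfrak{m}_{v_t}\setminus\{0\}$, and Lemma~\ref{lem:AutLaurent} supplies $\sigma$ with $\sigma(t) = 1/x$, so $\sigma(w) = \sigma(1/t) = x$.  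Applying $\sigma$ to the given witnesses $\underline{y}'_i$ (with $z_i = a$) or $\underline{y}''$ yields witnesses for $\psi(x, \cdot)$.

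The main point to verify is that each such $\sigma$ is indeed an embedding of $\mathfrak{L}_{\mathrm{val}}$-structures $K \to K$, so that quantifier-free $\mathfrak{L}$-formulas transfer in both directions: on $\mathbf{VF}$ it is an injective field homomorphism fixing $k$; on $\mathbf{\Gamma}$ it acts as multiplication by the positive integer $v_t(\sigma(t))$, which is an order-preserving embedding of ordered abelian groups; and on $\mathbf{k}$ it is the identity.  This is the only delicate step; the sort reduction and the $(\Rightarrow)$ direction are routine.  Uniformity and uniform computability of $\tau_\mathfrak{L}$ in $\mathfrak{L}$ and $q$ follow from the corresponding properties of $\mathrm{prnx}_{\exists,\mathfrak{L}}$ (Remark~\ref{rem:prnx}) together with the explicit construction of $\tau_\mathfrak{L}\varphi$ from $\psi$, $q$, and the symbols of $\mathfrak{L}$.
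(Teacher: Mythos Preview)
Your proposal is correct and follows essentially the same approach as the paper's proof: both build $\tau_\mathfrak{L}\varphi$ by asserting $\psi$ at the $q$ elements of $k$, at the points $z_i+t$, and at $t^{-1}$, then use Lemma~\ref{lem:AutLaurent} (endomorphisms of $k(\!(t)\!)/k$ with $\sigma(t)\in\mathfrak{m}_{v_t}\setminus\{0\}$) to propagate $\psi$ to every element via the same three-case split on $v_t(x)$.

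The differences are cosmetic. You take $\psi$ quantifier-free and carry explicit witness tuples $\underline{y}_i,\underline{y}'_i,\underline{y}''$, whereas the paper leaves $\psi$ existential and invokes preservation of existential formulas under the embedding $\sigma(K)\hookrightarrow K$; these are equivalent packagings. You also spell out two points the paper leaves implicit: the reduction of a universal quantifier over $\mathbf{k}$ or $\mathbf{\Gamma}$ to one over $\mathbf{VF}$ (which is harmless here since $|k|=q$ and every value is $v(a)$ for some $a$, though note this step uses $v$ and $\mathrm{res}$, so strictly needs $\mathfrak{L}$ to contain these whenever it contains the corresponding sort), and the verification that $\sigma$ is an $\mathfrak{L}_{\rm val}$-embedding on all three sorts (identity on $\mathbf{k}$, multiplication by $v_t(\sigma(t))$ on $\mathbf{\Gamma}$). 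Neither addition changes the argument's substance.
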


\begin{proof}
It suffices to consider $\varphi\in{\rm Sent}_{\forall_1[\exists]}(\mathfrak{L})$,
i.e.~$\varphi=\forall x\psi(x)$ with $\psi\in{\rm Form}_\exists(\mathfrak{L})$. 
Let
$$
 \eta_q(z_1,\dots,z_q) \;:=\; \bigwedge_{j=1}^q z_j^q=z_j\wedge\bigwedge_{i\neq j}z_i\neq z_j 
$$
and
$$
 \tau_\mathfrak{L}\varphi \;:=\; \exists y ,z_1\dots,z_q
 \bigg(\eta_q(\underline{z})\wedge yt=1\wedge\psi(y)\wedge\bigwedge_{j=1}^q\psi(z_j+t)\wedge\bigwedge_{j=1}^q\psi(z_j)\bigg).
$$
Then $\tau_\mathfrak{L}\varphi\in{\rm Sent}_\exists(\mathfrak{L}(t))$.
If $K\models\varphi$, then 
$K\models\psi(a)$ for every $a\in k(\!(t)\!)$,
and setting $y:=t^{-1}$ and letting $z_1,\dots,z_q$ be an enumeration of $k$
we have $K\models\eta_q(\underline{z})\wedge yt=1$,
so $K\models\tau_\mathfrak{L}\varphi$.
Conversely, if $K\models\tau_\mathfrak{L}\varphi$,
we obtain $\underline{b}\in k(\!(t)\!)^q$ with $k(\!(t)\!)\models\eta_q(\underline{b})$,
so $\underline{b}$ is an enumeration of $k$, 
and $\psi$ holds in $K$ for each of the elements of $k$,
for $t^{-1}$, and for all elements of the form $b+t$ with $b\in k$.
We claim that then in fact it holds for every $a\in k(\!(t)\!)$.
Indeed, if $a\notin k$ and $v_t(a)\geq 0$,
let $b\in k$ with $v_t(a-b)>0$.
By Lemma \ref{lem:AutLaurent} there exists $\sigma\in{\rm End}(K)$ with 
$\sigma|_k={\rm id}_k$ and
$\sigma(t)=a-b$,
so $K\models\psi(b+t)$ implies
$\sigma(K)\models\psi(b+(a-b))$,
which since $\psi\in{\rm Form}_\exists(\mathfrak{L})$ implies
that $K\models\psi(a)$.
And if $v_t(a)<0$,
by Lemma \ref{lem:AutLaurent} there exists $\sigma\in{\rm End}(K)$ with 
$\sigma|_k={\rm id}_k$ and
$\sigma(t)=a^{-1}$,
so, arguing like above,
$K\models\psi(t^{-1})$ implies that
$K\models\psi(a)$.
\end{proof}

\begin{proposition}\label{prop:Tred_finite}
Let $q$ be a prime power.
Let $\mathsf{F}$ 
be a fragment
such that ${\rm Form}_{\mathsf{F}}(\mathfrak{L})$
contains all quantifier-free formulas in the empty language on the field sort,
for every language $\mathfrak{L}\supseteq\mathfrak{L}_{\rm val}$.
For every such $\mathfrak{L}$
there is a map
$$
 \tau_\mathfrak{L}\colon{\rm Form}_{\forall^{\mathbf{k}}\mathsf{F}}(\mathfrak{L})\rightarrow{\rm Form}_{\exists^{\mathbf{k}}[\mathsf{F}]}(\mathfrak{L})
$$
such that for 
every $\mathfrak{L}$-structure $K'=(K,v,\dots)$ with $|Kv|=q$
and every $\varphi\in{\rm Form}_{\forall^{\mathbf{k}}\mathsf{F}}(\mathfrak{L})$ we have
${\rm Var}(\tau\varphi)={\rm Var}(\varphi)$
and $K'\models(\varphi\leftrightarrow\tau_\mathfrak{L}\varphi)$.
This $\tau_\mathfrak{L}$ is uniform in such $\mathfrak{L}$,
and if $\mathsf{F}$ is computable, 
then $\tau$ is computable uniformly in $\mathfrak{L}$ and $q$.
\end{proposition}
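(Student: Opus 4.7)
The plan is to mimic the device already used in Proposition~\ref{prop:A1Etot} and in Section~\ref{sec:coding}: since $|Kv|=q$ is finite and the elements of the unique field of order $q$ are exactly the roots of $Z^q-Z$, each universal quantifier over the residue-field sort is equivalent to a conjunction of $q$ instances, which we can package into a single block of $\exists^{\mathbf{k}}$-quantifiers enumerating $Kv$.

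Concretely, given $\varphi(\underline{u})\in\Form_{\forall^{\mathbf{k}}\mathsf{F}}(\mathfrak{L})$, I would first apply $\mathrm{prnx}_{\mathsf{F},\mathfrak{L}}$ from Remark~\ref{rem:prnx} to put $\varphi$ in the form
$\forall^{\mathbf{k}} y_1,\dots,y_r\,\psi(\underline{u},\underline{y})$
with $\psi\in\Form_{\mathsf{F}}(\mathfrak{L})$ and each $y_i$ a residue-field-sort variable. Set
$$
 \eta_q(z_1,\dots,z_q)\;:=\;\bigwedge_{j=1}^qz_j^q=z_j\wedge\bigwedge_{i\neq j}z_i\neq z_j,
$$
a quantifier-free formula on the residue-field sort which, by the hypothesis on $\mathsf{F}$, lies in $\Form_\mathsf{F}(\mathfrak{L})$ (its atomic subformulas use no symbol on the field sort). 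Then define
$$
 \tau_\mathfrak{L}\varphi\;:=\;\exists^{\mathbf{k}} z_1,\dots,z_q\bigg(\eta_q(\underline{z})\wedge\bigwedge_{(j_1,\dots,j_r)\in\{1,\dots,q\}^r}\psi(\underline{u},z_{j_1},\dots,z_{j_r})\bigg).
$$
Closure of $\mathsf{F}$ under conjunction and free-variable substitution makes the bracketed formula an element of $\Form_\mathsf{F}(\mathfrak{L})$, so $\tau_\mathfrak{L}\varphi\in\Form_{\exists^{\mathbf{k}}[\mathsf{F}]}(\mathfrak{L})$, visibly with $\mathrm{Var}(\tau_\mathfrak{L}\varphi)=\mathrm{Var}(\varphi)$.

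For correctness, if $|Kv|=q$ then the $Kv$-solutions of $Z^q=Z$ form all of $Kv$, so $K'\models\eta_q(\underline{b})$ iff $(b_1,\dots,b_q)$ enumerates $Kv$; hence $K'\models\tau_\mathfrak{L}\varphi(\underline{a})$ is equivalent to $\psi(\underline{a},\underline{b})$ holding for every $\underline{b}\in(Kv)^r$, which is exactly $K'\models\varphi(\underline{a})$. Uniformity in $\mathfrak{L}$ and uniform computability in $(\mathfrak{L},q)$ both reduce to the corresponding properties of $\mathrm{prnx}_{\mathsf{F},\mathfrak{L}}$ (Remark~\ref{rem:prnx}) together with the mechanical assembly of the conjunction indexed by $\{1,\dots,q\}^r$. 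The one point that needs care is verifying that $\eta_q\in\Form_\mathsf{F}(\mathfrak{L})$: I am reading the hypothesis ``quantifier-free formulas in the empty language on the field sort'' as allowing the usual ring-language symbols on the residue-field sort while imposing only equality on the field sort, which is exactly the role that analogous hypotheses on $\mathsf{F}$ played in Propositions~\ref{prop:coding_param} and~\ref{prop:coding_noparam}; any other reasonable parsing would require a correspondingly mild strengthening of the hypothesis but would not alter the construction.
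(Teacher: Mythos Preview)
Your argument is essentially the paper's own proof: apply $\mathrm{prnx}_{\mathsf{F},\mathfrak{L}}$ to get $\forall^{\mathbf{k}}y_1,\dots,y_r\,\psi$, then replace the universal block by $\exists^{\mathbf{k}}z_1,\dots,z_q$ quantifying over an enumeration of $Kv$ and conjoin all $q^r$ instances of $\psi$. The one superfluous ingredient in your version is the conjunct $\bigwedge_j z_j^q=z_j$: since $|Kv|=q$, the pairwise inequalities $\bigwedge_{i<j}z_i\neq z_j$ alone already force $\{z_1,\dots,z_q\}=Kv$, so the polynomial condition is redundant. Dropping it also dissolves the worry you raise at the end, because $z_i\neq z_j$ is a quantifier-free formula in the empty language, and this is exactly what the hypothesis on $\mathsf{F}$ provides---no parsing ambiguity and no strengthening of the hypothesis is needed.
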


\begin{proof}
Let $\varphi(\underline{u})\in\Form_{\forall^{\mathbf{k}}\mathsf{F}}(\mathfrak{L})$.
Then $\varphi$ is equivalent to
${\rm prnx}_{\mathsf{F},\mathfrak{L}}(\varphi)=\forall^{\mathbf{k}} x_{1},\dots,x_{r}\psi(\underline{x},\underline{u})$
for some $r$,
with $\psi(\underline{x},\underline{u})\in\Form_{\mathsf{F}}(\mathfrak{L})$,
see Remark \ref{rem:prnx}.
Let
$$
 \tau_\mathfrak{L}\varphi \;:=\;
 \exists^{\mathbf{k}} z_1,\dots,z_q
\Bigg(\bigwedge_{1\leq i<j\leq q} z_i\neq z_j\wedge\bigwedge_{j_1=1}^q\dots\bigwedge_{j_r=1}^q \psi(z_{j_1},\dots,z_{j_r},\underline{u})\Bigg).
$$
\end{proof}

We now summarise some of our results,
first by returning to
Theorem \ref{thm:intro_Laurent} from the introduction.

\begin{proof}[Proof of Theorem \ref{thm:intro_Laurent}]
Part $(a)$ is a special case of Proposition \ref{prof:AE_char0}$(b)$ 
and Corollary \ref{cor:reductions}$(b)$.
Part $(b)$ is Proposition \ref{prop:AE_Hepi}
and Corollary \ref{cor:reductions}$(a)$.
Part $(c)$ is a special case of
Proposition \ref{prop:HediscA1E}
and Corollary \ref{cor:HediscA1E}.
Part $(d)$ follows from Proposition \ref{lem:AnE},
and part $(e)$ follows from Proposition \ref{prop:A1Etot}.
Part $(f)$ follows from the case $n=1$
of 
Proposition \ref{prop:coding_param} and
Proposition \ref{prop:coding_noparam}.
\end{proof}

\begin{remark}
We summarise some consequences
for Laurent series fields in Figure \ref{diag:Tred} on p.~\pageref{diag:Tred}.
Each arrow represents a many-one reduction.
Solid arrows without label represent trivial reduction, where the theory at the source is a computable fragment of the theory at the target.
Solid arrows with label indicate the result that justifies this reduction.
Dashed arrows are reductions in special cases, where the case is included as a label.
Dotted arrows are reductions conditional on \Rfour.
Nodes of the same colour are many-one equivalent in the special case where $k$ is finite.
\end{remark}

\begin{remark}
In this work we did not address the question of the many-one equivalence of
${\rm Th}_\exists(k(\!(t)\!),v_t)$ and
${\rm Th}_\exists(k(\!(t)\!))$, 
or similarly of
${\rm Th}_{\forall\exists}(k(\!(t)\!),v_t)$ and
${\rm Th}_{\forall\exists}(k(\!(t)\!))$. 
Here it is crucial whether the valuation ring
$k[\![t]\!]$ is $\emptyset$-definable in the $\mathfrak{L}_{\rm ring}$-structure $k(\!(t)\!)$
by existential and universal,
respectively by universal-existential and existential-universal formulas.
For work on such problems we refer the reader to
\cite{AF17} and \cite[\S3]{FehmJahnke}.
\end{remark}

We end this work with one negative result.
For perfect $k$ of positive characteristic,
combining Propositions \ref{prop:coding_param} and \ref{lem:AnE}
we obtain a rather simple
many-one reduction of
${\rm Th}_{\forall\exists}(k(\!(t)\!),v_t,t)$
to 
${\rm Th}_{\forall_2\exists}(k(\!(t)\!),v_t)$.
As the intermediate reduction from 
${\rm Th}_{\forall\exists}(k(\!(t)\!),v_t,t)$
to
${\rm Th}_{\forall_1\exists}(k(\!(t)\!),v_t,t)$
uses the existence of an $\exists$-$\mathfrak{L}_{\rm val}(t)$-definable
surjection 
$k(\!(t)\!)\rightarrow k(\!(t)\!)\times k(\!(t)\!)$,
the following suggests that
there is no similarly simple many-one reduction
to ${\rm Th}_{\forall_1\exists}(k(\!(t)\!),v_t)$.

\begin{proposition}
    For a perfect field $k$, there is no 
    $\exists$-$\mathfrak{L}_{\rm val}(k)$-definable surjection
    $k(\!(t)\!)\rightarrow k(\!(t)\!)\times k(\!(t)\!)$.
\end{proposition}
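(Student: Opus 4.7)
The plan is to argue by contradiction via a transcendence-degree count. Suppose $\varphi(x, y_1, y_2)$ is an $\exists$-$\mathfrak{L}_{\rm val}(k)$-formula whose solution set is the graph of a surjective function $f \colon K \to K \times K$, where $K = k(\!(t)\!)$.

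First I would record the elementary fact that $K$ has transcendence degree at least $2$ over $k$. Indeed $t$ is transcendental over $k$, and since $k$ is perfect the field-theoretic algebraic closure of $k(t)$ inside $K$ coincides with the henselization $k(t)^h$ (there are no purely inseparable algebraic elements over $k(t)$ in $K$, because $t$ is a $p$-basis of $K$ over $K^p$ in positive characteristic); as $k(t)^h$ is a proper subfield of the completion $K$, any $s \in K \setminus k(t)^h$ is transcendental over $k(t)$, so $(t, s)$ is algebraically independent over $k$. By surjectivity of $f$, pick $x_0 \in K$ with $f(x_0) = (t, s)$.

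The crux is then the claim that both coordinates of $f(x_0)$ are field-algebraic over $k(x_0)$. Because $\varphi$ is the graph of a function, the $\mathfrak{L}_{\rm val}(k \cup \{x_0\})$-formula $\exists y_2\, \varphi(x_0, w, y_2)$ has a unique solution $w = t$ in $K$, and this uniqueness is a first-order property, hence persists in every elementary extension $K^{*} \succcurlyeq K$. Thus $t \in \mathrm{dcl}_{\mathfrak{L}_{\rm val}}(k \cup \{x_0\})$ and symmetrically $s \in \mathrm{dcl}_{\mathfrak{L}_{\rm val}}(k \cup \{x_0\})$. The key input I would invoke is that, for $K = k(\!(t)\!)$ with $k$ perfect, model-theoretic definable closure in $\mathfrak{L}_{\rm val}$ is contained in the field-theoretic algebraic closure: $\mathrm{dcl}_{\mathfrak{L}_{\rm val}}(k \cup \{x_0\}) \subseteq k(x_0)^{\mathrm{alg}}$. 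Granting this, $t, s \in k(x_0)^{\mathrm{alg}}$, whence $\mathrm{trdeg}_k(k(t, s)) \leq \mathrm{trdeg}_k(k(x_0)) \leq 1$, contradicting algebraic independence of $t$ and $s$ over $k$.

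The main obstacle is justifying this dcl-containment, and this is precisely where perfectness of $k$ is used: it excludes closure phenomena tied to inseparability. The hands-on route is to show that any $z \in K^{*}$ transcendental over $k(x_0)$ in a sufficiently saturated elementary extension $K^{*} \succcurlyeq K$ admits infinitely many $\mathfrak{L}_{\rm val}$-conjugates over $k \cup \{x_0\}$ in some further extension. Such conjugates can be built using the substitution-type endomorphisms of Lemma \ref{lem:AutLaurent} together with standard extension theorems for valued fields, producing many $\mathfrak{L}_{\rm val}$-embeddings of the valued subfield $k(x_0)(z)$ into a saturated model and thereby forcing $z \notin \mathrm{acl}_{\mathfrak{L}_{\rm val}}(k \cup \{x_0\})$. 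Making this uniform across equicharacteristics zero and $p$ is the principal technical work; equivalently, it is a tameness statement about model-theoretic closure in equicharacteristic henselian valued fields with perfect residue field which one could alternatively quote from the literature.
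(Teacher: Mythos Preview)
Your argument has a genuine gap at the step you yourself flag as ``the principal technical work'': the containment $\mathrm{dcl}_{\mathfrak{L}_{\rm val}}(k\cup\{x_0\})\subseteq k(x_0)^{\mathrm{alg}}$ is neither available in the literature for $k(\!(t)\!)$ in positive characteristic (where the full theory is open), nor does your sketch establish it. Producing many $\mathfrak{L}_{\rm val}$-embeddings of the valued subfield $k(x_0)(z)$ into a saturated model only yields many realizations of the \emph{quantifier-free} type of $z$ over $k(x_0)$; to get $z\notin\mathrm{acl}(k\cup\{x_0\})$ you would need realizations of the full type, i.e.\ automorphisms of the saturated model over $k(x_0)$ moving $z$. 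The endomorphisms of Lemma~\ref{lem:AutLaurent} preserve only existential formulas (upward), so they cannot be leveraged into statements about general definable closure. In effect, by passing immediately to $\mathrm{dcl}$ you throw away the one hypothesis---that $\varphi$ is existential---that makes the problem tractable without understanding $\mathrm{Th}(k(\!(t)\!))$.

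The paper's argument is organized differently and crucially retains the existential hypothesis throughout. It does \emph{not} show $t,s\in k(x_0)^{\mathrm{alg}}$; instead it shows that whenever $f(x)=(t,s)$ one has $s\in k(t)^h$, so the fibre over any pair $(t,s)$ with $s\notin k(t)^h$ is empty. The mechanism is: (i) a structural lemma (using perfectness of $k$) that every nonzero element of $\mathfrak{m}_{v_t}$ is a polynomial over $k$ in some uniformizer $u$, so after a case split one may assume $x\in k[u]$; (ii) an automorphism $\sigma\in\mathrm{Aut}(K/k)$ with $\sigma(u)=t$, which sends the parameter $x$ into $k(t)$; (iii) the existential closedness $(k(t)^h,v_t)\preccurlyeq_\exists(K,v_t)$, which---because $\varphi$ is existential and $\sigma(x)\in k(t)^h$---forces a witness $(y_1,y_2)\in k(t)^h$, and then functionality of $\varphi$ in $K$ gives $(y_1,y_2)=(\sigma(t),\sigma(s))$. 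Step~(iii) is exactly where existentiality does the work that your $\mathrm{dcl}$ claim would have to replace.
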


\begin{proof}
  Let $K=k(\!(t)\!)$ and suppose $f\colon K\rightarrow K\times K$
  is a map defined by 
  $\varphi(x,y_1,y_2)\in{\rm Form}_\exists(\mathfrak{L}_{\rm val}(k))$.
  We claim that if $f(x)=(t,s)$, then $s\in k(t)^h$ (the henselization with respect to $v_t$),
  which since $k(t)^h\subsetneqq K$ shows that $f$ is not surjective.

  First, suppose that $x\in k$.
  Then as $\varphi$ is an $\mathfrak{L}_{\rm val}(k)$-formula,
  applying 
  the $\sigma\in{\rm End}(K/k)$ given by $\sigma(t)=t^2$ (Lemma \ref{lem:AutLaurent})
  gives the contradiction
  $(t,s)=f(x)=f(\sigma(x))=\sigma(f(x))=(t^2,\sigma(s))$.

  Next, suppose that $x\notin k$ and $v_t(x)\geq 0$.
  Then $x=a+z$ with $a\in k$ and $v_t(z)>0$, $z\neq 0$. 
  As $k$ is perfect, \cite[Lemma 2.1]{Anscombe_onedim} gives $g\in k[X]$ and $u\in K$ with $v_t(u)=1$ such that $z=g(u)$.
  In particular, $z$  is definable
  by a formula in ${\rm Form}_\exists(\mathfrak{L}_{\rm val}(k(u))$,
  and therefore so are $t$ and $s$
  (as $\varphi\in{\rm Form}_\exists(\mathfrak{L}_{\rm val}(k))$ and $a\in k$).
  Let $\sigma\in{\rm Aut}(K/k)$ be the automorphism given by $\sigma(u)=t$ (Lemma \ref{lem:AutLaurent}).
  Then $\sigma(t)$ and $\sigma(s)$ are 
  definable in $K$ by a formula in ${\rm Form}_\exists(\mathfrak{L}_{\rm val}(k(t)))$,
  which since $(k(t)^h,v_t)\preccurlyeq_\exists (K,v_t)$ 
  (see again e.g.~\cite[Lemma 4.5]{AF16})
  implies that 
  $\sigma(t),\sigma(s) \in k(t)^h$.
  In particular, 
  $\sigma(t)$ and $\sigma(s)$
  are algebraically dependent over $k$,
  and then so are $t$ and $s$.

  Finally, suppose that $v_t(x)<0$.
  Since with $f$ also the map $x\mapsto f(x^{-1})$ (with $0^{-1}:=0$) is definable
  by a formula in
  ${\rm Form}_\exists(\mathfrak{L}_{\rm val}(k))$,
  the same argument as for the previous case works here.
\end{proof}

\new{
\appendix
\section{Computational aspects}
\label{sec:appendix}

\noindent 
In this short appendix
we discuss the notions of computability of (presentations of) languages,
sets of formulas, and functors of formulas.
}

A {\em presentation} of a (necessarily countable) language $\mathfrak{L}$
is an injection $\alpha$ mapping the symbols of $\mathfrak{L}$ to $\mathbb{N}$.
A standard Gödel coding using $\alpha$ produces an injection
$\bar{\alpha}\colon\Form(\mathfrak{L})\rightarrow\mathbb{N}$.
We denote by $\mathfrak{L}^\alpha$
the following collection of functions on $\mathbb{N}$:
the indicator function of the image of the set of function symbols, the set of relation symbols and the set of constant symbols, respectively,
and the function induced by the arity function.
We say that the presentation $\alpha$ is {\em computable}
if each of the functions in $\mathfrak{L}^{\alpha}$ is.

If $\alpha$ is a computable presentation of $\mathfrak{L}$,
a set of formulas $F\subseteq{\rm Form}(\mathfrak{L})$ is {\em decidable} if so is $\bar{\alpha}(F)$.
If we call a map $F_1\rightarrow F_2$ between
sets of formulas $F_1\subseteq{\rm Form}(\mathfrak{L}_1)$, $F_2\subseteq{\rm Form}(\mathfrak{L}_2)$
for languages $\mathfrak{L}_i$ with computable presentation $\alpha_i$
{\em computable}, we mean that the induced function $\bar{\alpha}_1(F_1)\rightarrow\bar{\alpha}_2(F_2)$ is computable,
and similarly when we speak of 
{\em many-one reductions} and {\em many-one equivalences}.
For definitions of these terms, see for example
\cite[\S11]{Post} and
\cite[Definition 1.6.8]{Soare}.

For the definition of when a functor of formulas (in the sense of Definition \ref{def:functors}) is computable, we follow the philosophy of \cite[\S3]{Miller}.
Namely, $\mathsf{F}(\mathfrak{L})$ usually has a chance to be computable only if the presentation $\alpha$ of $\mathfrak{L}$ is,
and even then the computability of $\mathsf{F}(\mathfrak{L})$ might depend on the specific presentation.
It therefore makes sense to ask for computability {\em relative} to the presentation:
There are Turing functionals%
\footnote{For the definition of {\em Turing functional} see \cite[Chapter 3.2.2]{Soare}.}
$\Phi$ and $\Psi$ such that
if $\mathfrak{L}$ has presentation $\alpha$,
then $\Phi^{\mathfrak{L}^{\alpha}}$ decides $\bar{\alpha}({\rm Form}(\mathfrak{L}))$,
and if $\mathfrak{L}_1\subseteq\mathfrak{L}_2$ have presentations $\alpha_1$ respectively $\alpha_2$,
and $g\colon{\alpha}_1(\mathfrak{L}_1)\rightarrow{\alpha}_2(\mathfrak{L}_2)$ is the induced injection,
$\Psi^{\mathfrak{L}_1^{\alpha_1}\oplus g\oplus\mathfrak{L}_2^{\alpha_2}}$ computes the induced function
$\bar\alpha_1({\rm Form}(\mathfrak{L}_1))\rightarrow\bar\alpha_2({\rm Form}(\mathfrak{L}_2))$.
A functor of formulas $\mathsf{F}$
with domain $\mathbf{L}$
is {\em computable}
if there exists a Turing functional $\Pi$ 
such that for every $\mathfrak{L}\in\mathbf{L}$ with presentation $\alpha$,
$\Pi^{\mathfrak{L}^\alpha}$ decides $\bar\alpha(\mathsf{F}(\mathfrak{L}))$.
Given functors of formulas $\mathsf{F}_1,\mathsf{F}_2$ with domain $\mathbf{L}$,
we say that a family of maps
$\tau_\mathfrak{L}\colon\mathsf{F}_1(\mathfrak{L})\rightarrow\mathsf{F}_2(\mathfrak{L})$ is
{\em uniformly computable} (in $\mathfrak{L}$)
if there exists a Turing functional $\Theta$
such that
for every $\mathfrak{L}\in\mathbf{L}$
with presentation $\alpha$,
$\Theta^{\mathfrak{L}^{\alpha}}$ computes
the induced function
$\bar\alpha(\mathsf{F}_1(\mathfrak{L}))\rightarrow\bar\alpha(\mathsf{F}_2(\mathfrak{L}))$.
When we say that 
the $\tau_\mathfrak{L}$
are computable uniformly in $\mathfrak{L}$ and some other input data,
like some $n_1,\dots,n_r\in\mathbb{N}$,
we mean that $\Theta$ computes the corresponding map
$\bar\alpha(\mathsf{F}_1(\mathfrak{L}))\times\mathbb{N}^r\rightarrow\bar\alpha(\mathsf{F}_2(\mathfrak{L}))$.
Sometimes we will also use this for 
objects $x_i$ other than $n_i\in\mathbb{N}$,
like formulas in a fixed finite language, or polynomials over a presented field,
where we then mean a suitable coding $\bar\alpha(x_i)\in\mathbb{N}$,
whose precise form is not critical
for the question of computability.

\begin{remark}
If $\mathsf{F}$ is a computable functor of formulas, 
then for every $\mathfrak{L}$ (in the domain of $\mathsf{F}$) with computable presentation $\alpha$, the set of formulas
${\rm Form}_\mathsf{F}(\mathfrak{L})$ is decidable
(in the sense that $\bar\alpha({\rm Form}_\mathsf{F}(\mathfrak{L}))$ is decidable). 
Similarly, if $\mathsf{F}_1$ and $\mathsf{F}_2$ are computable functors of formulas
and $\tau_\mathfrak{L}\colon{\rm Form}_{\mathsf{F}_1}(\mathfrak{L})\rightarrow{\rm Form}_{\mathsf{F}_2}(\mathfrak{L})$
is a uniformly computable family of maps,
then for every $\mathfrak{L}$ with computable presentation $\alpha$,
the map of formulas
$\tau_\mathfrak{L}\colon{\rm Form}_{\mathsf{F}_1}(\mathfrak{L})\rightarrow{\rm Form}_{\mathsf{F}_2}(\mathfrak{L})$ is computable
(in the sense that
$\bar\alpha({\rm Form}_{\mathsf{F}_1}(\mathfrak{L}))\rightarrow\bar\alpha({\rm Form}_{\mathsf{F}_2}(\mathfrak{L}))$ is computable).
In that case, for every  $T\subseteq {\rm Form}_{\mathsf{F}_2}(\mathfrak{L})$ we obtain a many-one reduction of
$\tau_\mathfrak{L}^{-1}(T)$ to $T$.
Such families $\tau_{\mathfrak{L}}$ occur in particular in
Propositions~\ref{prop:coding_param}, \ref{prop:coding_noparam}, \ref{prop:A1Etot}, and \ref{prop:Tred_finite}.
\end{remark}

\section*{Acknowledgements}

\noindent
The authors would like to thank Philip Dittmann for helpful comments on a preliminary version,
Russell Miller for references regarding the computability theory,
\new{and a referee for helpful suggestions regarding the presentation}.
{\color{blue}They would also like to thank Bjorn Poonen for helpful discussion regarding Proposition \ref{prop:Tred_ff_2}.}

Part of this is based upon work supported by the National Science Foundation under Grant No. DMS-1928930 while the authors participated in a program hosted by the Mathematical Sciences Research Institute in Berkeley, California, during Summer 2022.
S.~A.\ was supported by GeoMod AAPG2019 (ANR-DFG).
S.~A.\ and A.~F.\ were supported by the Institut Henri Poincaré.
A.~F.~was funded by the Deutsche Forschungsgemeinschaft (DFG) - 404427454.
{\color{blue} The simplified proof of Proposition \ref{prop:Tred_ff_2} was worked out while the authors were participating in the trimester on ``Definability, decidability, and computability'' at the Hausdorff Institute Bonn, funded by the Deutsche Forschungsgemeinschaft (DFG, German Research Foundation) under Germany‘s Excellence Strategy – EXC-2047/1 – 390685813.}

\def\bibfont{\footnotesize}
\bibliographystyle{plain}

\addresseshere

\begin{landscape}
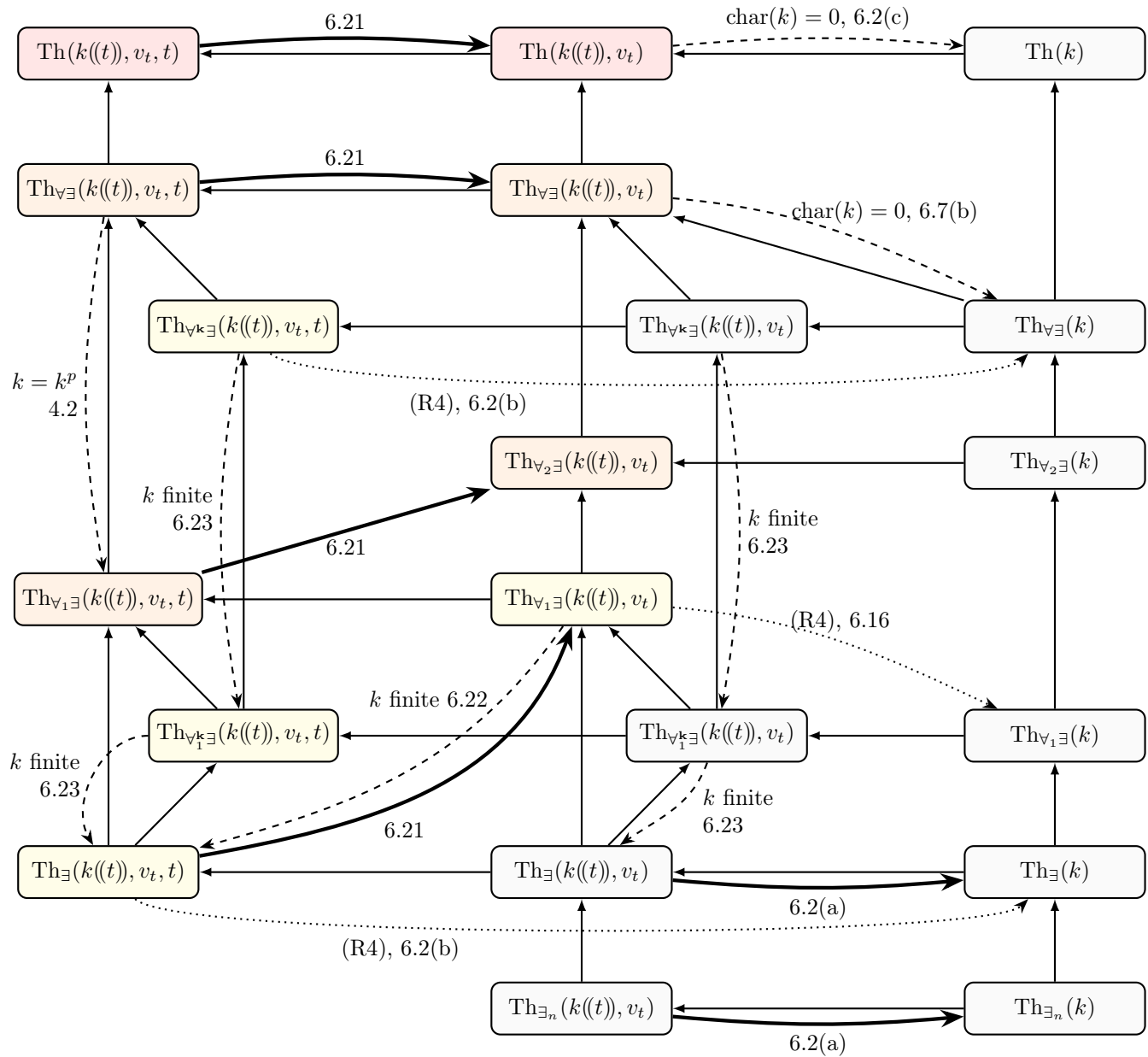
\begin{figure}[ht]
\centering

\begin{tikzpicture}[auto,scale=2.10,thick,>=latex,block/.style={draw, fill=white, rectangle, minimum height=3em, minimum width=6em},
greybox/.style={rectangle, rounded corners, minimum width=2.8cm, minimum height=0.8cm, text centered, draw=black, fill=black!02},
pinkbox/.style={rectangle, rounded corners, minimum width=2.8cm, minimum height=0.8cm, text centered, draw=black, fill=red!10},
bluebox/.style={rectangle, rounded corners, minimum width=2.8cm, minimum height=0.8cm, text centered, draw=black, fill=orange!10},
orangebox/.style={rectangle, rounded corners, minimum width=2.8cm, minimum height=0.8cm, text centered, draw=black, fill=yellow!10},
thmarrow/.style={ultra thick,-{Stealth}},
hyparrow/.style={dashed,thick,-{Stealth}},
R4arrow/.style={dotted,thick,-{Stealth}},
myarrow/.style={thick,-{Stealth}},
]

\coordinate (0) at (0,0); 
\coordinate (1) at (-3.5,0); 
\coordinate (1a) at (-2.5,0); 
\coordinate (2) at (-7.0,0); 
\coordinate (2a) at (-6.0,0); 

\coordinate (f1) at ($(0)+(0,0)$);
\coordinate (f2) at ($(0)+(0,1)$);
\coordinate (f3) at ($(0)+(0,2)$);
\coordinate (f4) at ($(0)+(0,3)$);
\coordinate (f5) at ($(0)+(0,4)$);
\coordinate (f6) at ($(0)+(0,5)$); 
\coordinate (f7) at ($(0)+(0,6)$); 
\coordinate (f8) at ($(0)+(0,7)$); 

\coordinate (vf1) at ($(1)+(0,0)$);
\coordinate (vf2) at ($(1)+(0,1)$);
\coordinate (vf3) at ($(1)+(0,2)$);
\coordinate (vf4) at ($(1)+(0,3)$);
\coordinate (vf5) at ($(1)+(0,4)$);
\coordinate (vf6) at ($(1)+(0,5)$); 
\coordinate (vf7) at ($(1)+(0,6)$); 
\coordinate (vf8) at ($(1)+(0,7)$); 
\coordinate (vfa1) at ($(1a)+(0,0)$);
\coordinate (vfa2) at ($(1a)+(0,1)$);
\coordinate (vfa3) at ($(1a)+(0,2)$);
\coordinate (vfa4) at ($(1a)+(0,3)$);
\coordinate (vfa5) at ($(1a)+(0,4)$);
\coordinate (vfa6) at ($(1a)+(0,5)$); 
\coordinate (vfa7) at ($(1a)+(0,6)$); 
\coordinate (vfa8) at ($(1a)+(0,7)$); 

\coordinate (vfpi1) at ($(2)+(0,0)$);
\coordinate (vfpi2) at ($(2)+(0,1)$);
\coordinate (vfpi3) at ($(2)+(0,2)$);
\coordinate (vfpi4) at ($(2)+(0,3)$);
\coordinate (vfpi5) at ($(2)+(0,4)$);
\coordinate (vfpi6) at ($(2)+(0,5)$); 
\coordinate (vfpi7) at ($(2)+(0,6)$); 
\coordinate (vfpi8) at ($(2)+(0,7)$); 
\coordinate (vfpia1) at ($(2a)+(0,0)$);
\coordinate (vfpia2) at ($(2a)+(0,1)$);
\coordinate (vfpia3) at ($(2a)+(0,2)$);
\coordinate (vfpia4) at ($(2a)+(0,3)$);
\coordinate (vfpia5) at ($(2a)+(0,4)$);
\coordinate (vfpia6) at ($(2a)+(0,5)$); 
\coordinate (vfpia7) at ($(2a)+(0,6)$); 
\coordinate (vfpia8) at ($(2a)+(0,7)$); 

\node[greybox] (F1) [] at (f1) {$\mathrm{Th}_{\exists_{n}}(k)$};
\node[greybox] (F2) [] at (f2) {$\mathrm{Th}_{\exists}(k)$};
\node[greybox] (F3) [] at (f3) {$\mathrm{Th}_{\forall_{1}\exists}(k)$};\node[greybox] (F5) [] at (f5) {$\mathrm{Th}_{\forall_{2}\exists}(k)$};
\node[greybox] (F6) [] at (f6) {$\mathrm{Th}_{\forall\exists}(k)$};
\node[greybox] (F8) [] at (f8) {$\mathrm{Th}_{}(k)$};

\node[greybox] (VF1) [] at (vf1) {$\mathrm{Th}_{\exists_{n}}(k(\!(t)\!),v_{t})$};
\node[greybox] (VF2) [] at (vf2) {$\mathrm{Th}_{\exists}(k(\!(t)\!),v_{t})$};
\node[greybox] (VF3) [] at (vfa3) {$\mathrm{Th}_{\forall_{1}^{\mathbf{k}}\exists}(k(\!(t)\!),v_{t})$};
\node[greybox] (VFa6) [] at (vfa6) {$\mathrm{Th}_{\forall^{\mathbf{k}}\exists}(k(\!(t)\!),v_{t})$};
\node[orangebox] (VFb4) [] at (vf4) {$\mathrm{Th}_{\forall_{1}\exists}(k(\!(t)\!),v_{t})$};
\node[bluebox] (VFb5) [] at (vf5) {$\mathrm{Th}_{\forall_{2}\exists}(k(\!(t)\!),v_{t})$};
\node[bluebox] (VF7) [] at (vf7) {$\mathrm{Th}_{\forall\exists}(k(\!(t)\!),v_{t})$};
\node[pinkbox] (VF8) [] at (vf8) {$\mathrm{Th}_{}(k(\!(t)\!),v_{t})$};

\node[orangebox] (VFPI2) [] at (vfpi2) {$\mathrm{Th}_{\exists}(k(\!(t)\!),v_{t},t)$};
\node[orangebox] (VFPI3) [] at (vfpia3) {$\mathrm{Th}_{\forall_{1}^{\mathbf{k}}\exists}(k(\!(t)\!),v_{t},t)$};
\node[orangebox] (VFPIa6) [] at (vfpia6) {$\mathrm{Th}_{\forall^{\mathbf{k}}\exists}(k(\!(t)\!),v_{t},t)$};
\node[bluebox] (VFPIb4) [] at (vfpi4) {$\mathrm{Th}_{\forall_{1}\exists}(k(\!(t)\!),v_{t},t)$};
\node[bluebox] (VFPI7) [] at (vfpi7) {$\mathrm{Th}_{\forall\exists}(k(\!(t)\!),v_{t},t)$};
\node[pinkbox] (VFPI8) [] at (vfpi8) {$\mathrm{Th}_{}(k(\!(t)\!),v_{t},t)$};

\draw[->] (F1) -- (F2);
\draw[->] (F2) -- (F3);
\draw[->] (F3) -- (F5);
\draw[->] (F5) -- (F6);
\draw[->] (F6) -- (F8);
\draw[->] (VF1) -- (VF2);
\draw[->] (VF2) -- (VF3);
\draw[->] (VF2) -- (VFb4);
\draw[->] (VF3) -- (VFa6);
\draw[->] (VF3) -- (VFb4);
\draw[->] (VFb4) -- (VFb5);
\draw[->] (VFb5) -- (VF7);
\draw[->] (VFa6) -- (VF7);
\draw[->] (VF7) -- (VF8);
\draw[->] (VFPI2) -- (VFPI3);
\draw[->] (VFPI2) -- (VFPIb4);
\draw[->] (VFPI3) -- (VFPIa6);
\draw[->] (VFPI3) -- (VFPIb4);
\draw[->] (VFPIa6) -- (VFPI7);
\draw[->] (VFPIb4) -- (VFPI7);
\draw[->] (VFPI7) -- (VFPI8);

\draw[->] (F1) -- (VF1);
\draw[->] (F2) -- (VF2);
\draw[->] (F3) -- (VF3);
\draw[->] (F5) -- (VFb5);
\draw[->] (F6) -- (VFa6);
\draw[->] (F6) -- (VF7);
\draw[->] (F8) -- (VF8);

\draw[->] (VF2) -- (VFPI2);
\draw[->] (VF3) -- (VFPI3);
\draw[->] (VFa6) -- (VFPIa6);
\draw[->] (VFb4) -- (VFPIb4);
\draw[->] (VF7) -- (VFPI7);
\draw[->] (VF8) -- (VFPI8);

\draw[thmarrow] (VF1) to[out=355,in=185] node[below] {\ref{prop:Ftt}(a)} (F1);
\draw[thmarrow] (VF2) to[out=355,in=185] node[below] {\ref{prop:Ftt}(a)} (F2);

\draw[R4arrow] (VFPI2) .. controls ($(vfpi2)+(0.5,-0.5)$) and ($(f2)+(-0.5,-0.5)$) .. node[below,shift={(-8em,0em)}] {\Rfour, \ref{prop:Ftt}(b)} (F2);
\draw[R4arrow] (VFPIa6) .. controls ($(vfpia6)+(0.5,-0.5)$) and ($(f6)+(-0.5,-0.5)$) .. node[below,shift={(-8em,0em)}] {\Rfour, \ref{prop:Ftt}(b)} (F6);

\draw[R4arrow] (VFb4)  to[out=355,in=155] node[above] {\Rfour, \ref{cor:HediscA1E}} (F3);

\draw[hyparrow] (VF7) to[out=355,in=155] node[above,shift={(2em,0em)}] {$\mathrm{char}(k)=0$, \ref{cor:reductions}(b)} (F6);
\draw[hyparrow] (VF8) to[out=005,in=175] node[above] {$\mathrm{char}(k)=0$, \ref{prop:Ftt}(c)} (F8);

\draw[hyparrow] (VFa6) to[out=280,in=080] node[right,align=left] {$k$ finite\\\ref{prop:Tred_finite}} (VF3);
\draw[hyparrow] (VF3) to[out=250,in=035] node[right,align=left,shift={(1em,0em)}] {$k$ finite\\\ref{prop:Tred_finite}} (VF2);

\draw[hyparrow] (VFPI3) to[out=180,in=120] node[left,align=right,shift={(-0.25em,0em)}] {$k$ finite\\\ref{prop:Tred_finite}} (VFPI2);
\draw[hyparrow] (VFPIa6) to[out=260,in=100] node[left,align=right,shift={(0em,1em)}] {$k$ finite\\\ref{prop:Tred_finite}} (VFPI3);
\draw[hyparrow] (VFPI7) to[out=260,in=100] node[left,align=right] {$k=k^{p}$\\\ref{prop:coding_param}} (VFPIb4);

\draw[thmarrow] (VFPIb4) to node[below] {\ref{lem:AnE}} (VFb5);
\draw[thmarrow] (VFPI7) to[out=005,in=175] node[above] {\ref{lem:AnE}} (VF7);
\draw[thmarrow] (VFPI8) to[out=005,in=175] node[above] {\ref{lem:AnE}} (VF8);

\draw[hyparrow] (VFb4) to[out=235,in=15] node[above,shift={(1.0em,2.5em)}] {$k$ finite \ref{prop:A1Etot}} (VFPI2); 
\draw[thmarrow] (VFPI2) to[out=10,in=250] node[below,shift={(-1em,-1em)}] {\ref{lem:AnE}} (VFb4);
\end{tikzpicture}

\caption{Many-one reductions between theories}
\label{diag:Tred}
\end{figure}
\end{landscape}

\end{document}